\title{Bernstein inequality and holonomic modules}
\author{Ivan Losev}
\address{I.L.: Department
of Mathematics, Northeastern University, Boston, MA 02115, USA}
\email{i.loseu@neu.edu}
\address{P.E.: Department of Mathematics, Massachusetts Institute of Technology,
Cambridge, MA 02139, USA}
\email{etingof@math.mit.edu}
\thanks{MSC 2010: 16W70}
\newcommand{\A}{\mathcal{A}}
\newcommand{\K}{\mathbb{C}}
\newcommand{\Z}{\mathbb{Z}}
\newcommand{\I}{\mathcal{I}}
\newcommand{\m}{\mathfrak{m}}
\newcommand{\HC}{\operatorname{HC}}
\newcommand{\VA}{\operatorname{V}}
\newcommand{\gr}{\operatorname{gr}}
\newcommand{\Ann}{\operatorname{Ann}}
\newcommand{\Weyl}{\mathbf{A}}
\newcommand{\Hom}{\operatorname{Hom}}
\newcommand{\B}{\mathcal{B}}
\newcommand{\g}{\mathfrak{g}}
\newcommand{\h}{\mathfrak{h}}
\newcommand{\C}{\mathbb{C}}
\newcommand{\Loc}{\operatorname{Loc}}
\newcommand{\param}{\mathfrak{p}}
\newcommand{\red}{/\!/\!/}
\newcommand{\quo}{/\!/}
\newcommand{\J}{\mathcal{J}}
\newcommand{\Leaf}{\mathcal{L}}
\newtheorem{Thm}{Theorem}[section]
\newtheorem{Prop}[Thm]{Proposition}
\newtheorem{Cor}[Thm]{Corollary}
\newtheorem{Lem}[Thm]{Lemma}
\theoremstyle{definition}
\newtheorem{Rem}[Thm]{Remark}
\begin{document}
\begin{abstract}
In this paper we study the representation theory of filtered algebras with
commutative associated graded whose spectrum has finitely many symplectic leaves. 
Examples are provided by the algebras of global sections of quantizations of symplectic
resolutions, quantum Hamiltonian reductions, spherical symplectic reflection algebras. 
We introduce the notion of holonomic modules for such algebras. We show that,
provided the algebraic fundamental groups of all leaves are finite, the 
generalized Bernstein inequality holds for the simple modules and turns into equality 
for holonomic simples. Under the same finiteness assumption, we prove that
the associated variety of a simple holonomic module is equi-dimensional.
We also prove that, if the regular bimodule has finite length,
then any holonomic module has finite length. This allows to reduce
the Bernstein inequality for arbitrary modules to simple ones.
We prove that the regular bimodule has finite length for the global
sections of quantizations of symplectic resolutions, for quantum Hamiltonian
reductions and for Rational Cherednik algebras.  The paper contains a joint appendix
by the author and Etingof that motivates the definition of a holonomic module 
in the case of global sections of a quantization of a symplectic resolution.
\end{abstract}
\maketitle
\section{Introduction}
\subsection{Holonomic modules}
Let $\A$ be an  infinite dimensional algebra. The study of all representations of $\A$ is a wild task even
if $\A$ itself is relatively easy, e.g., $\A$ is the algebra $D(\C^n)$ of algebraic linear differential
operators on $\C^{n}$ (a.k.a. the Weyl algebra of  the symplectic vector space $\C^{2n}$). In this case
(and in a more general case of the algebra $D(X)$ of differential operators on a smooth affine variety
$X$ or a sheaf $D_X$ of differential operators on a non-necessarily affine smooth variety) there is a nice
class of modules called {\it holonomic}. Namely, to a finitely generated $D_X$-module one can assign the so called
singular support that is a closed subvariety of $T^*X$. This subvariety is always coisotropic.
A $D_X$-module is called {\it holonomic} if its singular support is lagrangian. Many $D_X$-modules
appearing in ``nature'' are holonomic.

One question addressed in the present paper  is how to generalize the notion of a holonomic
module to a wider class of algebras. In the $D$-module situation it is easy to see
that any holonomic module has finite length. We will see that this is the case in
many other situations.

\subsection{Bernstein inequality}\label{SS_Bernstein_ineq}
Let $\g$ be a semisimple Lie algebra over an algebraically closed field of characteristic $0$.
We can form the universal enveloping algebra $U(\g)$. Let $\h$ be a Cartan subalgebra
of $\g$ and $W$ be the Weyl group. Then the center $Z$ of $U(\g)$ is identified
with $S(\h)^W$ via the so called Harish-Chandra isomorphism. So, for $\lambda\in \h^*$,
one can consider the central reduction $U_\lambda$ of $U(\g)$, i.e., the quotient
$U_\lambda:=U(\g)/U(\g)S(\h)^W_\lambda$, where $S(\h)^W_\lambda$ stands for the maximal ideal in
$S(\h)^W$ of all elements that vanish at $\lambda$. Then there is a remarkable property of
$U_\lambda$ known as the (generalized) Bernstein inequality: $$\operatorname{GK-}\dim(M)\geqslant
\frac{1}{2} \operatorname{GK-}\dim \left(U_\lambda/\operatorname{Ann}_{U_\lambda}(M)\right),$$
where $\operatorname{GK-}\dim$ stands for the Gelfand-Kirillov (shortly, GK) dimension and $\operatorname{Ann}_{U_\lambda}(M)$
denotes the annihilator of $M$ in $U_\lambda$, a two-sided ideal.

Another goal of this paper is to generalize the Bernstein inequality for a wider class of algebras.
We will also see  that the inequality becomes an equality for holonomic modules.

\subsection{Algebras of interest}\label{SS_alg}
Our base field is the field $\K$ of complex numbers. Let $\A=\bigcup_{i\geqslant 0}\A_{\leqslant i}$ be a $\Z_{\geqslant 0}$-filtered associative $\K$-algebra such that  the associated graded algebra $A:=\gr\A$ is commutative and finitely generated. Let $d$ be a positive integer such that $[\A_{\leqslant i},\A_{\leqslant j}]\subset \A_{\leqslant i+j-d}$. We get a natural degree $-d$ Poisson bracket on $A$. We assume that $X:=\operatorname{Spec}(A)$ has finitely many symplectic leaves, meaning that there is a finite stratification of $X$ by irreducible Poisson subvarieties that are symplectic. These subvarieties are called symplectic leaves.

A classical example is the algebras $U_\lambda$ introduced above. Namely, the algebra $U_\lambda$
inherits a filtration from $U(\g)$. The associated graded $\gr U_\lambda$ is the algebra of functions
on the nilpotent cone $\mathcal{N}\subset \g^*$. The symplectic leaves are the adjoint orbits in $\mathcal{N}$
and so there are finitely many.

Other important examples -- quantizations of symplectic resolutions, quantum Hamiltonian reductions
and Symplectic reflection algebras will be introduced in Section \ref{S_alg}.

\subsection{Main results}
Let $\A$ be as in Section \ref{SS_alg} (or, more generally, as described in Section \ref{SS_gen_alg} below).
Let $M$ be a finitely generated left $\A$-module and $\I\subset\A$ be its annihilator. To $M$ and $\A/\I$
we can assign their associated varieties $\VA(M), \VA(\A/\I)$. The former is defined as follows:
we equip $M$ with a {\it good} $\A$-module filtration (``good'' means that $\gr M$ is a finitely generated
$A$-module). The support in $X$ of the $A$-module $\gr M$ is independent of the choice of a good filtration,
we take this support for $\VA(M)$. To define $\VA(\A/\I)$ we view $\A/\I$ as a left $\A$-module. Note that
$\VA(\A/\I)$ is a Poisson subvariety in $X$ and hence is a union of symplectic leaves. In particular, the dimension
of $\VA(\A/\I)$ is even. Note also that $\VA(M)\subset \VA(\A/\I)$.

\begin{Thm}\label{Thm:main_ineq}
Let $\A$ be as above. Assume also that the algebraic fundamental group of every leaf in $X$
is finite. Then the following is true.
\begin{enumerate}
\item Suppose that $M$ is simple.  We have $2\dim \VA(M)\geqslant \dim \VA(\A/\I)$. Moreover, if $\mathcal{L}$
is a symplectic leaf such that $\VA(M)\cap \mathcal{L}\subset \VA(M)$ is open, then $\VA(\A/\I)=\overline{\mathcal{L}}$
and the intersection $\VA(M)\cap \mathcal{L}$ is a coisotropic subvariety in $\mathcal{L}$.
\item Suppose that the $\A$-bimodule $\A$ has finite length and $M$ is arbitrary. Then we have $2\dim \VA(M)
\geqslant \dim \VA(\A/\I)$.
\item Suppose $M$ is simple. Then the intersection of any irreducible component of $\VA(M)$ with $\mathcal{L}$ is non-empty.
\end{enumerate}
\end{Thm}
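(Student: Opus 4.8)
The plan is to first reduce (3) to a statement about the boundary of $\mathcal{L}$ and then run the slice machinery. By part (1) we have $\VA(M)\subseteq\VA(\A/\I)=\overline{\mathcal{L}}$, and $\mathcal{L}$ is open and dense in $\overline{\mathcal{L}}$; hence an irreducible component $Y$ of $\VA(M)$ meets $\mathcal{L}$ if and only if its generic point lies in $\mathcal{L}$, i.e.\ if and only if $Y\not\subseteq\overline{\mathcal{L}}\setminus\mathcal{L}$. So (3) is equivalent to the assertion that $\VA(M)$ has no irreducible component contained in the boundary $\overline{\mathcal{L}}\setminus\mathcal{L}$, and I would prove this by contradiction.

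Suppose $Y$ is an irreducible component of $\VA(M)$ whose generic point lies in a symplectic leaf $\mathcal{L}'\neq\mathcal{L}$, necessarily with $\mathcal{L}'\subsetneq\overline{\mathcal{L}}$. Choose $x\in Y\cap\mathcal{L}'$ lying on no other irreducible component of $\VA(M)$; this is possible since $Y\cap\mathcal{L}'$ is open and dense in $Y$ while each of the finitely many other components meets $Y$ in a proper closed subset. Now I would apply a restriction-to-a-slice functor $\bullet_{\dagger,x}$ at $x$, of the kind used to prove part (1): it produces $M_{\dagger,x}$, an object over $\underline{\A}\,\widehat\otimes\,\Weyl_m$ with $2m=\dim\mathcal{L}'$, where $\Weyl_m$ is the completed Weyl algebra of the formal leaf direction and $\underline{\A}=\underline{\A}_x$ is again an algebra of the type considered here: $\gr\underline{\A}$ is the coordinate ring of the transverse slice $\Sigma$ to $\mathcal{L}'$ at $x$, it has finitely many symplectic leaves, and the algebraic fundamental groups of those leaves are finite (they are the fundamental groups of the leaves of $X$ through $x$). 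Using the standard equivalence between $(\underline{\A}\,\widehat\otimes\,\Weyl_m)$-modules and $\underline{\A}$-modules, write $M_{\dagger,x}=N\,\widehat\otimes\,\C[[z_1,\dots,z_m]]$ for an $\underline{\A}$-module $N$, which is nonzero precisely because $x\in\VA(M)$. Since $\bullet_{\dagger,x}$ intertwines associated varieties with the transverse-slice operation, $\VA_{\underline{\A}}(N)$ is the transverse slice $\VA(M)\cap\Sigma$; by the choice of $x$ the variety $\VA(M)$ coincides with $Y$ near $x$, and as $Y\cap\mathcal{L}'$ is open in $Y$ the germ of $Y$ at $x$ lies inside $\mathcal{L}'$, which is transverse to $\Sigma$. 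Hence $\VA(M)\cap\Sigma$ is the reduced point $\{x\}$, so $\VA_{\underline{\A}}(N)=\{0\}$ and $N$ is a nonzero \emph{finite-dimensional} $\underline{\A}$-module.

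Finally, restriction is compatible with two-sided ideals: the ideal $\I$ restricts to a two-sided ideal $\I'$ of $\underline{\A}$ with $\underline{\A}/\I'$ again of the type under study, $\VA(\underline{\A}/\I')=\overline{\mathcal{L}}\cap\Sigma$ of positive dimension $\dim\overline{\mathcal{L}}-\dim\mathcal{L}'$, and $\I'\subseteq\Ann_{\underline{\A}}(N)$. If one knows that this last inclusion is an equality — equivalently, that $\bullet_{\dagger,x}$ preserves the annihilator of the simple module $M$ exactly — then $N$ is a nonzero finite-dimensional module on which $\underline{\A}/\I'$ acts faithfully, so $\underline{\A}/\I'$ embeds into $\operatorname{End}_{\C}(N)$ and is therefore finite-dimensional, contradicting the positivity of $\dim\VA(\underline{\A}/\I')$; thus no such component $Y$ exists. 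The hard part will be exactly this exactness of restriction on the annihilator: I expect it to follow from the faithful (double-centralizer) properties of $\bullet_{\dagger,x}$, and it is here — together with the very construction of $\bullet_{\dagger,x}$ — that the finiteness of the algebraic fundamental groups of the leaves enters essentially; the geometric input, that the germ of $Y$ at a generic $x$ is squeezed transversally to $\Sigma$ inside $\mathcal{L}'$, is then formal given part (1).
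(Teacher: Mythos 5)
Your proposal covers only part (3) of the theorem, taking part (1) as an input; parts (1) and (2) are where almost all of the work in the paper lies (the completion--decomposition machinery at a smooth point of $\VA(M)$, the decompletion statement for two-sided ideals, and the minimal-ideal argument using finite length of the regular bimodule for (2)), and none of that is supplied. In the paper, (3) is then a one-line consequence: applying the local analysis of (1) at a generic (smooth) point $x$ of \emph{any} irreducible component $Y$ of $\VA(M)$ yields $\VA(\A/\I)=\overline{\mathcal{L}'}$ for the leaf $\mathcal{L}'\ni x$; since also $\VA(\A/\I)=\overline{\mathcal{L}}$, one gets $\mathcal{L}'=\mathcal{L}$ and hence $Y\cap\mathcal{L}\neq\varnothing$. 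So the overall strategy you describe for (3) (localize at a generic point of a putative boundary component and derive a contradiction from the slice decomposition) is the right one, but you are doing from scratch the one part that is free once (1) is proved, while omitting the proof of (1).

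More importantly, the step you flag as ``the hard part'' is not fixable in the form you state it. The hoped-for lemma --- that $\bullet_{\dagger,x}$ preserves the annihilator exactly, i.e.\ $\Ann_{\underline{\A}}(N)=\I_{\dagger,x}$ --- cannot be a general fact: under your standing assumption that $x$ lies in a boundary leaf $\mathcal{L}'\subsetneq\overline{\mathcal{L}}$, the restricted ideal $\I_{\dagger,x}$ has positive-dimensional support $\overline{\mathcal{L}}\cap\Sigma$ while $\Ann_{\underline{\A}}(N)$ has finite codimension, so the equality is precisely what fails; it holds if and only if $x$ lies in the open leaf of $\VA(\A/\I)$, which is the conclusion you are trying to reach. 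Invoking it as a lemma therefore makes the argument circular, and no ``double centralizer'' property will rescue it. What the paper actually proves is different in a crucial way: (a) the preimage in the \emph{global} algebra $\A_\hbar$ (not in the slice algebra) of the annihilator of any quotient of $M^{\wedge_x}_{\hbar^{\pm 1}}$ equals $\I_\hbar$ --- this uses simplicity of $M$ via Dixmier's lemma (Lemma \ref{Lem:simplicity}, Corollary \ref{Cor:compln_quot}); and (b) a decompletion statement (Proposition \ref{Prop:support}): an $\mathsf{eu}$-stable, $\hbar$-saturated two-sided ideal of $\A_\hbar^{\wedge_x}$ whose quotient is supported on $\mathcal{L}'^{\wedge_x}$ pulls back to an ideal of $\A$ whose quotient is supported on exactly $\overline{\mathcal{L}'}$. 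Step (b) is the genuinely hard point, proved by showing that the sum of finitely generated Poisson subbimodules of the completed quotient is itself finitely generated, which rests on the D-module/Poisson-module analysis on the finite cover $\widetilde{\mathcal{L}'}$ --- this is exactly where the finiteness of $\pi_1^{alg}(\mathcal{L}')$ enters. Combining (a) and (b) gives $\VA(\A/\I)=\overline{\mathcal{L}'}$, contradicting $\VA(\A/\I)=\overline{\mathcal{L}}$; the contradiction lives upstairs in $\A$, not in the slice algebra. You correctly sensed where the fundamental-group hypothesis must be used, but without (a) and (b) --- or the proof of (1) itself --- the argument has a genuine hole.
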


The claim that $\VA(\A/\I)$ is irreducible when $M$ is simple was first proved by Ginzburg, \cite{Ginzburg_Irr}.
In Section \ref{S_inf_leaf}  it will be shown  that even (1)
of Theorem \ref{Thm:main_ineq} does not need to hold when $X$ fails to have finitely many leaves. Note that (1) implies
that the inequality $2\dim \VA(M)\geqslant \dim \VA(\A/\I)$ will hold for all finite length  modules.

Before we state the second main result of this paper, let us give a definition of a holonomic
$\A$-module. We say that a (locally closed) subvariety $Y\subset X$
is {\it isotropic} if the smooth locus of the intersection $Y\cap \mathcal{L}$ is isotropic in the usual sense
(meaning that the symplectic form vanishes on that  locus)
for any symplectic leaf $\mathcal{L}$ of $X$.  We say that $M$ is {\it holonomic} if $\VA(M)$ is isotropic.
We motivate this definition in Section \ref{S_App} that is a joint appendix with Etingof
and also in Sections \ref{SS_RCA_fin},\ref{SS_red_fin}. Note
that every subquotient of a holonomic module is itself holonomic. An extension of two holonomic modules
is holonomic as well.

\begin{Thm}\label{Thm:main_eq}
Assume that the algebraic fundamental group of every symplectic leaf of $X$ is finite.
Suppose that $M$ is holonomic. Then the following is true:
\begin{enumerate}
\item If  $M$ is simple, then $2\dim \VA(M)=\dim \VA(\A/\I)$.
\item If the $\A$-bimodule $\A$ has finite length, then $M$ has finite length.
Consequently, $2\dim \VA(M)=\dim \VA(\A/\I)$.
\item If $M$ is simple, then the variety $\VA(M)$ is equi-dimensional.
\end{enumerate}
\end{Thm}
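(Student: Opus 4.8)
The plan is to obtain (1) and (3) directly from Theorem~\ref{Thm:main_ineq}, and to prove (2) by induction, with the finite length of the regular bimodule entering at the crucial step.

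\emph{Parts (1) and (3).} Since $M$ is simple, $\VA(\A/\I)$ is irreducible by Ginzburg's theorem (\cite{Ginzburg_Irr}); write $\VA(\A/\I)=\overline{\mathcal L}$. Because $\mathcal L$ is open in $\overline{\mathcal L}\supset\VA(M)$, the set $\VA(M)\cap\mathcal L$ is open in $\VA(M)$, and by part~(3) of Theorem~\ref{Thm:main_ineq} it meets every irreducible component of $\VA(M)$, so it is open and dense there. By part~(1) of Theorem~\ref{Thm:main_ineq} this set is coisotropic in the symplectic manifold $\mathcal L$; since $M$ is holonomic it is also isotropic, hence Lagrangian, hence equidimensional of dimension $\tfrac12\dim\mathcal L$. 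Passing to closures gives (3), and with it $2\dim\VA(M)=\dim\mathcal L=\dim\VA(\A/\I)$, which is (1).

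\emph{Part (2).} The last clause follows from the first: if $M$ has composition factors $S_1,\dots,S_r$, then $\VA(M)=\bigcup_i\VA(S_i)$ and $\dim\VA(\A/\I)=\max_i\dim\VA(\A/\Ann S_i)$ (using that $(\bigcap_i\Ann S_i)/\I$ is nilpotent), so applying (1) to each $S_i$ yields $\dim\VA(M)=\tfrac12\dim\VA(\A/\I)$. Thus it suffices to show that a finitely generated holonomic module $M$ has finite length, which I would prove by induction on $n:=\dim\VA(M)$. The case $n=0$ is clear, $\gr M$ and hence $M$ being finite-dimensional. For $n>0$: $\A$ is Noetherian ($\gr\A$ is finitely generated commutative), so $M$ is Noetherian, whence the sum $M'$ of all submodules of $M$ with associated variety of dimension $<n$ is again such a submodule, holonomic, and of finite length by induction; it remains to treat $\bar M:=M/M'$, every nonzero submodule of which has $n$-dimensional associated variety. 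Here I would combine two inputs. First, for an $n$-dimensional irreducible $Y\subset X$ the multiplicity $m_Y(N):=\operatorname{length}_{A_{\eta_Y}}(\gr N)_{\eta_Y}$, computed from a good filtration, is independent of the filtration and additive in short exact sequences of modules with associated variety of dimension $\leq n$; additivity of $\sum_{\dim Y=n}m_Y$ then bounds the number, counted with multiplicity, of composition factors of $\bar M$ whose associated variety has dimension $n$. Second, to handle the composition factors of smaller support, I would reduce $\bar M$ by a finite filtration with subquotients annihilated by primitive ideals (available because the regular bimodule $\A$ has finite length, its simple bimodule subquotients having primitive one-sided annihilators), and then apply to such a subquotient $W$ the restriction functor $\bullet_{\dagger}$ along a leaf $\mathcal L$ meeting a top-dimensional component of $\VA(W)$ densely: this produces a finitely generated holonomic module over the slice algebra $\underline{\A}$, whose spectrum is lower-dimensional and whose regular bimodule again has finite length --- this last fact being where the hypothesis on $\A$ is genuinely used, via compatibility of bimodule restriction with composition series. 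A descending induction on the codimension of the leaf then reduces the remaining composition factors, those supported on proper leaf closures of $\VA(\bar M)$, to cases already settled.

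The step I expect to be the main obstacle is exactly this last reduction: bounding the \emph{total} multiplicity of composition factors of $\bar M$ supported on smaller leaf closures. The multiplicity estimate controls only the generic part, and an induction on $n$ alone fails, since a holonomic module whose every nonzero submodule has full-dimensional support can still have composition factors of smaller support (a highest-weight module with a finite-dimensional quotient is the local picture); finiteness of the regular bimodule, transported through the slice algebras, is what prevents the consequent potential failure of the descending chain condition --- the same phenomenon whose absence for general $X$ underlies the counterexample of Section~\ref{S_inf_leaf} to even part~(1) of Theorem~\ref{Thm:main_ineq}.
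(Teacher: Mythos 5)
Your treatment of parts (1) and (3) is correct and is essentially the paper's argument: combine the coisotropy and non-empty-intersection statements of Theorem \ref{Thm:main_ineq} with isotropy of $\VA(M)$ to conclude that each component of $\VA(M)\cap\mathcal{L}$ is Lagrangian in $\mathcal{L}$.

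For part (2), however, there is a genuine gap, and it is exactly the one you flag yourself at the end: your induction is on $n=\dim\VA(M)$, and after splitting off $M'$ you are left with $\bar M$ whose socle-type structure you control only generically. The multiplicity $\sum_{\dim Y=n}m_Y$ bounds the number of composition factors with $n$-dimensional support, but says nothing about how many factors of smaller support can be interleaved between them, and nothing in your setup forces that number to be finite. The proposed repair --- filtering by primitive quotients and restricting to slice algebras $\underline{\A}$ --- is not carried out: you would need to know that the regular bimodule of each slice algebra has finite length (this does not follow formally from the hypothesis on $\A$) and that the restriction functor detects composition series, neither of which you establish. So as written the argument does not close.

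The paper closes this loop by running the induction on a different quantity, namely $\dim\VA(\A/\I)$ rather than $\dim\VA(M)$, and this is where the finite length of the regular bimodule is actually used: it guarantees the existence of the \emph{minimal} two-sided ideal $\I'\supset\I$ with $\dim\VA(\A/\I')<\dim\VA(\A/\I)=2n$. Every subquotient of $M$ killed by $\I'$ is a holonomic module whose annihilator has strictly smaller associated variety, hence has finite length by the inductive hypothesis --- this is what disposes of all the ``small-support'' composition factors without ever restricting to a slice. One then alternates: $M/\I'M$ has finite length by induction; $\I'M$ has a simple quotient, which by part (1) has GK dimension exactly $n$ and hence positive multiplicity on the $n$-dimensional components; iterating produces a chain $M\supset M_1\supset M_2\supset\cdots$ in which each step either lands in the inductive case or strictly decreases the top-dimensional multiplicity, so the process terminates. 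If you replace your induction on $\dim\VA(M)$ by this induction on $\dim\VA(\A/\I)$ and use the minimal ideal $\I'$ in place of the slice-algebra reduction, your argument becomes complete; as it stands, the key finiteness assertion is asserted rather than proved.
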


A result similar to (3) was obtained by Gabber, \cite{Gabber_equi}, (unpublished).
He proved that the associated variety of an irreducible module over the universal
enveloping algebra of a finite dimensional Lie algebra is equi-dimensional.

Finally, let us state some sufficient conditions for $\A$ to satisfy the assumptions in Theorem \ref{Thm:main_ineq}.

\begin{Thm}\label{Thm:fin_length}
Suppose that $\A$ is one of the following algebras:
\begin{enumerate}
\item[(i)] the global sections of a quantization of a conical symplectic resolution (Section \ref{SS_quant_resol}),
\item[(ii)] the quantum Hamiltonian reduction $\Weyl(V)\red_\lambda G$, where $V$
is a symplectic vector space and $G$ is a reductive group (Section \ref{SS_quant_red}),
\item[(iii)] a (spherical) Rational Cherednik algebra (Section \ref{SS_SRA}).
\end{enumerate}
If $\A$ is as in (i) and (iii), then the algebraic
fundamental group of every leaf in $X$ is finite and the $\A$-bimodule $\A$
has finite length. In (ii), the regular $\A$-bimodule has finite length.
\end{Thm}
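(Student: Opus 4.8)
The three cases are handled by rather different arguments, so I would treat them one at a time, but in each the two things to establish are (a) finiteness of the algebraic fundamental groups $\pi_1^{\mathrm{alg}}(\Leaf)$ of the symplectic leaves $\Leaf\subset X$, and (b) finite length of the regular bimodule $\A$. For (i), the global sections $\A=\Gamma(\mathcal{D})$ of a quantization $\mathcal{D}$ of a conical symplectic resolution $\rho\colon \tilde X\to X$: here $X$ is a conical symplectic singularity, and by the work of Namikawa (and Kaledin) such $X$ has finitely many symplectic leaves, each of which admits a symplectic resolution itself; finiteness of $\pi_1^{\mathrm{alg}}$ of the leaves follows because the generic leaf is the regular locus $X^{\mathrm{reg}}$, which is simply connected in the algebraic sense for a symplectic singularity, and one runs an inductive argument down the (finite) stratification using the transverse slice description of the leaves. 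For finite length of the bimodule, the key input is the theory of Harish-Chandra bimodules over quantizations of symplectic resolutions: any HC $\A$-bimodule has an associated variety inside $X$ which is a union of leaves, and a dimension/induction argument on the stratification plus the restriction functors to slices (which preserve HC bimodules and are compatible with the stratification) shows that the category of HC bimodules with a fixed associated variety is ``finite'' in the appropriate sense; since $\A$ itself is a HC bimodule, it has finite length. I would cite the standard references (Bezrukavnikov--Losev, Losev's work on HC bimodules, Namikawa) for these structural facts.

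For case (iii), the (spherical) Rational Cherednik algebra $\A=e H_c(W,\h)e$: here $X=\operatorname{Spec}(\gr\A)$ is $(\h\oplus\h^*)/W$, whose leaves are indexed by the conjugacy classes of parabolic subgroups $W'\subset W$ and are of the form $(\h\oplus\h^*)^{W'}_{\mathrm{reg}}/N_W(W')$; the algebraic fundamental group of such a leaf is a quotient of $N_W(W')/W'$ (finite), so (a) is immediate. For finite length of the bimodule one uses that $\A$ deforms $\K[(\h\oplus\h^*)/W]$, that the category of HC $H_c$-bimodules is well behaved (restriction functors à la Bezrukavnikov--Etingof to parabolic subgroups, together with the fact that a HC bimodule supported at the origin is finite-dimensional over $\K$ and is a module over $H_c(W,\h)/(\text{augmentation})$, a finite-dimensional algebra by the theory of category $\mathcal{O}_c$), and then an induction on the support stratification exactly as in case (i). Case (ii), quantum Hamiltonian reduction $\Weyl(V)\red_\lambda G$, is the one where we only claim finite length of the bimodule and not finiteness of $\pi_1^{\mathrm{alg}}$ (indeed the latter can fail). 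Finite length of the regular bimodule here follows from the fact that $\Weyl(V)\red_\lambda G$ is a quotient of the ``quantum moment map algebra'' and that HC bimodules for it come from $G$-equivariant HC $\Weyl(V)$-bimodules by reduction; since $\Weyl(V)$ itself is simple, the $G$-equivariant bimodule $\Weyl(V)$ has finite length as a $(G,\Weyl(V)\otimes\Weyl(V)^{\mathrm{op}})$-module (its composition factors are governed by the finitely many $G$-orbits of the relevant moment-map-level variety), and reduction is exact enough to transport this to $\A$.

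The main obstacle, in all three cases, is the finite-length statement for the bimodule: one needs a robust theory of Harish-Chandra bimodules --- in particular exact restriction functors to symplectic slices that are compatible with the leaf stratification, and the base case that a HC bimodule whose associated variety is a point (equivalently, one on which a maximal symplectic leaf acts trivially after restriction) is finite-dimensional over $\K$. Granting that machinery, the passage to finite length is a clean Noetherian induction on the finite stratification of $X$ by leaves: if $B$ is a HC $\A$-bimodule, pick an open leaf $\Leaf$ in $\VA(B)$, restrict to the slice to reduce the support, and use the finite-length base case plus the inductive hypothesis applied to the kernel/cokernel of the restriction. For (a), the subtlety is only in case (i): one must know that for a conical symplectic singularity every leaf has finite algebraic fundamental group, which I would deduce from Namikawa's results on the local structure of symplectic singularities (each leaf, étale-locally on $X$, looks like (leaf)$\times$(conical symplectic singularity), and the ``leaf'' factor is smooth symplectic with finite $\pi_1^{\mathrm{alg}}$ because it fibers over a smooth affine base), together with the fact that a smooth symplectic variety arising this way has finite algebraic fundamental group.
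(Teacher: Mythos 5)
Your treatment of case (iii) matches the paper's: the leaves are $\{v\in V\mid \Gamma_v=\underline{\Gamma}\}/N_\Gamma(\underline{\Gamma})$ with finite fundamental group, and finite length follows by induction over the stratification using the restriction functors $\bullet_{\dagger,\underline{\Gamma}}$ of \cite{sraco}, with base case supplied by category $\mathcal{O}$ (more precisely, Lemma \ref{Lem:RCA_fin}: the category of finite-dimensional modules has \emph{enough projectives}, equivalently there is a minimal two-sided ideal of finite codimension --- not merely that some augmentation quotient is finite dimensional). For cases (i) and (ii), however, there are genuine gaps. In (i) you propose to run the same slice-restriction induction, but its base case is precisely that finite-dimensional (bi)modules over the slice algebras form a finite-length category, which needs a minimal ideal of finite codimension in those algebras; for RCAs this comes from category $\mathcal{O}$, but for a general quantized symplectic resolution no such category is available, and the paper explicitly flags this as the open obstruction for general symplectic reflection algebras in Remark \ref{Rem:SRA}. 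The paper's actual route is different and is what closes this gap: for $N\gg 0$ localization holds for $(\lambda+N\chi,-\lambda-N\chi)$ on $\tilde{X}^{\chi}\times\tilde{X}^{-\chi}$, the localization of the regular bimodule is supported on the preimage of the diagonal, which is isotropic by Lemma \ref{Lem:isotr_resol}, hence holonomic and of finite length; the statement is then transported from $\lambda+N\chi$ to $\lambda$ via the translation bimodules $\A_{\lambda,N\chi}$, Lemma \ref{Lem:LR_Ann} and the adjunction (\ref{eq:RHom_LLoc}), which produce the minimal ideals $\I_i$ that drive the induction over leaves. Without some replacement for this input your induction in (i) cannot start.

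In case (ii) your argument is not a proof. Simplicity of $\Weyl(V)$ makes the regular bimodule simple over $\Weyl(V)\otimes\Weyl(V)^{opp}$, but the object relevant to the reduction is the quotient of $\Weyl(V\oplus V)$ by the $(G\times G)$-moment map ideal tensored with $\A$, and ``its composition factors are governed by the finitely many $G$-orbits'' does not establish that it has finite length. The paper's mechanism is to observe that $(\Weyl(V)\red_\lambda G)\otimes(\Weyl(V)\red_\lambda G)^{opp}\cong \Weyl(V\oplus V)\red_{(\lambda,-\lambda)}(G\times G)$ and that the regular bimodule is a \emph{holonomic} module over this reduction; finite length of holonomic modules over quantum Hamiltonian reductions is then proved by lifting along $\varphi^*$ to a holonomic $\Weyl$-module. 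The essential geometric input, absent from your sketch, is Proposition \ref{Prop:isotr_red}: $Y\subset V\red_0 G$ is isotropic if and only if $\pi^{-1}(Y)$ is, proved via the Kempf--Ness stratification of the null-cone together with the slice isomorphism (\ref{eq:iso_compl}). (Also, the paper does not assert that finiteness of $\pi_1^{alg}$ of the leaves \emph{fails} for Hamiltonian reductions; it states that this is unknown, and that no counterexamples are known.)
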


We do not know if the algebraic fundamental group of every leaf for Hamiltonian reductions is finite, in general.
By \cite{Namikawa_fund}, this is true for varieties with {\it symplectic singularities},
this includes all varieties with symplectic resolutions and also quotients of vector spaces
by finite groups of symplectomorphisms. Our finiteness claim for Hamiltonian reductions will
follow if we check that the normalization of the reduction has symplectic singularities.
Also note that we do not know any examples, where the (algebraic) fundamental groups of the leaves
are infinite.

The paper is organized as follows. In Section \ref{S_alg} we describe algebras to be studied.
In Section \ref{S_ineq} we prove Theorems \ref{Thm:main_ineq},\ref{Thm:main_eq}. In Section
\ref{S_fin_len} we prove Theorem \ref{Thm:fin_length}. In Section \ref{S_App}
(a joint appendix with Etingof) we motivate the definitions of isotropic subvarieties
and of holonomic modules in case of varieties having a symplectic resolution.

{\bf Acknowledgements}. I am  grateful to Pavel Etingof for numerous discussions related to this project,
for many comments on a preliminary version of this paper, for  explaining me the content of
Section \ref{S_inf_leaf} and for his consent to write a joint appendix. I also thank Geordie Williamson
for mentioning results of Gabber, \cite{Gabber_equi},  and Thierry Levasseuer for
explaining these results.  My work
was partially supported by the NSF grant DMS-1161584. The work of P.E. was partially supported
by the NSF grant DMS-1000113.

\section{Algebras of interest}\label{S_alg}
In this section we will describe algebras we are interested in.
\subsection{General setting}\label{SS_gen_alg}
The most general setting for us is as follows, see \cite{Losev_App}.

Let $\A$ be an associative  algebra with unit equipped with an increasing algebra filtration
$\A=\bigcup_{i\geqslant 0}\A_{\leqslant i}$.
Consider the associated graded algebra $A$ of $\A$ and the center $C$ of $A$.
Then $A$ and $C$ are graded  algebras, let $A_i,C_i$ denote the components of degree $i$.

We assume that the algebra $C$ is finitely generated and that there is an integer $d>0$ and a linear embedding $\iota:C\hookrightarrow \A$
satisfying the following conditions
\begin{itemize}
\item[(i)] $\iota(C_i)\subset \A_{\leqslant i}$ for all $i$ and the composition of $\iota$ with the projection
$\A_{\leqslant i}\twoheadrightarrow A_i$ coincides with the inclusion $C_i\hookrightarrow A_i$.
\item[(ii)] $[\iota(C_i),\A_{\leqslant j}]\subset \A_{\leqslant i+j-d}$ for all $i,j$.
\end{itemize}

Under this assumption, the algebra $C$ has a natural Poisson bracket: for $a\in C_i, b\in C_j$ for
$\{a,b\}$ take the image of $[\iota(a),\iota(b)]$ in $A_{i+j-d}$ (this image lies in $C_{i+j-d}$).
The claim that the map $(a,b)\mapsto \{a,b\}$ extends to a Poisson bracket on $C$ is checked directly.
Note that $\{\cdot,\cdot\}$ is independent of the choice of $\iota$.

For example, if $\A$ is {\it almost commutative}, that is $C=A$, then for $d$ one can take the maximal integer
with $[\A_{\leqslant i}, \A_{\leqslant j}]\subset \A_{\leqslant i+j-d}$ for all $i,j$ (and for $\iota$ the direct sum of sections of the projections $\A_{\leqslant i}\twoheadrightarrow A_i$).

Then we assume that the reduced affine scheme $X$ defined by $C$ has finitely many symplectic leaves. In the next three sections we will provide several concrete examples.

\subsection{Quantizations of symplectic resolutions}\label{SS_quant_resol}
Assume that $X$ admits a conical symplectic resolution $\tilde{X}$. This means that $\tilde{X}$
is a smooth variety equipped with a symplectic form $\omega$, a $\K^\times$-action rescaling $\omega$
with weight $d\in \Z_{>0}$, i.e., $t.\omega=t^d\omega$, and a $\C^\times$-equivariant projective
resolution of singularities
morphism $\rho:\tilde{X} \twoheadrightarrow X$ that is also a Poisson morphism. It is known (due to Kaledin, \cite{Kaledin}) that in this case $X$ has finitely many leaves.

We have a notion of a filtered quantization of $\tilde{X}$, this is a sheaf of filtered algebras
on $\tilde{X}$ in a so called conical topology. Filtered quantizations of $\tilde{X}$ are parameterized, up to an isomorphism, by $H^2_{DR}(\tilde{X})$, see \cite[Section 2]{quant}, let $\mathcal{D}_\lambda$ denote the quantization
corresponding to $\lambda$. Then the global sections algebra $\A_\lambda:=\Gamma(\mathcal{D}_\lambda)$
equipped with the filtration inherited from $\mathcal{D}_\lambda$  satisfies $\gr \A_\lambda=\K[X]$.
This algebra is independent of the choice of a symplectic resolution, see \cite[Section 3.3]{BPW}.

Note that the algebra $U_\lambda$ considered in Section \ref{SS_Bernstein_ineq} is of this form, in this case $\tilde{X}=T^*\mathcal{B}$, where $\mathcal{B}$
is the flag variety of $G$. Other examples include (the central reductions of) finite W-algebras
and also some of quantum Hamiltonian  reductions (such as quantized Nakajima quiver varieties or
quantized hypertoric varieties).

Below we will need some structure theory of conical symplectic resolutions
due to Kaledin and Namikawa gathered in \cite[Section 2]{BPW} and also of their quantizations.
First of all, set $\param:=H^2_{DR}(\tilde{X})$. These spaces are naturally identified for different resolutions
and so depend only on $X$.

Then we have a symplectic scheme $\widehat{X}$ over $\param$ whose fiber
at $0$ is $\tilde{X}$. Outside of the union $S$ of finitely many codimension $1$ vector subspaces in $\param$ the fiber of
$\widehat{X}\rightarrow \param$ is smooth affine. Finally, $\widehat{X}$ comes with a $\K^\times$-action
that
\begin{enumerate}
\item induces the dilation action on $\param$,
\item rescales the fiberwise symplectic form by $t\mapsto t^d$,
\item restricts to  the $\K^\times$-action on $\tilde{X}$
mentioned in the beginning of the section.
\end{enumerate}

The Chern class map $\operatorname{Pic}(\tilde{X})\rightarrow H^2_{DR}(\tilde{X})$  identifies
$\param$ with $\K\otimes_{\Z}\operatorname{Pic}(\tilde{X})$. The image of $\operatorname{Pic}(\tilde{X})$
in $\param$ will be denoted by $\param_{\Z}$, this lattice is also independent of $\tilde{X}$. All codimension $1$ subspaces in $S$ are  rational.

There is a crystallographic group $W$ acting on $\param$ preserving $\param_{\Z}$ and $S$
in such a way that the closure of the movable cone
of $\tilde{X}$ (that is independent of the choice of $\tilde{X}$) is a fundamental chamber for $W$, let us denote it by $\mathcal{C}$. The walls of $\mathcal{C}$ belong to $S$. Moreover, the possible ample cones of conical symplectic resolutions are precisely the chambers cut by $S$ inside of $\mathcal{C}$. The resolution is uniquely recovered
from the chamber.  So for $\chi\in \param_{\Z}\setminus S$ we have the resolution
$\tilde{X}^{\chi}$ corresponding to the ample cone containing $W\chi\cap \mathcal{C}$.

Now let us discuss various isomorphisms between the algebras $\A_\lambda$ and related algebras.
First of all, $\A_\lambda\cong \A_{w\lambda}$ for any $w\in W$  (a filtration preserving isomorphism
that is the identity on the associated graded algebra $\C[X]$), see \cite[Section 3.3]{BPW}. Also
$\A_{\lambda}^{opp}\cong \A_{-\lambda}$, this follows from \cite[Section 2.3]{quant}.

Now let us discuss the localization theorem, see \cite[Section 5]{BPW}. Pick $\chi\in \param_{\Z}\setminus S$
and let $\tilde{X}:=\tilde{X}^\chi$. Let $w\in W$ be such that $W\chi\in \mathcal{C}$.
Consider the category $\operatorname{Coh}(\mathcal{D}_\lambda)$ of all {\it coherent}  $\mathcal{D}_\lambda$-modules,
i.e., sheaves of modules that can be equipped with a filtration whose associated graded is a
coherent sheaf. We have the global section functor $\Gamma_\lambda: \operatorname{Coh}(\mathcal{D}_\lambda)
\rightarrow \A_\lambda\operatorname{-mod}$ and its left adjoint, the localization functor
$\Loc_\lambda: \A_\lambda\operatorname{-mod}\rightarrow \operatorname{Coh}(\mathcal{D}_\lambda)$.
We say that localization holds for $\lambda$ and $\chi$ if $\Gamma_{w\lambda}$ and $\Loc_{w\lambda}$ are mutually
quasi-inverse equivalences.

Here is a basic result regarding parameters where the localization holds, \cite[Section 5]{BPW}. Fix $\lambda$ and $\chi$
as before. Then for $N\in \Z, N\gg 0$, localization holds for $(\lambda+N\chi,\chi)$. Moreover, we can provide an alternative description of the functors $\Gamma_\lambda, \Loc_\lambda$. Let $\mathcal{O}_{N\chi}$ denote the ample
line bundle corresponding to $N\chi$. Then $\mathcal{O}_{N\chi}$ admits a unique quantization to a
$\mathcal{D}_{\lambda+N\chi}$-$\mathcal{D}_\lambda$-bimodule to be denoted by $\mathcal{D}_{\lambda, N\chi}$,
see \cite[Section 5.1]{BPW}.
We write $\A_{\lambda,N\chi}$ for the global sections of $\mathcal{D}_{\lambda,N\chi}$. We note that
the functor $\mathcal{D}_{\lambda,N\chi}\otimes_{\mathcal{D}_\lambda}\bullet$ is an equivalence
$\operatorname{Coh}(\mathcal{D}_\lambda)\xrightarrow{\sim} \operatorname{Coh}(\mathcal{D}_{\lambda+N\chi})$
with quasi-inverse $\mathcal{D}_{\lambda+N\chi,-N\chi}\otimes_{\mathcal{D}_{\lambda+N\chi}}\bullet$.
So we can identify $\operatorname{Coh}(\mathcal{D}_\lambda)$ with $\A_{\lambda+N\chi}\operatorname{-mod}$
by means of $\Gamma(\mathcal{D}_{\lambda,N\chi}\otimes_{\mathcal{D}_\lambda}\bullet)$.

The following lemma is essentially in \cite[Section 5.3]{BPW}.

\begin{Lem}\label{Lem:loc_fun}
Under the identification $\operatorname{Coh}(\mathcal{D}_\lambda)\xrightarrow{\sim}
\A_{\lambda+N\chi}\operatorname{-mod}$, the functor $\Loc_\lambda$ becomes
$\A_{\lambda,N\chi}\otimes_{\A_\lambda}\bullet$ and hence the functor $\Gamma_\lambda$
becomes  $\operatorname{Hom}_{\A_{\lambda+N\chi}}(\A_{\lambda,N\chi},\bullet)$.
\end{Lem}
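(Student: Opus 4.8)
The plan is to identify the two functors by a direct computation using adjunction and the bimodule structures, reducing everything to the known facts about $\mathcal{D}_{\lambda,N\chi}$ recalled just before the lemma. First I would fix the setup: after applying $w$ we may and do assume $\lambda$ itself lies in the appropriate chamber so that $\Gamma_\lambda$ and $\Loc_\lambda$ are defined as stated, and the identification $\operatorname{Coh}(\mathcal{D}_\lambda)\xrightarrow{\sim}\A_{\lambda+N\chi}\operatorname{-mod}$ is the functor $\mathcal{M}\mapsto \Gamma(\mathcal{D}_{\lambda,N\chi}\otimes_{\mathcal{D}_\lambda}\mathcal{M})$, with quasi-inverse built from $\mathcal{D}_{\lambda+N\chi,-N\chi}$. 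The target statement is then that, transported through this identification, $\Loc_\lambda\colon \A_\lambda\operatorname{-mod}\to\operatorname{Coh}(\mathcal{D}_\lambda)$ becomes $\A_{\lambda,N\chi}\otimes_{\A_\lambda}\bullet$, and dually $\Gamma_\lambda$ becomes $\operatorname{Hom}_{\A_{\lambda+N\chi}}(\A_{\lambda,N\chi},\bullet)$.

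The key steps, in order, would be: (1) Recall/establish that $\A_{\lambda,N\chi}=\Gamma(\mathcal{D}_{\lambda,N\chi})$ carries a natural $\A_{\lambda+N\chi}$-$\A_\lambda$-bimodule structure coming from global sections of the bimodule $\mathcal{D}_{\lambda,N\chi}$, and that for $N\gg 0$ the higher cohomology of $\mathcal{D}_{\lambda,N\chi}\otimes_{\mathcal{D}_\lambda}\mathcal{M}$ vanishes for coherent $\mathcal{M}$ (ampleness of $\mathcal{O}_{N\chi}$ plus the localization theorem of \cite[Section 5]{BPW}), so that $\Gamma(\mathcal{D}_{\lambda,N\chi}\otimes_{\mathcal{D}_\lambda}\bullet)$ is exact on $\operatorname{Coh}(\mathcal{D}_\lambda)$. (2) Show that the composite $\A_\lambda\operatorname{-mod}\xrightarrow{\Loc_\lambda}\operatorname{Coh}(\mathcal{D}_\lambda)\xrightarrow{\Gamma(\mathcal{D}_{\lambda,N\chi}\otimes_{\mathcal{D}_\lambda}\bullet)}\A_{\lambda+N\chi}\operatorname{-mod}$ is isomorphic to $\A_{\lambda,N\chi}\otimes_{\A_\lambda}\bullet$: both functors are right exact and commute with direct sums, so by a standard argument it suffices to produce a natural isomorphism on the free module $\A_\lambda$, i.e. to check $\Gamma(\mathcal{D}_{\lambda,N\chi}\otimes_{\mathcal{D}_\lambda}\Loc_\lambda(\A_\lambda))\cong \A_{\lambda,N\chi}$, and then verify compatibility with the $\A_\lambda$-action. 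Since $\Loc_\lambda$ is left adjoint to $\Gamma_\lambda$ and $\Gamma_\lambda(\mathcal{D}_\lambda)=\A_\lambda$, one has $\Loc_\lambda(\A_\lambda)=\mathcal{D}_\lambda$ (the unit map $\A_\lambda\to\Gamma_\lambda\Loc_\lambda(\A_\lambda)$ combined with the counit identifies it), whence the composite applied to $\A_\lambda$ gives $\Gamma(\mathcal{D}_{\lambda,N\chi}\otimes_{\mathcal{D}_\lambda}\mathcal{D}_\lambda)=\Gamma(\mathcal{D}_{\lambda,N\chi})=\A_{\lambda,N\chi}$, as desired. (3) Deduce the statement for $\Gamma_\lambda$ by passing to right adjoints: under the identification $\operatorname{Coh}(\mathcal{D}_\lambda)\simeq\A_{\lambda+N\chi}\operatorname{-mod}$, the functor $\Gamma_\lambda$ is right adjoint to $\Loc_\lambda$, hence right adjoint to $\A_{\lambda,N\chi}\otimes_{\A_\lambda}\bullet$, and the right adjoint of tensoring with a bimodule is the $\operatorname{Hom}$ functor $\operatorname{Hom}_{\A_{\lambda+N\chi}}(\A_{\lambda,N\chi},\bullet)$ by the tensor-hom adjunction. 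Uniqueness of adjoints then finishes it.

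The main obstacle I expect is step (2), specifically the bookkeeping needed to pin down $\Loc_\lambda(\A_\lambda)\cong \mathcal{D}_\lambda$ as a \emph{sheaf of algebras} (not merely as a $\mathcal{D}_\lambda$-module) and to track the $\A_\lambda$-bimodule structure through the tensor products so that the isomorphism $\Gamma(\mathcal{D}_{\lambda,N\chi}\otimes_{\mathcal{D}_\lambda}\Loc_\lambda(\bullet))\cong\A_{\lambda,N\chi}\otimes_{\A_\lambda}(\bullet)$ is genuinely natural and multiplicative, rather than just an abstract isomorphism of functors. Here one must use that $\Gamma(\mathcal{D}_{\lambda,N\chi}\otimes_{\mathcal{D}_\lambda}\bullet)$ is exact for $N\gg 0$ (step (1)) so that comparing on the free module propagates to all finitely generated modules via a presentation; the subtlety is that $\Loc_\lambda$ need not be exact in general, but since we ultimately factor through the \emph{equivalence} $\operatorname{Coh}(\mathcal{D}_\lambda)\simeq\A_{\lambda+N\chi}\operatorname{-mod}$ and the localization theorem for large $N$, the relevant composite is exact and the argument goes through. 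This is exactly the content of \cite[Section 5.3]{BPW}, so the proof really amounts to assembling those ingredients in the order above.
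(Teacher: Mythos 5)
Your proof is correct and is essentially the intended argument: the paper gives no proof of this lemma, deferring to \cite[Section 5.3]{BPW}, and your assembly (exactness of the identification functor for $N\gg 0$, Eilenberg--Watts on the free module, then passage to right adjoints via tensor-hom adjunction) is exactly what that reference amounts to. The only cosmetic point is that $\Loc_\lambda(\A_\lambda)=\mathcal{D}_\lambda$ holds by the definition $\Loc_\lambda=\mathcal{D}_\lambda\otimes_{\A_\lambda}\bullet$ rather than by an adjunction argument from $\Gamma_\lambda(\mathcal{D}_\lambda)=\A_\lambda$.
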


\subsection{Quantum Hamiltonian reductions}\label{SS_quant_red}
Let $V$ be a symplectic vector space and $G$ a reductive algebraic group acting on $V$
by linear symplectomorphisms. We write $\g$ for the Lie algebra of $G$.

The action of $G$ on $V$ has a natural moment map $\mu:V\rightarrow \g^*$. By the (classical)
Hamiltonian reduction $V\red_0 G$ we mean the categorical quotient $\mu^{-1}(0)\quo G$,
an affine scheme. There are finitely many symplectic leaves in the reduced scheme.
To see this, let $\pi:\mu^{-1}(0)\twoheadrightarrow \mu^{-1}(0)\quo G$ denote the quotient
morphism. For a reductive subgroup $H\subset G$, let $(V\red_0 G)_H$ denote the image of
the locally closed subvariety in $\mu^{-1}(0)$ consisting of all points $v\in \mu^{-1}(0)$
with $G_v=H$ and $Gv$ closed (that this subvariety is locally closed follows from the Luna
slice theorem). That theorem also implies that there are only finitely many  subgroups
$H$ (up to conjugacy) such that $(V\red_0 G)_H$ is nonempty.  Moreover, one can describe the structure of the
formal neighborhood $(V\red_0 G)^{\wedge_x}$ of a point $x\in (V\red_0 G)_H$.
Namely, let $v\in V$ be a point with closed $G$-orbit and $G_v=H$ mapping to $x$.
Consider the $H$-module $U=(T_v Gv)^{\perp}/T_v Gv$, where the complement is taken with respect
to the symplectic form. Then we have an isomorphism
\begin{equation}\label{eq:Ham_iso_compl} V^{\wedge_{Gv}}\cong [(T^*G\times U)\red_0 H]^{\wedge_{G/H}}\end{equation}
(the action of $H$ on $T^*G\times U$
is diagonal with the action on $T^*G$ given by right translations) of symplectic formal
schemes with Hamiltonian $G$-actions, compare with \cite[Section 4]{CB_norm}. (\ref{eq:Ham_iso_compl}) yields an isomorphism of Poisson
formal schemes
\begin{equation}\label{eq:iso_compl}
(V\red_0 G)^{\wedge_x}\cong (U\red_0 H)^{\wedge_0}.
\end{equation}
Note that the symplectic leaf containing $0$ in $U\red_0 H$ is $U^H$. 
From here it is easy to deduce that the symplectic leaves of $V\red_0 G$ are precisely the irreducible
components of the subvarieties $\pi((V\red_0 G)_H)$. 
In particular, the number of the leaves
is finite.

Let us proceed to quantum Hamiltonian reductions. We pick a character
$\lambda$ of $\g$. Consider the Weyl algebra $\Weyl(V)$ of $V$, it comes
with a natural filtration by the degree with respect to $V$. The group $G$ acts on $\Weyl(V)$ by filtered algebra automorphisms.
The comoment map $\mu^*: \g\rightarrow \C[V]$ lifts to a quantum comoment map
$\Phi: \g\rightarrow \Weyl(V)$. There are several liftings, we choose one that decomposes
as $\g\rightarrow \mathfrak{sp}(V)\hookrightarrow \Weyl(V)$, where the homomorphism
$\g\rightarrow \mathfrak{sp}(V)$ is induced by the $G$-action on $V$.
Now we set
$$\Weyl(V)\red_\lambda G:=[\Weyl(V)/\Weyl(V)\{\Phi(\xi)-\langle  \lambda,\xi\rangle| \xi\in \g\}]^G.$$
This is a filtered associative algebra.
It is easy to see that $\C[V\red_0 G]\twoheadrightarrow \gr \Weyl(V)\red_\lambda G$.
It follows that $\operatorname{Spec}(\gr \Weyl(V)\red_\lambda G)$ has finitely many
symplectic leaves as well and so the algebra $\Weyl(V)\red_\lambda G$ fits
the setting described in Section \ref{SS_gen_alg}.

\subsection{Symplectic reflection algebras}\label{SS_SRA}
Let $V$ be a symplectic vector space with form $\omega$ and $\Gamma$ be a finite subgroup of $\operatorname{Sp}(V)$.
We can form the smash-product algebra $S(V)\#\Gamma$ that coincides with
$S(V)\otimes \C\Gamma$ as a vector space, while the product is given by
$(f_1\otimes \gamma_1)\cdot (f_2\otimes \gamma_2)=f_1\gamma_1(f_2)\otimes \gamma_1\gamma_2$.
This algebra is graded, the grading is induced from the usual one on $S(V)$.

A symplectic reflection algebra $H_{t,c}$ is a filtered deformation of $S(V)\#\Gamma$.
Here $t,c$ are parameters, $t\in \C$ and $c$ is a $\Gamma$-invariant function
$S\rightarrow \C$, where $S$ is the set of symplectic reflections in $\Gamma$,
i.e., elements $s\in \Gamma$ such that $\operatorname{rk}(s-1)=2$.
In order to define $H_{t,c}$ we need some more notation. Let $\pi_s$ be the projection
$V\twoheadrightarrow \operatorname{im}(s-1)$ along $\ker(s-1)$. We define $\omega_s\in \bigwedge^2 V^*$
by $\omega_s(u,v):=\omega(\pi_s u,\pi_s v)$.

The algebra $H_{t,c}$ is the quotient of $T(V)\#\Gamma$ by the following relations
$$[u,v]=t\omega(u,v)+\sum_{s\in S}c(s)\omega_s(u,v)s.$$
The algebra $H_{t,c}$ inherits a filtration from $T(V)\#\Gamma$. As Etingof and Ginzburg
proved in \cite[Theorem 1.3]{EG}, we have $\operatorname{gr}H_{t,c}=S(V)\#\Gamma$.

Below we only consider the case of $t=1$. In this case, the algebra $H_{1,c}$ fits into
the framework described in Section \ref{SS_gen_alg}. Indeed, we have $\A= H_{1,c},
A=S(V)\#\Gamma, C=S(V)^\Gamma$.   Note that the algebra $H_{1,c}$ is $\Z/2\Z$-graded
and this grading is compatible with the filtration. A $\Z/2\Z$-graded embedding $\iota:C\rightarrow H_{1,c}$
satisfying (i) of Section \ref{SS_gen_alg} (such an embedding, obviously, exists) also satisfies (ii).

Inside $H_{1,c}$ we have a so called spherical subalgebra $eH_{1,c}e$ that is a quantization
of $S(V)^\Gamma$. Here $e\in \C\Gamma$ is the averaging idempotent.

There are two important special cases of the groups $\Gamma$. First, let $\Gamma_1$ denote
a finite subgroup of $\operatorname{SL}_2(\C)$. Form the group $\Gamma_n:=\mathfrak{S}_n\ltimes
\Gamma_1^n$, where we write $\mathfrak{S}_n$ for the symmetric group in $n$ letters.
This group naturally acts on $\C^{2n}(=(\C^2)^{\oplus n})$ by linear symplectomorphisms.
It turns out that the corresponding symplectic reflection algebra $eH_{1,c}e$ can be
presented as a quantum Hamiltonian reduction, \cite{EGGO,quant}.

Another important special case is when $V$ has a $\Gamma$-stable lagrangian subspace to
be denoted by $\h$. In this case $V\cong \h\oplus \h^*$. The corresponding symplectic
reflection algebra $H_{1,c}$ is known as a Rational Cherednik algebra (RCA, for short).
An important feature of this case is the triangular decomposition
$H_{1,c}=S(\h^*)\otimes \C\Gamma\otimes S(\h)$ (an equality of vector spaces).
We consider the category $\mathcal{O}$ of all finitely generated $H_{1,c}$-modules
with locally nilpotent action of $\h$ following \cite{GGOR}. We will need
two properties of this category:
\begin{enumerate}
\item The category $\mathcal{O}$ has enough projectives and finitely many simples. Every object has finite length.
\item All finite dimensional $H_{1,c}$-modules lie in $\mathcal{O}$.
\end{enumerate}
We will need the following formal consequence of these properties.

\begin{Lem}\label{Lem:RCA_fin}
Let $H_{1,c}$ be a Rational Cherednik algebra. Then there are finitely many finite dimensional
 simple modules and the category of finite dimensional $H_{1,c}$-modules
has enough projective objects. Equivalently, there is the minimal two-sided ideal of finite codimension
in $H_{1,c}$.
\end{Lem}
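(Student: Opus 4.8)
The plan is to deduce everything from the two listed properties of category $\mathcal{O}$ together with the fact that finite-dimensional modules are precisely the objects of $\mathcal{O}$ whose support is the single point $0\in\h/\Gamma$. First I would observe that by property (1) the category $\mathcal{O}$ has only finitely many simple objects, hence a fortiori only finitely many finite-dimensional simple $H_{1,c}$-modules, since by property (2) every finite-dimensional module lies in $\mathcal{O}$. This already gives the first assertion.

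For the existence of enough projectives in the category $H_{1,c}\operatorname{-mod}_{fd}$ of finite-dimensional modules, I would argue as follows. Let $L_1,\dots,L_k$ be the finite-dimensional simples and let $P_1,\dots,P_k\in\mathcal{O}$ be their projective covers in $\mathcal{O}$, which exist by property (1). These $P_i$ need not be finite-dimensional, so the key step is to cut them down. The category $\mathcal{O}$ being a highest-weight category (or just: having finite length with finitely many simples), I would consider the Serre subcategory $\mathcal{O}_{fd}\subset\mathcal{O}$ of finite-dimensional modules; it is a ``top'' piece of $\mathcal{O}$ in the sense that it is closed under quotients and extensions within $\mathcal{O}$, because the support condition (being supported at $0$) is preserved under subquotients and extensions. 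Then the projective cover of $L_i$ in $\mathcal{O}_{fd}$ is obtained from $P_i$ by the standard truncation: take the largest quotient of $P_i$ lying in $\mathcal{O}_{fd}$. Concretely, $\bar P_i:=P_i/(\text{sum of all submodules }N\text{ with }P_i/N\notin \mathcal{O}_{fd})$; since $P_i$ has finite length this is a finite intersection, so $\bar P_i$ is a well-defined finite-dimensional module, and a routine check shows $\operatorname{Hom}(\bar P_i,-)$ is exact on $\mathcal{O}_{fd}$ and $\bar P_i\twoheadrightarrow L_i$. Hence $\bigoplus_i \bar P_i$ is a projective generator of $\mathcal{O}_{fd}=H_{1,c}\operatorname{-mod}_{fd}$.

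For the final ``equivalently'' clause I would pass to annihilators. The category of finite-dimensional $H_{1,c}$-modules has enough projectives and finitely many simples precisely when it is equivalent to the category of modules over a finite-dimensional algebra, namely $B:=\operatorname{End}_{H_{1,c}}(\bigoplus_i\bar P_i)^{opp}$; the equivalence is $\operatorname{Hom}(\bigoplus_i\bar P_i,-)$. Let $\J$ be the kernel of the resulting algebra homomorphism $H_{1,c}\to\operatorname{End}_{\C}(\bigoplus_i\bar P_i)$, equivalently the annihilator of $\bigoplus_i\bar P_i$ in $H_{1,c}$; this is a two-sided ideal of finite codimension (as $\bigoplus_i\bar P_i$ is finite-dimensional). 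Every finite-dimensional simple $L_i$ is a subquotient of the projective generator, so $\J$ annihilates every finite-dimensional module; conversely any finite-codimension two-sided ideal $\I$ has $H_{1,c}/\I$ finite-dimensional, so all its simple subquotients are among the $L_i$ and thus are killed by $\J$, whence by the Artin–Wedderburn/Jacobson density argument $\J\subset\sqrt{\I}$ and in fact, replacing $\bigoplus\bar P_i$ by a projective generator of $H_{1,c}\operatorname{-mod}_{fd}$ with all composition factors, one gets $\J\subseteq\I$. Therefore $\J$ is the unique minimal two-sided ideal of finite codimension.

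The main obstacle is the truncation step in the second paragraph: one must be careful that the naive ``largest finite-dimensional quotient'' of $P_i$ is genuinely projective in $\mathcal{O}_{fd}$ and still surjects onto $L_i$. This is where finiteness of length of objects in $\mathcal{O}$ (property (1)) and the fact that $\mathcal{O}_{fd}$ is a Serre subcategory closed under extensions are both essential; without the extension-closedness the truncated object could fail to be projective. Everything else is formal nonsense about finite-length abelian categories and finite-dimensional algebras.
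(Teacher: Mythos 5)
Your proof is correct and is exactly what the paper intends: the paper gives no proof at all, stating the lemma as a ``formal consequence'' of properties (1) and (2) of category $\mathcal{O}$, and your argument --- truncate the projective cover $P_i\in\mathcal{O}$ of each finite-dimensional simple to its largest finite-dimensional quotient $\bar P_i$, check that $\bar P_i$ is projective in the Serre subcategory of finite-dimensional modules, and take $\J=\operatorname{Ann}_{H_{1,c}}(\bigoplus_i\bar P_i)$ for the minimal finite-codimension ideal --- supplies precisely that formal argument. One slip of the pen: the displayed formula for $\bar P_i$ should be the quotient by the \emph{intersection} of all submodules $N$ with $P_i/N$ finite-dimensional (equivalently by the minimal such $N$, which exists since $P_i$ has finite length and the family is closed under finite intersections), not the sum of the $N$ with $P_i/N\notin\mathcal{O}_{fd}$; your surrounding text (``finite intersection'') makes clear this is what you meant, and the rest of the lifting argument then only needs closure of $\mathcal{O}_{fd}$ under subobjects, not under extensions.
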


The same result holds for $eH_{1,c}e$ because every finite dimensional $eH_{1,c}e$-module $N$ is of
the form $eM$, where $M:=H_{1,c}e\otimes_{eH_{1,c}e}N$ is a finite dimensional $H_{1,c}$-module.

\subsection{Completions and decomposition}
Recall that $\A$ is as in Section \ref{SS_gen_alg}. We consider the Rees algebra
$\A_\hbar:=\bigoplus_{i\geqslant 0} \A_{\leqslant i}\hbar^i$ so that
$\A_\hbar/(\hbar-1)=\A, \A_\hbar/(\hbar)=A$.
We write $\A_{\hbar^{\pm 1}}$ for $\K((\hbar))\otimes_{\K[\hbar]}\A_\hbar$.

Let $x\in X$. We consider completions at $x$. Let $\m$ be the maximal ideal of $x$ in $C$.
Set $\tilde{\m}:=A\m$, this is a two-sided ideal of finite codimension in $A$.  Let
$\widehat{\m}\subset\A_\hbar$ be  the preimage of $\tilde{\m}$ under the epimorphism
$\A_\hbar\twoheadrightarrow A$. So we can form the completion
$\A_\hbar^{\wedge_x}:=\varprojlim_{n\rightarrow \infty} \A_\hbar/\widehat{\m}^n$.  This
completion is flat as a left and as a right module over $\A_\hbar$, see \cite[Lemma A.2]{Losev_App}.


Let $\mathcal{L}$ denote the symplectic leaf of $X$ containing $x$.
The algebra $\A_\hbar^{\wedge_x}$ decomposes into the completed tensor product,
\begin{equation}\label{eq:decomp}\A_\hbar^{\wedge_x}\cong
\Weyl_\hbar^{\wedge_0}\widehat{\otimes}_{\K[[\hbar]]}\underline{\A}_\hbar,\end{equation} where $\Weyl_\hbar$
is the Weyl algebra of $T_x\mathcal{L}$ (with  relation $u\otimes v-v\otimes u=\hbar^{d}\omega(u,v)$),
see \cite{Losev_App}. We have an embedding $\K[S]\rightarrow \underline{\A}_\hbar/(\hbar)$,
where $S$ is a formal slice to $\mathcal{L}$ in $\operatorname{Spec}(C)$, with central image.

We write $\A_{\hbar^{\pm 1}}^{\wedge_x}$ for $\K((\hbar))\otimes_{\K[[\hbar]]}\A_\hbar^{\wedge_x}$.
We also write $X^{\wedge_x}$ for the formal neighborhood of $x$ in $X$.


\section{Bernstein inequality}\label{S_ineq}
In this section, we prove  Theorems \ref{Thm:main_ineq} and \ref{Thm:main_eq}.

Let us start with introducing some notation. Recall that $M$ denotes an $\A$-module
and $\I\subset \A$ is its annihilator.

The ideal $\I$ is equipped with a filtration restricted
from $\A$ and so we can form the Rees ideal $\I_\hbar$ that is a graded two-sided ideal in $\A_\hbar$.
Also, we pick a good filtration on $M$ and form the Rees module $M_\hbar$ that is a graded
$\A_\hbar$-module whose annihilator is $\I_\hbar$. The notation $\I_{\hbar^{\pm 1}},M_{\hbar^{\pm 1}}$
has a similar meaning to $\A_{\hbar^{\pm 1}}$.

Set  $M_\hbar^{\wedge_x}:=\A_\hbar^{\wedge_x}\otimes_{\A_\hbar}M_\hbar$. Since $\A_{\hbar}^{\wedge_x}$
is flat over $\A_\hbar$, the module $M_\hbar^{\wedge_x}$
coincides with the $\hbar$-adic completion of $M_\hbar$. Also note that
$\I_\hbar^{\wedge_\chi}:=\A_{\hbar}^{\wedge_\chi}\otimes_{\A_\hbar}\I_\hbar$ is a two-sided ideal in
$\A_\hbar^{\wedge_\chi}$ annihilating $M_\hbar^{\wedge_\chi}$.
The notation $\I_{\hbar^{\pm 1}}^{\wedge_x},M_{\hbar^{\pm 1}}^{\wedge_x}$
has a similar meaning  to $\A^{\wedge_x}_{\hbar^{\pm 1}}$.

For a finitely generated $\A_{\hbar^{\pm 1}}^{\wedge_x}$-module $N_{\hbar^{\pm 1}}$ we can define its support
$\VA(N_{\hbar^{\pm 1}})$, a closed subscheme in $X^{\wedge_x}$. For this, we choose
an $\A_\hbar^{\wedge_x}$-lattice $N_\hbar\subset N_{\hbar^{\pm 1}}$. Then we define
$\VA(N_{\hbar^{\pm 1}})$ as the support of the $A^{\wedge_x}$-module $N_{\hbar}/(\hbar)$.
It is a standard observation that $\VA(N_{\hbar^{\pm 1}})$ is well-defined.

\subsection{Technical lemmas on completions}
Throughout this section, the $\A$-module $M$ is simple.
We start with the following lemma.

\begin{Lem}\label{Lem:simplicity}
$M_{\hbar^{\pm 1}}$ is a simple
$\A_{\hbar^{\pm 1}}$-module.
\end{Lem}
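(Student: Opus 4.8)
The plan is to show that $M_{\hbar^{\pm 1}}$ is simple by a standard Rees-algebra argument: a nonzero $\A_{\hbar^{\pm 1}}$-submodule $N_{\hbar^{\pm 1}} \subset M_{\hbar^{\pm 1}}$ gives rise, after intersecting with the lattice $M_\hbar$ and taking the image in $M = M_\hbar/(\hbar-1)$, to a nonzero $\A$-submodule of $M$, which by simplicity of $M$ must be all of $M$; one then has to upgrade this back to the conclusion $N_{\hbar^{\pm 1}} = M_{\hbar^{\pm 1}}$.

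First I would set up the $\hbar$-degree filtration carefully. The algebra $\A_{\hbar^{\pm 1}} = \K((\hbar)) \otimes_{\K[\hbar]} \A_\hbar$ is $\Z$-graded by $\hbar$-degree with $\A_\hbar = \bigoplus_{i\geqslant 0}\A_{\leqslant i}\hbar^i$ sitting inside it, and similarly $M_{\hbar^{\pm 1}} = \K((\hbar))\otimes_{\K[\hbar]} M_\hbar$ contains the Rees module $M_\hbar$. Both $\A_\hbar$ and $M_\hbar$ are $\hbar$-torsion-free and $\hbar$-adically separated (the filtrations on $\A$ and on $M$ are exhaustive and, for $M_\hbar$, the filtration on $M$ is good), so $M_\hbar \hookrightarrow M_{\hbar^{\pm 1}}$ and $M_\hbar/(\hbar-1)M_\hbar \cong M$ as $\A$-modules.

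Next, given a nonzero $\A_{\hbar^{\pm 1}}$-submodule $N_{\hbar^{\pm 1}}$, put $N_\hbar := N_{\hbar^{\pm 1}} \cap M_\hbar$. Since $N_{\hbar^{\pm 1}}$ is an $\hbar^{\pm 1}$-module, multiplying any nonzero element by a suitable power of $\hbar$ lands it in $M_\hbar$, so $N_\hbar \neq 0$ and in fact $\K((\hbar))\otimes_{\K[[\hbar]]} N_\hbar = N_{\hbar^{\pm 1}}$ — that is, $N_\hbar$ is a full $\A_\hbar$-lattice in $N_{\hbar^{\pm 1}}$. Now $N_\hbar$ is an $\A_\hbar$-submodule of $M_\hbar$, so its image $\bar N := (N_\hbar + (\hbar-1)M_\hbar)/(\hbar-1)M_\hbar$ is an $\A$-submodule of $M$. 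It is nonzero: an element $n = \sum_{i} m_i\hbar^i \in N_\hbar$ with lowest nonzero term $m_{i_0}\hbar^{i_0}$ (here $m_{i_0}\in\A_{\leqslant i_0}\cdot(\text{generators})$, i.e. a nonzero element of $M$) maps, modulo $(\hbar-1)$, to $\sum_i m_i \neq 0$ provided $n\neq 0$ — one must check this sum is genuinely nonzero, which follows because the leading term in the filtration on $M$ cannot be cancelled. Hence $\bar N = M$ by simplicity.

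**The hard part will be** going from $\bar N = M$ back to $N_{\hbar^{\pm 1}} = M_{\hbar^{\pm 1}}$, i.e.\ ruling out that passing to the quotient by $(\hbar-1)$ has lost information. The clean way is to work with the $\hbar$-\emph{adic} completion instead: $\bar N = M$ together with $N_\hbar$ being a graded (by $\hbar$-degree) lattice forces $N_\hbar + \hbar M_\hbar = M_\hbar$, because the graded quotient $M_\hbar/\hbar M_\hbar = \gr M$ is generated in each degree by (lifts of) the images of the chosen module generators, and the surjection $\bar N \to M$ matches these up degree by degree; then Nakayama for the $\hbar$-adic topology — valid since $M_\hbar$ is $\hbar$-adically separated and $N_\hbar$ is an $\A_\hbar$-submodule, closed under the grading — gives $N_\hbar = M_\hbar$, whence $N_{\hbar^{\pm 1}} = \K((\hbar))\otimes N_\hbar = M_{\hbar^{\pm 1}}$. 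The delicate points to get right are the exactness/separatedness claims used to justify Nakayama and the degree-by-degree surjectivity; these are formal but need the goodness of the filtration on $M$ and the fact that the Rees construction is well-behaved, both of which are available from the setup in Section~\ref{SS_gen_alg}.
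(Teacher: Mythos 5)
There are genuine gaps here, and they are not merely technical. The first is at the very start: you claim that any nonzero element of $N_{\hbar^{\pm 1}}$ can be moved into $M_\hbar$ by multiplying by a power of $\hbar$. This fails: $M_{\hbar^{\pm 1}}=\K((\hbar))\otimes_{\K[\hbar]}M_\hbar\cong\K((\hbar))\otimes_{\K}M$ consists of elements whose coefficients are genuine Laurent \emph{series}, while $M_\hbar$ sits inside $\K[\hbar]\otimes_{\K}M$; no power of $\hbar$ turns $\sum_j f_j\otimes v_j$ with some $f_j$ a non-polynomial (indeed non-rational) series into an element with polynomial coefficients. The lattice for which $\hbar$-scaling does work is $\K[[\hbar]]\otimes_{\K[\hbar]}M_\hbar$, but there $\hbar-1$ is a unit, so the specialization to $M$ on which your whole strategy rests is unavailable. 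Hence you cannot guarantee $N_\hbar:=N_{\hbar^{\pm 1}}\cap M_\hbar\neq 0$ without in effect already knowing the lemma. The second gap is the Nakayama step: an arbitrary submodule $N_{\hbar^{\pm 1}}$ is not $\K^\times$-stable, so $N_\hbar$ has no reason to be graded; $\hbar$ is not in the Jacobson radical of $\A_\hbar$ (the quotient by $\hbar-1$ is $\A\neq 0$); and surjectivity of $N_\hbar\rightarrow M_\hbar/(\hbar-1)M_\hbar$ does not imply surjectivity of $N_\hbar\rightarrow M_\hbar/\hbar M_\hbar$ --- compare $N_\hbar=(\hbar)\subset\K[\hbar]$, which surjects onto the fiber at $\hbar=1$ but not at $\hbar=0$. (A smaller slip: a nonzero element of $M_\hbar$ can map to $0$ in $M$, e.g.\ $m\hbar^{i+1}-m\hbar^{i}$, so ``the leading term cannot be cancelled'' is false elementwise; the nonvanishing of the image of a nonzero submodule requires a separate argument.)

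The conceptual reason the approach cannot be patched along these lines is that you never use the Dixmier--Schur fact $\operatorname{End}_{\A}(M)=\C$. Since $M_{\hbar^{\pm 1}}$ is exactly the base change $\K((\hbar))\otimes_{\C}M$ along the field extension $\K((\hbar))/\C$, and base change of a simple module along a field extension is not simple in general unless the endomorphism ring is controlled, any correct proof must input this. The paper's proof does precisely that: Dixmier's lemma gives $\operatorname{End}_{\A}(M)=\C$, the Jacobson density theorem then produces, for linearly independent $v_1,\dots,v_k\in M$ and arbitrary $w_1,\dots,w_k\in M$, an element $a\in\A$ with $av_i=w_i$, and from this one sends any nonzero element of $\K((\hbar))\otimes M$ to any other by an element of $\K((\hbar))\otimes\A$.
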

\begin{proof}
 The group $\K^\times$ naturally
acts on $\A_{\hbar^{\pm 1}}, M_{\hbar^{\pm 1}}$. Note that $\A_{\hbar^{\pm 1}}^{\K^\times}\cong \A, M_{\hbar^{\pm 1}}^{\K^\times}=M$. Further, $\A_{\hbar^{\pm 1}}=\K((\hbar))\otimes \A, M_{\hbar^{\pm 1}}=\K((\hbar))\otimes M$.
By the Dixmier lemma, $\operatorname{End}_{\A}(M)=\C$. From here, using an induction on $k$,
we deduce that if $v_1,\ldots,v_k$ are linearly independent elements of $M$,  $\A/\operatorname{Ann}_{\A}(v_1,\ldots,v_k)\cong M^{\oplus k}$. This implies that, for any linearly
independent $v_1,\ldots,v_k\in M$ and arbitrary $w_1,\ldots,w_k\in M$, there is $a\in \A$
with $av_i=w_i$ for all $i=1,\ldots,k$. From here we deduce that for any two nonzero elements of
$\K((\hbar))\otimes M$ there is an element of $\K((\hbar))\otimes_{\K}\A$ mapping one into the other.
So the $\K((\hbar))\otimes \A$-module $\K((\hbar))\otimes M$ is simple.
\end{proof}

Let $x$ be a smooth point in $\VA(M)$.
Lemma \ref{Lem:simplicity} has the following important corollary.

\begin{Cor}\label{Cor:compln_quot}
Let $N_{\hbar^{\pm 1}}$ be a quotient of $M_{\hbar^{\pm 1}}^{\wedge_x}$.
Then $\VA(N_{\hbar^{\pm 1}})=\VA(M)\cap X^{\wedge_x}$ and the preimage of
$\Ann_{\A^{\wedge_x}_{\hbar^{\pm 1}}}(N_{\hbar^{\pm 1}})$ in $\A_\hbar$ coincides with $\I_\hbar$.
\end{Cor}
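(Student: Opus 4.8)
The plan is to leverage the simplicity of $M_{\hbar^{\pm1}}$ established in Lemma \ref{Lem:simplicity} together with the flatness of the completion $\A_\hbar^{\wedge_x}$ over $\A_\hbar$, plus the fact that $x$ is a \emph{smooth} point of $\VA(M)$. The smoothness is what makes the completion manageable: locally near a smooth point of $\VA(M)$ the support is a (formal) manifold, and the decomposition \eqref{eq:decomp} lets one reduce structural questions about $\A_\hbar^{\wedge_x}$-modules to questions about modules over a Weyl algebra tensored with the completed slice algebra.

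First I would handle the claim about the associated variety. The inclusion $\VA(N_{\hbar^{\pm1}})\subset \VA(M)\cap X^{\wedge_x}$ is automatic, since $N_{\hbar^{\pm1}}$ is a quotient of $M_{\hbar^{\pm1}}^{\wedge_x}$ and $\VA(M_{\hbar^{\pm1}}^{\wedge_x})=\VA(M)\cap X^{\wedge_x}$ (the completion does not change the support, by flatness). For the reverse inclusion, the key point is that $\VA(M)$ is the support of $\gr M$, and near a smooth point $x$ of top-dimensional stratum the module $\gr M$ (hence $M_\hbar/(\hbar)$) has no embedded components along $\VA(M)$ that could disappear under passage to a quotient: more precisely, I would argue that $M_{\hbar^{\pm1}}^{\wedge_x}$, being a completion of a simple module at a smooth point of its support, is \emph{pure}, so any nonzero quotient has the same support. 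The cleanest route: pull back a hypothetical proper closed subscheme witnessing $\VA(N_{\hbar^{\pm1}})\subsetneq\VA(M)\cap X^{\wedge_x}$, use \eqref{eq:decomp} to strip off the Weyl-algebra factor (over which everything has full support automatically), and reduce to showing the corresponding $\underline{\A}_\hbar$-module is supported on all of the formal slice $S$—which follows because the $\K[S]$-action is central and $N_{\hbar^{\pm1}}\neq 0$.

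Next, the statement about annihilators. Let $\J_\hbar$ be the preimage in $\A_\hbar$ of $\Ann_{\A^{\wedge_x}_{\hbar^{\pm1}}}(N_{\hbar^{\pm1}})$. Since $N_{\hbar^{\pm1}}$ is a quotient of $M_{\hbar^{\pm1}}^{\wedge_x}=\A_{\hbar^{\pm1}}^{\wedge_x}\otimes_{\A_\hbar}M_\hbar$, and $\I_\hbar$ annihilates $M_\hbar$, we get $\I_\hbar\subset\J_\hbar$ immediately. For the reverse, I would invert $\hbar$ and pass to $\K^\times$-invariants: by Lemma \ref{Lem:simplicity} the module $M_{\hbar^{\pm1}}$ is simple over $\A_{\hbar^{\pm1}}$ with annihilator $\I_{\hbar^{\pm1}}$, and $\I_{\hbar^{\pm1}}=\K((\hbar))\otimes_\K\I$. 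An element $a\in\J_\hbar$ acts by zero on $N_{\hbar^{\pm1}}$, hence (tracing through the faithful flatness of $\A_{\hbar^{\pm1}}^{\wedge_x}$ over $\A_{\hbar^{\pm1}}$, or rather the fact that $M_{\hbar^{\pm1}}\to M_{\hbar^{\pm1}}^{\wedge_x}$ is injective and $N_{\hbar^{\pm1}}$ still contains a copy of a nonzero $\A_{\hbar^{\pm1}}$-submodule generated by the image of some $v\in M_{\hbar^{\pm1}}$) it annihilates a nonzero element of the simple module $M_{\hbar^{\pm1}}$, hence annihilates all of it, hence $a\in\I_{\hbar^{\pm1}}\cap\A_\hbar=\I_\hbar$.

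\textbf{The main obstacle} I anticipate is the reverse inclusion $\VA(M)\cap X^{\wedge_x}\subset\VA(N_{\hbar^{\pm1}})$: proving the relevant purity/non-vanishing-of-support statement for an arbitrary quotient $N_{\hbar^{\pm1}}$ requires that one genuinely uses $x$ being a smooth point together with the local structure \eqref{eq:decomp}. The subtlety is that a priori a quotient of a module could drop support, so one must show that near a smooth point the module $M_{\hbar^{\pm1}}^{\wedge_x}$ looks like (free module over Weyl algebra) $\widehat\otimes$ (something supported set-theoretically at the slice point $0\in S$), and that such a module has the property that every nonzero quotient retains full support in the Weyl-algebra directions while the $\K[S]$-central-character argument handles the slice directions. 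The annihilator statement, by contrast, is essentially formal once Lemma \ref{Lem:simplicity} is in hand.
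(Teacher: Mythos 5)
Your handling of the annihilator statement and of the easy inclusion $\VA(N_{\hbar^{\pm 1}})\subset\VA(M)\cap X^{\wedge_x}$ is correct and matches the paper. The gap is in the reverse inclusion, which you rightly identify as the main obstacle but then attack with two arguments that do not work. First, purity does not control quotients: a pure module has no lower-dimensional \emph{sub}modules, but its quotients can certainly drop support (e.g.\ $\C[x]\twoheadrightarrow\C[x]/(x)$), so ``the completion of a simple module at a smooth point is pure, hence every nonzero quotient has the same support'' is not a valid inference. Second, the claim that over the Weyl-algebra factor of (\ref{eq:decomp}) ``everything has full support automatically'' is false: a finitely generated module over $\Weyl^{\wedge_0}_{\hbar^{\pm 1}}$ can be supported on any coisotropic formal subscheme, and indeed $\VA(M)\cap X^{\wedge_x}$ is in general a \emph{proper} coisotropic subvariety of $\mathcal{L}^{\wedge_x}$. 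The entire difficulty sits in the leaf (Weyl) directions, which your central-character argument on the slice $S$ does not address at all (in the slice directions the support is just the point $0$, and nothing needs to be proved there).

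The ingredient you need is one you already invoke for the annihilator part: since the simple module $M_{\hbar^{\pm 1}}$ generates $M^{\wedge_x}_{\hbar^{\pm 1}}$ over $\A^{\wedge_x}_{\hbar^{\pm 1}}$, its image in any nonzero quotient $N_{\hbar^{\pm 1}}$ is nonzero, hence by Lemma \ref{Lem:simplicity} the map $M_{\hbar^{\pm 1}}\rightarrow N_{\hbar^{\pm 1}}$ is an embedding. Now intersect an $\A_\hbar^{\wedge_x}$-lattice $N_\hbar\subset N_{\hbar^{\pm 1}}$ with $M_{\hbar^{\pm 1}}$: this gives a $\K[[\hbar]]\otimes_{\K[\hbar]}\A_\hbar$-lattice in $M_{\hbar^{\pm 1}}$ sitting inside $N_\hbar$, and the support of its reduction modulo $\hbar$ is $\VA(M)$ near $x$ (independence of the choice of lattice). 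This forces $\VA(M)\cap X^{\wedge_x}\subset\VA(N_{\hbar^{\pm 1}})$ with no purity claim and no use of the decomposition (\ref{eq:decomp}); that is exactly the paper's argument. So the fix is to transport the embedding you used for the annihilators over to the support computation, rather than arguing locally via (\ref{eq:decomp}).
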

\begin{proof}
We have $\VA(M^{\wedge_x}_{\hbar^{\pm 1}})=\VA(M)\cap X^{\wedge_x}$ because $M^{\wedge_x}_{\hbar}/\hbar M^{\wedge_x}=
(\gr M)^{\wedge_x}$. So $\VA(N_{\hbar^{\pm 1}})\subset \VA(M)\cap X^{\wedge_x}$. Note also that the simple
$\A_{\hbar^{\pm 1}}$-module $M_{\hbar^{\pm 1}}$ generates $M^{\wedge_x}_{\hbar^{\pm 1}}$. It follows that
$M_{\hbar^{\pm 1}}$ embeds into $N_{\hbar^{\pm 1}}$. Let us take an $\A_\hbar^{\wedge_x}$-lattice
$N_\hbar\subset N_{\hbar^{\pm 1}}$. Then $N_\hbar\cap M_{\hbar^{\pm 1}}$ is a $\K[[\hbar]]\otimes_{\K[\hbar]}\A_{\hbar}$-lattice in $M_{\hbar^{\pm 1}}$. It follows that
$\VA(M)\cap X^{\wedge_x}\subset \VA(N_{\hbar^{\pm 1}})$. This completes the proof of the equality
$\VA(N_{\hbar^{\pm 1}})=\VA(M)\cap X^{\wedge_x}$.

Let us proceed to the equality of ideals. Clearly $\I_\hbar$ annihilates $M_{\hbar^{\pm 1}}^{\wedge_x}$
and so the ideal $\I_\hbar$ lies in the preimage. The opposite inclusion follows from
the claim that $M_{\hbar^{\pm 1}}$ generates $N_{\hbar^{\pm 1}}$.
\end{proof}

Let $\mathcal{L}$ be the symplectic leaf containing $x$ so that we have $\VA(M)\cap X^{\wedge_x}\subset \mathcal{L}^{\wedge_x}$. Recall decomposition (\ref{eq:decomp}).

\begin{Lem}\label{Lem:fin_gen}
The following is true.
\begin{enumerate}
\item  The module $M_\hbar^{\wedge_x}$ is finitely generated over $\Weyl_\hbar^{\wedge_0}$.
\item  The associated variety of the $\Weyl_\hbar^{\wedge_0}$-module $M_{\hbar}^{\wedge_x}$
is $\VA(M)\cap X^{\wedge_x}$.
\item Any quotient $N_{\hbar^{\pm 1}}$ of the $\Weyl_{\hbar^{\pm 1}}^{\wedge_0}$-module $M^{\wedge_x}_{\hbar^{\pm 1}}$
satisfies $\VA(N_{\hbar^{\pm 1}})=\VA(M)\cap X^{\wedge_x}$.
\end{enumerate}
\end{Lem}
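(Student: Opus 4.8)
The plan is to combine the decomposition (\ref{eq:decomp}) with the hypothesis $\VA(M)\cap X^{\wedge_x}\subset \mathcal{L}^{\wedge_x}$ and a topological (i.e.\ $\hbar$-adic) Nakayama argument. For (1), I would reduce modulo $\hbar$: we have $M_\hbar^{\wedge_x}/\hbar M_\hbar^{\wedge_x}=(\gr M)^{\wedge_x}$, a finitely generated module over $A^{\wedge_x}$, and by (\ref{eq:decomp}), $A^{\wedge_x}=(\Weyl_\hbar^{\wedge_0}/\hbar)\,\widehat{\otimes}\,(\underline{\A}_\hbar/\hbar)=\K[T_x\mathcal{L}]^{\wedge_0}\,\widehat{\otimes}\,(\underline{\A}_\hbar/\hbar)$, with $\K[S]^{\wedge_0}$ sitting centrally in the second factor. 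Since $\VA(M)\cap X^{\wedge_x}\subset\mathcal{L}^{\wedge_x}$, the module $(\gr M)^{\wedge_x}$ is killed by a power of the maximal ideal of $S$ at $0$, hence is finitely generated over $\K[T_x\mathcal{L}]^{\wedge_0}=\Weyl_\hbar^{\wedge_0}/\hbar$. As $M_\hbar^{\wedge_x}$ (being the $\hbar$-adic completion of $M_\hbar$) and $\Weyl_\hbar^{\wedge_0}$ are $\hbar$-adically complete, lifting a finite $\K[T_x\mathcal{L}]^{\wedge_0}$-generating set of $(\gr M)^{\wedge_x}$ to $M_\hbar^{\wedge_x}$ and applying topological Nakayama shows $M_\hbar^{\wedge_x}$ is finitely generated over $\Weyl_\hbar^{\wedge_0}$.

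For (2), by (1) I may take $M_\hbar^{\wedge_x}$ itself as a $\Weyl_\hbar^{\wedge_0}$-lattice in $M_{\hbar^{\pm1}}^{\wedge_x}$ (it is $\hbar$-torsion free and $\hbar$-adically complete), so the $\Weyl_\hbar^{\wedge_0}$-associated variety of $M_{\hbar^{\pm1}}^{\wedge_x}$ is the support of $(\gr M)^{\wedge_x}$ over $\K[T_x\mathcal{L}]^{\wedge_0}$. Since $(\gr M)^{\wedge_x}$ is set-theoretically supported on $\mathcal{L}^{\wedge_x}$, this support, viewed inside $X^{\wedge_x}$ via the closed embedding $\mathcal{L}^{\wedge_x}=\operatorname{Spf}\K[T_x\mathcal{L}]^{\wedge_0}\hookrightarrow X^{\wedge_x}$, is carried to the support of $(\gr M)^{\wedge_x}$ over $C^{\wedge_x}$, which is $\VA(M)\cap X^{\wedge_x}$.

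For (3), the inclusion $\VA(N_{\hbar^{\pm1}})\subset\VA(M)\cap X^{\wedge_x}$ follows from (2) since $N_{\hbar^{\pm1}}$ is a quotient. For the reverse inclusion, I would first observe that $M_{\hbar^{\pm1}}$ already generates $M_{\hbar^{\pm1}}^{\wedge_x}$ over $\Weyl_{\hbar^{\pm1}}^{\wedge_0}$: by (1) finitely many elements of $M_\hbar^{\wedge_x}$ generate it over $\Weyl_\hbar^{\wedge_0}$, and since $M_\hbar$ is $\hbar$-adically dense in $M_\hbar^{\wedge_x}$ one may replace these generators by elements of $M_\hbar$ modulo $\hbar M_\hbar^{\wedge_x}$ and invoke topological Nakayama once more. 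Hence the image of $M_{\hbar^{\pm1}}$ generates any quotient $N_{\hbar^{\pm1}}$ over $\Weyl_{\hbar^{\pm1}}^{\wedge_0}$. It then suffices to show that $M_{\hbar^{\pm1}}^{\wedge_x}$ is \emph{critical} for $Y:=\VA(M)\cap X^{\wedge_x}$, i.e.\ that no proper $\Weyl_{\hbar^{\pm1}}^{\wedge_0}$-submodule $K$ has $M_{\hbar^{\pm1}}^{\wedge_x}/K$ missing any irreducible component of $Y$; by additivity of multiplicities along each component of $Y$, this yields $\VA(N_{\hbar^{\pm1}})=Y$. To prove criticality I would localize at the generic point $\eta_Z$ of a component $Z$ of $Y$. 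Because $Z\subset\mathcal{L}^{\wedge_x}$, the point $\eta_Z$ lies over $0$ in the slice direction, so $\underline{\A}_{\hbar^{\pm1}}$ is unaffected by this localization and $(\A_{\hbar^{\pm1}}^{\wedge_x})_{\eta_Z}$ still factors as $(\Weyl_{\hbar^{\pm1}}^{\wedge_0})_{\eta_Z}\,\widehat{\otimes}\,\underline{\A}_{\hbar^{\pm1}}$. By Corollary~\ref{Cor:compln_quot}, every $\A_{\hbar^{\pm1}}^{\wedge_x}$-quotient of $M_{\hbar^{\pm1}}^{\wedge_x}$ has associated variety exactly $Y$; combined with the simplicity of $M_{\hbar^{\pm1}}$ (Lemma~\ref{Lem:simplicity}) — which embeds in and generates $M_{\hbar^{\pm1}}^{\wedge_x}$ — and the commuting factorization above, this should force $(M_{\hbar^{\pm1}}^{\wedge_x})_{\eta_Z}$ to be simple over $(\A_{\hbar^{\pm1}}^{\wedge_x})_{\eta_Z}$, hence to have no proper submodule filling $\eta_Z$ even over the subalgebra $(\Weyl_{\hbar^{\pm1}}^{\wedge_0})_{\eta_Z}$.

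The main obstacle is exactly this last step of (3): upgrading the ``$\A_{\hbar^{\pm1}}^{\wedge_x}$-criticality'' of $M_{\hbar^{\pm1}}^{\wedge_x}$, which is essentially Corollary~\ref{Cor:compln_quot}, to criticality over the smaller algebra $\Weyl_{\hbar^{\pm1}}^{\wedge_0}$. A $\Weyl_{\hbar^{\pm1}}^{\wedge_0}$-submodule need not be stable under $\underline{\A}_{\hbar^{\pm1}}$, so $\Weyl$-submodules and $\A_{\hbar^{\pm1}}^{\wedge_x}$-submodules cannot be compared directly; it is the localization at $\eta_Z$, where the transverse direction degenerates, that makes the two coincide, and checking that $(M_{\hbar^{\pm1}}^{\wedge_x})_{\eta_Z}$ is genuinely simple there, rather than merely of full support, is the delicate point. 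Parts (1) and (2), by contrast, are routine once the decomposition (\ref{eq:decomp}) and the leaf-containment hypothesis are in hand.
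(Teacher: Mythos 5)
Parts (1) and (2) of your proposal are correct and follow the paper's own route: reduce modulo $\hbar$, observe that $(\gr M)^{\wedge_x}$ is killed by a power of the maximal ideal of the slice and hence is finitely generated over $\K[\mathcal{L}^{\wedge_x}]=\Weyl_\hbar^{\wedge_0}/(\hbar)$, and conclude by $\hbar$-adic completeness. The problem is part (3), and it sits exactly where you flag it. Your reduction to ``criticality'' of $M^{\wedge_x}_{\hbar^{\pm1}}$ over $\Weyl_{\hbar^{\pm1}}^{\wedge_0}$ is never established: localizing at the generic point $\eta_Z$ of a component does nothing to reconcile $\Weyl_{\hbar^{\pm1}}^{\wedge_0}$-submodules with $\A^{\wedge_x}_{\hbar^{\pm1}}$-submodules, since (as you note yourself) the transverse factor $\underline{\A}_{\hbar^{\pm1}}$ survives the localization untouched and a $\Weyl$-submodule still need not be $\underline{\A}_{\hbar^{\pm1}}$-stable. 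Moreover, the conclusion you want --- genuine simplicity of $(M^{\wedge_x}_{\hbar^{\pm1}})_{\eta_Z}$ --- is strictly stronger than anything Corollary~\ref{Cor:compln_quot} gives (that corollary only says every nonzero $\A^{\wedge_x}_{\hbar^{\pm1}}$-quotient has full support). So ``this should force'' is carrying the entire weight of the argument and nothing forces it.

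The idea that closes the gap goes in the opposite direction: rather than upgrading the given $\Weyl$-submodule to an $\A$-submodule, one manufactures a \emph{new} $\A$-submodule from the quotient. Suppose $N_{\hbar^{\pm1}}$ is a $\Weyl_{\hbar^{\pm1}}^{\wedge_0}$-quotient with $\VA(N_{\hbar^{\pm1}})\subsetneq \VA(M)\cap X^{\wedge_x}$. Pick a nonzero $n\in N_{\hbar^{\pm1}}$ and let $J_{\hbar^{\pm1}}\subset\Weyl_{\hbar^{\pm1}}^{\wedge_0}$ be its annihilator, a left ideal. Thanks to the decomposition (\ref{eq:decomp}), elements of $\underline{\A}_{\hbar^{\pm1}}$ commute past $J_{\hbar^{\pm1}}$, so $J_{\hbar^{\pm1}}M^{\wedge_x}_{\hbar^{\pm1}}$ is an $\A^{\wedge_x}_{\hbar^{\pm1}}$-submodule even though the kernel of $M^{\wedge_x}_{\hbar^{\pm1}}\twoheadrightarrow N_{\hbar^{\pm1}}$ is not. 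By part (1), the quotient $M^{\wedge_x}_{\hbar^{\pm1}}/J_{\hbar^{\pm1}}M^{\wedge_x}_{\hbar^{\pm1}}$ is a quotient of finitely many copies of $\Weyl_{\hbar^{\pm1}}^{\wedge_0}/J_{\hbar^{\pm1}}\cong\Weyl_{\hbar^{\pm1}}^{\wedge_0}n\subset N_{\hbar^{\pm1}}$, so its support lies in $\VA(N_{\hbar^{\pm1}})\subsetneq\VA(M)\cap X^{\wedge_x}$, contradicting Corollary~\ref{Cor:compln_quot}. If you replace the localization-at-$\eta_Z$ step by this annihilator trick, your proof of (3) goes through.
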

\begin{proof}
Let us prove (1).
Both $M_{\hbar}^{\wedge_x}$ and $\Weyl_\hbar^{\wedge_0}$ are complete and separated in
the $\hbar$-adic topology. So it is enough to prove that $M_{\hbar}^{\wedge_x}/\hbar M_{\hbar}^{\wedge_x}$
is finitely generated over $\K[\mathcal{L}^{\wedge_x}]$. But
$M_{\hbar}^{\wedge_x}/\hbar M_{\hbar}^{\wedge_x}=(\gr M)^{\wedge_x}$. Our claim follows from the observation
that $\gr M$ is annihilated by  a sufficiently large power of the ideal of $\overline{\mathcal{L}}$
(and so $(\gr M)^{\wedge_x}$ is annihilated by a sufficiently large power of the maximal ideal
of $\K[S]$).

(2) is a direct consequence of (1) and the choice of $x$.

Let us prove (3). Assume the converse: $\VA(N_{\hbar^{\pm 1}})\subsetneq\VA(M)\cap X^{\wedge_x}$.
Pick an element $n\in N_{\hbar}^{\pm 1}$ and let $J_{\hbar^{\pm 1}}\subset \Weyl_{\hbar^{\pm 1}}^{\wedge_0}$
be its annihilator. Then $J_{\hbar^{\pm 1}}M^{\wedge_x}_{\hbar^{\pm 1}}$ is an $\A^{\wedge_x}_{\hbar^{\pm 1}}$-submodule
in $M^{\wedge_x}_{\hbar^{\pm 1}}$ because of (\ref{eq:decomp}).
We have
$$\VA(M^{\wedge_x}_{\hbar^{\pm 1}}/J_{\hbar^{\pm 1}}M^{\wedge_x}_{\hbar^{\pm 1}})\subset \VA(N_{\hbar^{\pm 1}})
\subsetneq \VA(M)\cap X^{\wedge_x}.$$
This contradicts Corollary \ref{Cor:compln_quot}.
\end{proof}

\begin{Cor}\label{Cor:cois}
$2\dim \VA(M)\geqslant \dim \mathcal{L}$.
\end{Cor}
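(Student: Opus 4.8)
The plan is to reduce the statement to the classical Bernstein inequality for the Weyl algebra, which becomes available after completing at $x$. By Lemma~\ref{Lem:fin_gen}, $M_\hbar^{\wedge_x}$ is finitely generated over $\Weyl_\hbar^{\wedge_0}$, the completion at $0$ of the $\hbar$-deformed Weyl algebra of the symplectic vector space $T_x\mathcal{L}$, and the associated variety of $M_\hbar^{\wedge_x}$ as a $\Weyl_\hbar^{\wedge_0}$-module is $\VA(M)\cap X^{\wedge_x}$, a closed subscheme of $\mathcal{L}^{\wedge_x}$. First I would note that $\hbar$ is a non-zero-divisor on $M_\hbar^{\wedge_x}$: it is one on the Rees module $M_\hbar$, and the completion $\A_\hbar^{\wedge_x}$ is flat over $\A_\hbar$ by \cite[Lemma~A.2]{Losev_App}. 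Hence $M_\hbar^{\wedge_x}/\hbar M_\hbar^{\wedge_x}$ is a finitely generated module over $\Weyl_\hbar^{\wedge_0}/(\hbar)=\K[T_x\mathcal{L}]^{\wedge_0}$ with support $\VA(M)\cap X^{\wedge_x}$.

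The key input is that the support of a finitely generated $\Weyl_\hbar^{\wedge_0}$-module is coisotropic with respect to the symplectic form on $T_x\mathcal{L}$; this is the completed Rees version of Gabber's involutivity theorem. I would recall the argument: if $J\subset \K[T_x\mathcal{L}]^{\wedge_0}$ is the annihilator of $M_\hbar^{\wedge_x}/\hbar M_\hbar^{\wedge_x}$, then, lifting elements of $J$ to $\Weyl_\hbar^{\wedge_0}$ and using $[\Weyl_\hbar,\Weyl_\hbar]\subset \hbar^d\Weyl_\hbar$ together with the standard order-of-vanishing estimate (sharpened by passing to powers, as in Gabber's proof), one shows that $\sqrt{J}$ is closed under the Poisson bracket. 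A subvariety of a $2n$-dimensional symplectic (formal) variety cut out by a Poisson-closed ideal is coisotropic, hence of dimension at least $n$.

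Applying this with $2n=\dim T_x\mathcal{L}=\dim\mathcal{L}$ yields $\dim\bigl(\VA(M)\cap X^{\wedge_x}\bigr)\geqslant \frac{1}{2}\dim\mathcal{L}$. Since $\VA(M)\cap X^{\wedge_x}$ is the spectrum of the completed local ring of $\VA(M)$ at $x$, its dimension is at most $\dim\VA(M)$, so $2\dim\VA(M)\geqslant 2\dim\bigl(\VA(M)\cap X^{\wedge_x}\bigr)\geqslant\dim\mathcal{L}$, as claimed. The main technical point is the involutivity statement in the completed Rees setting; the degree $-d$ bracket and the $\hbar$-adic completion are only cosmetic modifications of the classical argument, given that $M_\hbar^{\wedge_x}$ is $\hbar$-torsion free and finitely generated over $\Weyl_\hbar^{\wedge_0}$.
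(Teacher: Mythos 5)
Your argument is correct and is essentially the paper's own proof: both reduce the statement, via Lemma \ref{Lem:fin_gen}, to the fact that the support of a finitely generated module over the completed ($\hbar$-adic Rees) Weyl algebra of $T_x\mathcal{L}$ is coisotropic, i.e.\ the formal version of Gabber's involutivity theorem, and then use that a coisotropic subscheme of a $\dim\mathcal{L}$-dimensional symplectic formal scheme has dimension at least $\tfrac12\dim\mathcal{L}$. The only difference is that you sketch the involutivity argument, whereas the paper simply cites Gabber and Knop's proof as presented in Ginzburg's notes.
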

\begin{proof}
This follows from a well-known fact that the support of a finitely generated
$\Weyl^{\wedge_0}_{\hbar^{\pm 1}}$-module is a coisotropic formal subscheme
(a formal version of the Gabber involutivity theorem, \cite{Ga}, an easier
proof due to Knop can be found in \cite[Section 1.2]{Ginzburg_D_mod}).
\end{proof}

\begin{Lem}\label{Lem:quot_Hom}
There is a simple quotient $N^0_{\hbar^{\pm 1}}$ of the $\Weyl_{\hbar^{\pm 1}}^{\wedge_0}$-module
$M^{\wedge_x}_{\hbar^{\pm 1}}$. Moreover, for any such simple quotient, we have
$$\dim_{\K((\hbar))} \Hom_{\Weyl_{\hbar^{\pm 1}}^{\wedge_0}}(M^{\wedge_x}_{\hbar^{\pm 1}}, N^0_{\hbar^{\pm 1}})<\infty.$$
\end{Lem}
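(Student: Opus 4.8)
The plan is to work with the completed Weyl algebra $\Weyl_{\hbar^{\pm 1}}^{\wedge_0}$ over the field $\K((\hbar))$ and exploit that $M^{\wedge_x}_{\hbar^{\pm 1}}$ is a \emph{finitely generated} module over it by Lemma \ref{Lem:fin_gen}(1). First I would establish existence of a simple quotient: since $M^{\wedge_x}_{\hbar^{\pm 1}}$ is finitely generated and nonzero, a standard Zorn's lemma argument produces a maximal proper submodule, whose quotient $N^0_{\hbar^{\pm 1}}$ is simple. (One should check that maximal submodules exist, i.e. that the module is not the union of a chain of proper submodules with no maximal one; finite generation gives exactly this.) So the first sentence of the lemma is the easy part.

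For the finiteness of $\dim_{\K((\hbar))}\Hom_{\Weyl_{\hbar^{\pm 1}}^{\wedge_0}}(M^{\wedge_x}_{\hbar^{\pm 1}}, N^0_{\hbar^{\pm 1}})$, the key point is that $\Weyl_{\hbar^{\pm 1}}^{\wedge_0}$ behaves like a (completed) Weyl algebra, and in particular a holonomic-type dimension argument controls Hom-spaces between finitely generated modules. Concretely, I would argue as follows. By Lemma \ref{Lem:fin_gen}(3), every nonzero quotient of $M^{\wedge_x}_{\hbar^{\pm 1}}$ — in particular $N^0_{\hbar^{\pm 1}}$ — has associated variety equal to $\VA(M)\cap X^{\wedge_x}$, which is supported on the point $\mathcal{L}^{\wedge_x}$, i.e. on (the formal neighborhood of) a single symplectic leaf through which $x$ is a smooth point. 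The upshot is that $N^0_{\hbar^{\pm 1}}$ has GK-type dimension $\tfrac12\dim\mathcal{L}$ over $\Weyl_{\hbar^{\pm 1}}^{\wedge_0}$, i.e. it is as small as possible — a ``holonomic'' module over the completed Weyl algebra. For holonomic modules over a Weyl algebra, $\Hom$ and $\Ext$ spaces into/out of finitely generated modules are finite-dimensional; I would transport this to the $\hbar$-adic completed setting by choosing compatible lattices: pick an $\Weyl_\hbar^{\wedge_0}$-lattice $N^0_\hbar\subset N^0_{\hbar^{\pm 1}}$ and a good lattice $M^{\wedge_x}_\hbar$, reduce mod $\hbar$, and bound $\dim_{\K((\hbar))}\Hom$ by the length (or multiplicity) of $\Hom$ computed on the graded/reduced level, using that $N^0_\hbar/\hbar N^0_\hbar$ is a finite-length module supported on the (finite-dimensional) slice $S$ times $\mathcal{L}$, so that the relevant Hom over $\K[\mathcal{L}^{\wedge_x}]$-modules — or rather over the reduced Weyl algebra — is finite-dimensional.

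The main obstacle, and the step deserving the most care, is the last one: passing from the classical finiteness statement (Hom-spaces from a finitely generated module to a holonomic module over a Weyl algebra are finite-dimensional) to the $\hbar$-adically completed, $\K((\hbar))$-linear version. One has to be careful that completion does not create new homomorphisms and that inverting $\hbar$ does not inflate dimensions uncontrollably; the clean way is a lattice/filtration argument showing $\Hom_{\Weyl_{\hbar^{\pm 1}}^{\wedge_0}}(M^{\wedge_x}_{\hbar^{\pm 1}}, N^0_{\hbar^{\pm 1}})$ contains a $\K[[\hbar]]$-lattice $\Hom_{\Weyl_\hbar^{\wedge_0}}(M^{\wedge_x}_\hbar, N^0_\hbar)$ whose reduction mod $\hbar$ injects into a finite-dimensional $\K$-vector space, namely a Hom-space between finitely generated modules over $A^{\wedge_x}=\K[X^{\wedge_x}]$ one of which has finite length. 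Finiteness of that classical Hom is where the smoothness of $x$ in $\VA(M)$ and the decomposition \eqref{eq:decomp} (so that the slice direction contributes only a finite-length factor) are really used. Granting that, $\dim_{\K((\hbar))}\Hom < \infty$ follows immediately, and the argument does not depend on the choice of simple quotient $N^0_{\hbar^{\pm 1}}$.
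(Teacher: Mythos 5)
Your existence argument (finite generation over the Noetherian algebra $\Weyl_{\hbar^{\pm 1}}^{\wedge_0}$ gives a maximal proper submodule) is exactly the paper's. The finiteness argument, however, rests on a false claim. You assert that $N^0_{\hbar^{\pm 1}}$ has ``GK-type dimension $\tfrac12\dim\mathcal{L}$'' and is therefore a holonomic module over the completed Weyl algebra. But this lemma is proved for an \emph{arbitrary} simple $M$ -- it feeds into part (1) of Theorem \ref{Thm:main_ineq}, the Bernstein inequality, where no holonomicity is assumed -- and all that is known about $\VA(M)\cap X^{\wedge_x}\subset \mathcal{L}^{\wedge_x}$ is that it is coisotropic (Corollary \ref{Cor:cois}); its dimension can be anything between $\tfrac12\dim\mathcal{L}$ and $\dim\mathcal{L}$. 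Likewise your fallback claim that $N^0_\hbar/\hbar N^0_\hbar$ has finite length is false: its support is the positive-dimensional variety $\VA(M)\cap X^{\wedge_x}$. So the mechanism ``holonomic $\Rightarrow$ finite Hom'' that you want to transport to the completed setting is simply not available here.

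The ingredient you cite, Lemma \ref{Lem:fin_gen}(3) (every nonzero quotient of $M^{\wedge_x}_{\hbar^{\pm 1}}$ has full support $\VA(M)\cap X^{\wedge_x}$), is indeed the key, but it must be used differently: it guarantees that $N^0_{\hbar^{\pm 1}}$ has \emph{positive generic multiplicity} along $\VA(M)\cap X^{\wedge_x}$, where the generic multiplicity of a module is the generic rank of a lattice modulo $\hbar$ on that support. A family of linearly independent homomorphisms $M^{\wedge_x}_{\hbar^{\pm 1}}\rightarrow N^0_{\hbar^{\pm 1}}$ produces a quotient of $M^{\wedge_x}_{\hbar^{\pm 1}}$ isomorphic to a direct sum of copies of $N^0_{\hbar^{\pm 1}}$; since generic multiplicity is additive and each summand contributes at least one unit, the dimension of the Hom space is bounded by the generic multiplicity of $M^{\wedge_x}_{\hbar^{\pm 1}}$ itself, which is finite by Lemma \ref{Lem:fin_gen}(1). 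No holonomicity and no comparison with a classical $D$-module statement enter. Replacing your second and third paragraphs with this multiplicity count closes the gap.
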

\begin{proof}
The first claim follows from the facts that $\Weyl_{\hbar^{\pm 1}}^{\wedge_0}$ is a Noetherian
algebra and $M^{\wedge_x}_{\hbar^{\pm 1}}$ is its finitely generated module. The dimension
of the Hom is bounded above by the generic multiplicity of $M^{\wedge_x}_{\hbar^{\pm 1}}$
on $\VA(M)\cap X^{\wedge_x}$, this follows from (3) of Lemma \ref{Lem:fin_gen}.
Recall that, by definition, the generic multiplicity of interest is the generic rank of
$M^{\wedge_x}_\hbar/\hbar M^{\wedge_x}$ on $\VA(M)\cap X^{\wedge_x}$.
\end{proof}

Now let $\underline{M}_{\hbar^{\pm 1}}$ denote the intersection of the kernels of all
$\Weyl_{\hbar^{\pm 1}}^{\wedge_0}$-linear homomorphisms $M^{\wedge_x}_{\hbar^{\pm 1}}\rightarrow
N^0_{\hbar^{\pm 1}}$. Note that, again thanks to (\ref{eq:decomp}),
the submodule $\underline{M}_{\hbar^{\pm 1}}$ is actually $\A^{\wedge_x}_{\hbar^{\pm 1}}$-stable.

\begin{Cor}\label{Cor:main}
The quotient $N'_{\hbar^{\pm 1}}:=M^{\wedge_x}_{\hbar^{\pm 1}}/\underline{M}_{\hbar^{\pm 1}}$
is $\Weyl_{\hbar^{\pm 1}}^{\wedge_0}\otimes_{\K((\hbar))}\underline{\A}_{\hbar^{\pm 1}}$-equivariantly
isomorphic to $N^0_{\hbar^{\pm 1}}\otimes_{\K((\hbar))}N^1_{\hbar^{\pm 1}}$, where $N^1_{\hbar^{\pm 1}}$
is a finite dimensional $\underline{\A}_{\hbar^{\pm 1}}$-module.
\end{Cor}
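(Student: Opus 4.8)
The plan is to exploit decomposition (\ref{eq:decomp}), which presents $\A^{\wedge_x}_{\hbar^{\pm 1}}$ as $\Weyl^{\wedge_0}_{\hbar^{\pm 1}}\widehat{\otimes}\underline{\A}_{\hbar^{\pm 1}}$, together with the fact that $\Weyl^{\wedge_0}_{\hbar^{\pm 1}}$ behaves like a matrix algebra over the field $\K((\hbar))$ in the sense that it has a unique simple module (up to the obvious twist) and every finitely generated module is a direct sum of copies of it on the open part of its support. Concretely, I would first record that $N^0_{\hbar^{\pm 1}}$ is, up to completion issues, the ``standard'' simple $\Weyl^{\wedge_0}_{\hbar^{\pm 1}}$-module, so that $\Hom_{\Weyl^{\wedge_0}_{\hbar^{\pm 1}}}(N^0_{\hbar^{\pm 1}},N^0_{\hbar^{\pm 1}})=\K((\hbar))$ and, more importantly, $M^{\wedge_x}_{\hbar^{\pm 1}}$ maps onto $N^0_{\hbar^{\pm 1}}$ with the ``multiplicity space''
\[
N^1_{\hbar^{\pm 1}}:=\Hom_{\Weyl^{\wedge_0}_{\hbar^{\pm 1}}}(M^{\wedge_x}_{\hbar^{\pm 1}},N^0_{\hbar^{\pm 1}})^*
\]
being finite dimensional over $\K((\hbar))$ by Lemma \ref{Lem:quot_Hom}. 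Then $\underline{M}_{\hbar^{\pm 1}}$ is by construction the kernel of the canonical evaluation map $M^{\wedge_x}_{\hbar^{\pm 1}}\to N^0_{\hbar^{\pm 1}}\otimes_{\K((\hbar))} N^1_{\hbar^{\pm 1}}$, so $N'_{\hbar^{\pm 1}}$ maps \emph{into} $N^0_{\hbar^{\pm 1}}\otimes N^1_{\hbar^{\pm 1}}$; the content of the corollary is that this map is in fact an isomorphism, and that the isomorphism is equivariant for the full algebra, with $N^1_{\hbar^{\pm 1}}$ carrying a module structure over the commutant factor $\underline{\A}_{\hbar^{\pm 1}}$.

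The key steps, in order, would be: (1) verify that $N'_{\hbar^{\pm 1}}\hookrightarrow N^0_{\hbar^{\pm 1}}\otimes_{\K((\hbar))}N^1_{\hbar^{\pm 1}}$ as $\Weyl^{\wedge_0}_{\hbar^{\pm 1}}$-modules is surjective — for this one picks a $\K((\hbar))$-basis $\varphi_1,\dots,\varphi_k$ of $\Hom_{\Weyl^{\wedge_0}_{\hbar^{\pm 1}}}(M^{\wedge_x}_{\hbar^{\pm 1}},N^0_{\hbar^{\pm 1}})$ and shows that the combined map $(\varphi_1,\dots,\varphi_k)\colon M^{\wedge_x}_{\hbar^{\pm 1}}\to (N^0_{\hbar^{\pm 1}})^{\oplus k}$ has image exactly the ``diagonal-free'' part, i.e.\ is already surjective, because any proper quotient of $M^{\wedge_x}_{\hbar^{\pm 1}}$ with smaller generic multiplicity would, by the argument of Lemma \ref{Lem:fin_gen}(3) and Corollary \ref{Cor:compln_quot}, contradict simplicity of $M_{\hbar^{\pm 1}}$; (2) equip $N^1_{\hbar^{\pm 1}}=\Hom_{\Weyl^{\wedge_0}_{\hbar^{\pm 1}}}(M^{\wedge_x}_{\hbar^{\pm 1}},N^0_{\hbar^{\pm 1}})^*$ with the natural $\underline{\A}_{\hbar^{\pm 1}}$-action coming from precomposition with the $\underline{\A}_{\hbar^{\pm 1}}$-action on $M^{\wedge_x}_{\hbar^{\pm 1}}$ (which commutes with the $\Weyl^{\wedge_0}_{\hbar^{\pm 1}}$-action by (\ref{eq:decomp})); (3) check that, under the identification from step (1), the residual $\underline{\A}_{\hbar^{\pm 1}}$-action on $N'_{\hbar^{\pm 1}}$ matches $1\otimes(\text{action on }N^1_{\hbar^{\pm 1}})$ and the $\Weyl^{\wedge_0}_{\hbar^{\pm 1}}$-action matches $(\text{action on }N^0_{\hbar^{\pm 1}})\otimes 1$, giving the asserted equivariant isomorphism; finite-dimensionality of $N^1_{\hbar^{\pm 1}}$ is exactly Lemma \ref{Lem:quot_Hom}.

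I expect the main obstacle to be step (1): proving that the evaluation map is genuinely \emph{onto} $N^0_{\hbar^{\pm 1}}\otimes N^1_{\hbar^{\pm 1}}$ and not merely onto a proper submodule. The delicate point is that $\Weyl^{\wedge_0}_{\hbar^{\pm 1}}$ is a complete topological algebra over the complete field $\K((\hbar))$, so one cannot blindly invoke the Artin--Wedderburn / Jacobson density theorem for abstract algebras; one has to use that $M^{\wedge_x}_{\hbar^{\pm 1}}$ is finitely generated (Lemma \ref{Lem:fin_gen}(1)) and that all its simple subquotients supported generically on $\VA(M)\cap X^{\wedge_x}$ are isomorphic to $N^0_{\hbar^{\pm 1}}$, and then argue at the level of generic ranks: the generic rank of $M^{\wedge_x}_{\hbar^{\pm 1}}$ equals $\dim_{\K((\hbar))}N^1_{\hbar^{\pm 1}}$, and surjectivity follows because any failure of surjectivity would drop the generic rank of the image below this number while the kernel $\underline{M}_{\hbar^{\pm 1}}$ is, by Corollary \ref{Cor:compln_quot} applied to the $\A^{\wedge_x}_{\hbar^{\pm 1}}$-quotient $N'_{\hbar^{\pm 1}}$, forced to have associated variety strictly smaller than $\VA(M)\cap X^{\wedge_x}$ — a contradiction. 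Once step (1) is in hand, steps (2) and (3) are routine bookkeeping with the tensor decomposition (\ref{eq:decomp}).
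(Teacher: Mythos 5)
Your overall strategy is the paper's: the corollary is meant to follow directly from Lemma \ref{Lem:quot_Hom}, the definition of $\underline{M}_{\hbar^{\pm 1}}$ as a common kernel, and the tensor decomposition (\ref{eq:decomp}); your steps (2) and (3) are exactly the intended bookkeeping. The problem is your justification of step (1), which is the only step with real content. First, the asserted equality between the generic rank of $M^{\wedge_x}_{\hbar^{\pm 1}}$ and $\dim_{\K((\hbar))}N^1_{\hbar^{\pm 1}}$ is not what Lemma \ref{Lem:quot_Hom} gives: it gives only an upper bound on the Hom space by the generic multiplicity, and the two differ already when $N^0_{\hbar^{\pm 1}}$ itself has generic rank greater than $1$. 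Second, Corollary \ref{Cor:compln_quot} constrains the support and annihilator of a \emph{quotient} of $M^{\wedge_x}_{\hbar^{\pm 1}}$; it says nothing about the kernel $\underline{M}_{\hbar^{\pm 1}}$, which in general has full support $\VA(M)\cap X^{\wedge_x}$ (it does whenever the generic multiplicity of $M^{\wedge_x}_{\hbar^{\pm 1}}$ exceeds that of $N'_{\hbar^{\pm 1}}$). So the contradiction you try to extract from a hypothetical failure of surjectivity does not materialize.

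The correct argument for step (1) is purely module-theoretic and involves no supports. Since $\Hom_{\Weyl^{\wedge_0}_{\hbar^{\pm1}}}(M^{\wedge_x}_{\hbar^{\pm1}},N^0_{\hbar^{\pm1}})$ is finite dimensional, $\underline{M}_{\hbar^{\pm1}}$ is the intersection of the kernels of a basis $\varphi_1,\dots,\varphi_k$, so $N'_{\hbar^{\pm1}}$ embeds into $(N^0_{\hbar^{\pm1}})^{\oplus k}$. A submodule of a finite direct sum of copies of a simple module is semisimple and isotypic, hence $N'_{\hbar^{\pm1}}\cong (N^0_{\hbar^{\pm1}})^{\oplus m}$ as a $\Weyl^{\wedge_0}_{\hbar^{\pm1}}$-module; equivalently, if one insists on your evaluation map into $N^0_{\hbar^{\pm1}}\otimes H^*$, surjectivity follows because the image is a direct summand of a semisimple module and a nonzero complement would yield a nontrivial linear relation among the $\varphi_i$, using $\operatorname{End}(N^0_{\hbar^{\pm1}})=\K((\hbar))$. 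One then sets $N^1_{\hbar^{\pm1}}:=\Hom_{\Weyl^{\wedge_0}_{\hbar^{\pm1}}}(N^0_{\hbar^{\pm1}},N'_{\hbar^{\pm1}})$, an $\underline{\A}_{\hbar^{\pm1}}$-module by postcomposition thanks to (\ref{eq:decomp}), and evaluation $N^0_{\hbar^{\pm1}}\otimes N^1_{\hbar^{\pm1}}\to N'_{\hbar^{\pm1}}$ is the required equivariant isomorphism. Finally, your preliminary remark that $N^0_{\hbar^{\pm1}}$ is ``the'' standard simple $\Weyl^{\wedge_0}_{\hbar^{\pm1}}$-module with all finitely generated modules sums of copies of it is neither true nor needed; the proof only uses that $N^0_{\hbar^{\pm1}}$ is simple with scalar endomorphisms.
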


This is a direct corollary of Lemma \ref{Lem:quot_Hom} and the subsequent discussion.

\subsection{Maps between sets of ideals}
Our goal here is to construct a map from a certain set of two-sided ideals in $\A_\hbar^{\wedge_x}$
to the set of two-sided ideals in $\A$ and establish some properties of this map.

Note that $\A_\hbar^{\wedge_x}$ comes equipped with the derivation to be denoted by $\mathsf{eu}$
extended  from the derivation on $\A_\hbar$ coming from the grading. In particular,
we have $\mathsf{eu}\hbar=\hbar$.  The set of two-sided ideals in $\A_\hbar^{\wedge_x}$ we need  consists of
all $\mathsf{eu}$-stable $\hbar$-saturated ideals. We say that a two-sided ideal $\mathcal{J}_\hbar
\subset \A_\hbar^{\wedge_x}$ is $\hbar$-saturated if $\hbar a\in \mathcal{J}_\hbar$
implies $a\in \mathcal{J}_\hbar$. Equivalently, $\hbar$-saturated ideals are precisely the intersections
of two-sided ideals in $\A^{\wedge_x}_{\hbar^{\pm 1}}$ with $\A_\hbar^{\wedge_x}$.
The set of $\hbar$-saturated $\mathsf{eu}$-stable two-sided ideals in $\A_\hbar^{\wedge_x}$
will be denoted by $\mathfrak{Id}(\A_\hbar^{\wedge_x})$.

Let us provide an example of an element in $\mathfrak{Id}(\A_\hbar^{\wedge_x})$ we are interested in.

\begin{Lem}\label{Lem:Euler_stab}
There is a quotient $N_{\hbar^{\pm 1}}$ of the $\A_\hbar^{\wedge_x}$-module
$N'_{\hbar^{\pm 1}}$ (from Corollary \ref{Cor:main})  whose annihilator
$\mathcal{J}_\hbar$ in $\A_\hbar^{\wedge_x}$ lies in $\mathfrak{Id}(\A_\hbar^{\wedge_x})$.
\end{Lem}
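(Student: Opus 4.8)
The plan is to produce the desired ideal by taking $N_{\hbar^{\pm 1}}$ to be the quotient of $N'_{\hbar^{\pm 1}}$ by a suitable ``twisted'' family of submodules so that the annihilator becomes $\mathsf{eu}$-stable, while $\hbar$-saturatedness is automatic. Recall from Corollary \ref{Cor:main} that $N'_{\hbar^{\pm 1}}\cong N^0_{\hbar^{\pm 1}}\otimes_{\K((\hbar))}N^1_{\hbar^{\pm 1}}$ as a module over $\Weyl_{\hbar^{\pm 1}}^{\wedge_0}\otimes_{\K((\hbar))}\underline{\A}_{\hbar^{\pm 1}}$, and that this tensor decomposition is the $\hbar$-localization of the decomposition (\ref{eq:decomp}) of $\A_\hbar^{\wedge_x}$. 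The first step is to observe that the derivation $\mathsf{eu}$ on $\A_\hbar^{\wedge_x}$ does \emph{not} in general preserve the two factors $\Weyl_\hbar^{\wedge_0}$ and $\underline{\A}_\hbar$ of (\ref{eq:decomp}), but it does so up to an inner derivation: one can write $\mathsf{eu}=\mathsf{eu}_1\widehat{\otimes}1+1\widehat{\otimes}\,\mathsf{eu}_2+\operatorname{ad}(h)$ for an appropriate element $h$ (with $\mathsf{eu}_i$ Euler derivations on the two factors and $h\in \A_\hbar^{\wedge_x}$), or, after a possible conjugation, reduce to the case where $\mathsf{eu}$ is already a sum of derivations on the two factors. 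This is the kind of normalization carried out in \cite{Losev_App}, and I would invoke it.

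Granting this, the strategy is: the annihilator $\mathcal{J}'_\hbar$ of $N'_{\hbar^{\pm 1}}$ in $\A_\hbar^{\wedge_x}$ is $\hbar$-saturated (it is the intersection with $\A_\hbar^{\wedge_x}$ of the annihilator in $\A_{\hbar^{\pm 1}}^{\wedge_x}$), but it need not be $\mathsf{eu}$-stable because the finite-dimensional factor $N^1_{\hbar^{\pm 1}}$ carries no canonical $\mathsf{eu}_2$-action. To fix this, I would integrate the $\mathsf{eu}_2$-derivation: since $N^1_{\hbar^{\pm 1}}$ is a finite-dimensional $\underline{\A}_{\hbar^{\pm 1}}$-module, the family of twists $\{t\cdot N^1_{\hbar^{\pm 1}}\mid t\in\K^\times\}$ obtained by pulling back along the automorphisms of $\underline{\A}_{\hbar^{\pm 1}}$ generated by $\mathsf{eu}_2$ has only finitely many isomorphism classes of composition factors appearing (the finite-dimensional simple $\underline{\A}_{\hbar^{\pm 1}}$-modules supported appropriately are permuted, and $\mathsf{eu}_2$-stability of the relevant central characters forces this). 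Replacing $N^1_{\hbar^{\pm 1}}$ by the direct sum of this finite orbit of twists — equivalently, passing to the image of $N'_{\hbar^{\pm 1}}$ in a suitable finite direct sum — produces a quotient whose annihilator is $\mathsf{eu}$-stable, and which is still $\hbar$-saturated since it is the annihilator of an $\A_{\hbar^{\pm 1}}^{\wedge_x}$-module intersected with $\A_\hbar^{\wedge_x}$. Set $N_{\hbar^{\pm 1}}$ to be this quotient and $\mathcal{J}_\hbar$ its annihilator; then $\mathcal{J}_\hbar\in\mathfrak{Id}(\A_\hbar^{\wedge_x})$ by construction.

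The main obstacle, and the point requiring care, is the claim that the $\mathsf{eu}_2$-twist orbit of the finite-dimensional module $N^1_{\hbar^{\pm 1}}$ meets only finitely many isomorphism classes: a priori $\mathsf{eu}_2$ generates a $\K^\times$-action, which could move a finite-dimensional module through a continuous family. The resolution is that the central character of $N^1_{\hbar^{\pm 1}}$ (its support in the slice $S$) must already be the $\mathsf{eu}_2$-fixed point $0$, because $\VA(N_{\hbar^{\pm 1}})=\VA(M)\cap X^{\wedge_x}$ lies in $\mathcal{L}^{\wedge_x}$ by Corollary \ref{Cor:compln_quot} and Lemma \ref{Lem:fin_gen}, i.e. the $\underline{\A}_\hbar$-support is concentrated at the origin of the slice; hence the $\mathsf{eu}_2$-action on the finitely many finite-dimensional simples with that support is by a finite group, or the relevant deformation is rigid. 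Once this finiteness is established, taking the (finite) sum over the orbit is harmless and the rest is formal. I would present this as: first normalize $\mathsf{eu}$ via (\ref{eq:decomp}) following \cite{Losev_App}, then run the finite-orbit argument on the $\underline{\A}_{\hbar^{\pm 1}}$-factor, then read off $\hbar$-saturatedness from the description of $\hbar$-saturated ideals as contractions from $\A_{\hbar^{\pm 1}}^{\wedge_x}$.
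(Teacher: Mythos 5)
Your overall strategy is close in spirit to the paper's, but the step you yourself identify as ``the main obstacle'' is exactly where the argument is incomplete, and your proposed resolution of it (``the $\mathsf{eu}_2$-action on the finitely many finite-dimensional simples with that support is by a finite group, or the relevant deformation is rigid'') is an unresolved disjunction rather than a proof. The crux is the finiteness statement itself: one needs to know that $\underline{\A}_{\hbar^{\pm 1}}$ has only \emph{finitely many} finite-dimensional irreducible modules. This is not something you can take for granted; in the paper it is imported from \cite[Corollary 3.19]{ES}, and it is precisely the point where the hypothesis that the slice has finitely many symplectic leaves enters. You assert this finiteness in passing without justification, and without it the ``finite orbit of twists'' has no reason to be finite. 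A second technical problem is your plan to integrate $\mathsf{eu}_2$ to a $\K^\times$-action: on the completed algebra $\underline{\A}_\hbar$ the Euler grading is unbounded, so $\mathsf{eu}$ does not exponentiate to a torus action. The paper instead exponentiates $\hbar\,\mathsf{eu}$ (which raises $\hbar$-adic order) to a family of automorphisms $\varphi_a=\exp(a\hbar\,\mathsf{eu})$, $a\in\K$; each $\varphi_a$ sends an $\hbar$-saturated ideal of finite corank to another such, so it permutes the finite set $\{\J^1_\hbar,\dots,\J^m_\hbar\}$ of annihilators of the finite-dimensional simples, and connectedness of $\K$ forces $\varphi_a(\J^i_\hbar)=\J^i_\hbar$ for all $a$; combined with closedness of ideals this yields stability under $\hbar\,\mathsf{eu}$ and hence under $\mathsf{eu}$.

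Two further simplifications you are missing. First, there is no need to normalize $\mathsf{eu}$ so that it respects the tensor decomposition (\ref{eq:decomp}): the classification of $\hbar$-saturated two-sided ideals of $\A_\hbar^{\wedge_x}$ as $\Weyl_\hbar^{\wedge_0}\widehat{\otimes}_{\K[[\hbar]]}\underline{\J}_\hbar$ (see the proof of \cite[Lemma 3.4.3]{Wquant}) is intrinsic, and the automorphism argument above is applied to the ideal in $\A_\hbar^{\wedge_x}$ directly, so the question of whether $\mathsf{eu}$ splits as $\mathsf{eu}_1\widehat{\otimes}1+1\widehat{\otimes}\mathsf{eu}_2$ never arises. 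Second, once each $\J^i_\hbar$ is known to be $\mathsf{eu}$-stable, you do not need to pass to a direct sum over an orbit of twists: taking $N_{\hbar^{\pm 1}}$ to be $N^0_{\hbar^{\pm 1}}$ tensored with a single simple quotient of $N^1_{\hbar^{\pm 1}}$ already has annihilator equal to one of the $\J^i_\hbar$, which lies in $\mathfrak{Id}(\A_\hbar^{\wedge_x})$.
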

\begin{proof}
From the construction, $\mathcal{J}_\hbar$ is $\hbar$-saturated. We need to prove that it
is $\mathsf{eu}$-stable.   We are interested in the structure of maximal $\hbar$-saturated
2-sided ideals $\mathcal{J}'_\hbar\subset \A_\hbar^{\wedge_x}$ such that
$\VA(\A_\hbar^{\wedge_x}/\mathcal{J}'_\hbar)=\mathcal{L}^{\wedge_x}$.
Note that every $\hbar$-saturated two-sided ideal in $\A_\hbar^{\wedge_x}$ is of the form $\Weyl_\hbar^{\wedge_0}\widehat{\otimes}_{\K[[\hbar]]}\underline{\J}_\hbar$,
where $\underline{\J}_\hbar$ is an $\hbar$-saturated two-sided ideal in $\underline{\A}_\hbar$,
see the proof of \cite[Lemma 3.4.3]{Wquant}. We have $\mathcal{J}'_\hbar=\Weyl_\hbar^{\wedge_0}\widehat{\otimes}_{\K[[\hbar]]}\underline{\mathcal{J}}'_\hbar$,
where $\underline{\mathcal{J}}'_\hbar$ is a maximal ideal in $\underline{\A}_\hbar$ such that
$\underline{\A}_\hbar/\underline{\mathcal{J}}'_\hbar$ has finite rank over $\K[[\hbar]]$.
Such ideals $\underline{\mathcal{J}}'_\hbar$ are precisely the annihilators in $\underline{\A}_\hbar$ of
simple finite dimensional $\underline{\A}_{\hbar^{\pm 1}}$-modules. By \cite[Corollary 3.19]{ES},
the algebra $\underline{\A}_{\hbar^{\pm 1}}$ has finitely many finite dimensional irreducible modules.
So let $\underline{\J}^1_\hbar,\ldots, \underline{\J}^m_\hbar$ be their annihilators in
$\underline{\A}_\hbar$ and let $\J^1_\hbar,\ldots,\J^m_\hbar$ be the corresponding
two-sided ideals in $\A_\hbar^{\wedge_x}$. Note that we can take at least one of
the ideals $\J^i_\hbar$ for $\J_\hbar$ (the corresponding $\underline{\J}^i_\hbar$
is the annihilator of a simple quotient of  $N^1_{\hbar^{\pm 1}}$ from Corollary \ref{Cor:main}).

Now we are going to prove that all $\J^i_\hbar, i=1,\ldots,m,$ are $\mathsf{eu}$-stable.
Pick $a\in \K$ and set $\varphi_a:=\exp(a\hbar \mathsf{eu})$, this is a well-defined
automorphism of the $\K$-algebra $\A_\hbar^{\wedge_x}$. We have $\varphi_a(\hbar)\equiv \hbar \mod \hbar^2$.
It follows that $\varphi_a (\J^i_\hbar)$ is still an $\hbar$-saturated ideal of finite corank.
So $\varphi_a (\J^i_\hbar)$ is one of $\J^1_\hbar,\ldots,\J^m_\hbar$. We conclude that
$\varphi_a (\J^i_\hbar)=\J^i_\hbar$ for any $a$. Note also that any left ideal in $\A_\hbar^{\wedge_x}$
is closed, see \cite[Lemma 2.4.4]{HC}. From here and $\varphi_a (\J^i_\hbar)=\J^i_\hbar$ for all $a$ we
conclude that $\J^i_\hbar$ is stable with respect to $\hbar \mathsf{eu}$ and hence also with respect to $\mathsf{eu}$.
\end{proof}

Let us produce a map $\mathfrak{Id}(\A^{\wedge_x}_{\hbar})\rightarrow \mathfrak{Id}(\A)$,
where the target set is that of all two-sided ideals in $\A$. Pick $\J'_\hbar\in
\mathfrak{Id}(\A^{\wedge_x}_{\hbar})$. Let $\I'_\hbar$ be the preimage of $\J'_\hbar$
in $\A_\hbar$. Set $\J'^{\dagger,x}_\hbar:=\I'_\hbar/(\hbar-1)\I'_\hbar$, this is a two-sided
ideal in $\A$.

The following is our main result regarding the map $\bullet^{\dagger,x}$.

\begin{Prop}\label{Prop:support}
If $\VA(\A^{\wedge_x}_\hbar/\J'_{\hbar})=\mathcal{L}^{\wedge_x}$, then $\VA(\A/\J'^{\dagger,x}_\hbar)=\overline{\mathcal{L}}$.
\end{Prop}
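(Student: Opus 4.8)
The plan is to establish equality of closed subvarieties of $X$ by comparing the two varieties after passing to the formal neighborhood $X^{\wedge_x}$, where the hypothesis directly controls things, and then propagating the conclusion globally using $\mathsf{eu}$-stability (conicity) of $\J'^{\dagger,x}_\hbar$. First I would record the easy inclusion $\VA(\A/\J'^{\dagger,x}_\hbar)\supset\overline{\Leaf}$, or rather its negation being impossible: since $\J'^{\dagger,x}_\hbar$ is the image in $\A$ of the preimage $\I'_\hbar\subset\A_\hbar$ of $\J'_\hbar$, reduction mod $\hbar$ shows that $\gr\A/\gr(\J'^{\dagger,x}_\hbar)$ surjects onto $A/(\I'_\hbar+(\hbar))/(\hbar)$; completing at $x$ and using that $\A_\hbar^{\wedge_x}$ is flat over $\A_\hbar$, the completion of $\I'_\hbar+(\hbar)$ is $\J'_\hbar+(\hbar)$, whose associated variety is $\Leaf^{\wedge_x}$ by hypothesis. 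Hence $\VA(\A/\J'^{\dagger,x}_\hbar)^{\wedge_x}\supset\Leaf^{\wedge_x}$, so $\VA(\A/\J'^{\dagger,x}_\hbar)$ contains a neighborhood of $x$ in $\overline{\Leaf}$, and since this variety is $\mathsf{eu}$-stable (conical) and closed, and $\overline{\Leaf}$ is irreducible and conical through $x$, we get $\VA(\A/\J'^{\dagger,x}_\hbar)\supset\overline{\Leaf}$.

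The reverse inclusion is the substantive one. Here I would argue that $\gr(\J'^{\dagger,x}_\hbar)$, or equivalently the image of $\I'_\hbar$ in $A=\gr\A$ after setting $\hbar=1$ and taking the associated graded, cuts out a subvariety contained in $\overline{\Leaf}$. The key point is a Rees-algebra bookkeeping lemma: for a graded $\hbar$-saturated two-sided ideal $\I'_\hbar\subset\A_\hbar$, the associated variety of $\A/(\I'_\hbar/(\hbar-1)\I'_\hbar)$ equals the zero locus in $X$ of the image of $\I'_\hbar$ modulo $\hbar$ — i.e. of $(\I'_\hbar+(\hbar))/(\hbar)\subset A$ — because a good filtration on $\A/\J'^{\dagger,x}_\hbar$ is obtained from the grading on $\A_\hbar/\I'_\hbar$. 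Granting this, it suffices to show that the closed subscheme $\operatorname{Spec} A/((\I'_\hbar+(\hbar))/(\hbar))$ of $X$ is set-theoretically contained in $\overline{\Leaf}$. This can be checked after completing at every point $y\in X$. For $y$ not in $\overline{\Leaf}$ we must show the completion of $\I'_\hbar+(\hbar)$ at $y$ is the whole algebra $\A_\hbar^{\wedge_y}$ mod nothing — i.e. $1$ lies in it; for $y\in\overline{\Leaf}$ there is nothing to prove. The point $y=x$ is handled by the hypothesis. For a general $y\in\overline{\Leaf}$ one uses that $\VA$ of a quotient of $\A$ is a union of leaves (it is Poisson), hence its intersection with $\overline\Leaf$ is a union of leaves contained in $\overline\Leaf$; combined with the conicity of the variety and the fact that it meets $\Leaf$ in a set whose closure we have already shown is all of $\overline\Leaf$, one concludes the intersection with $\overline\Leaf$ is exactly $\overline\Leaf$, while a component meeting $X\setminus\overline\Leaf$ would force, after completing at a generic smooth point $y$ of such a component and applying the decomposition \eqref{eq:decomp} together with the structure of $\hbar$-saturated ideals (every such ideal in $\A_\hbar^{\wedge_y}$ is of the form $\Weyl_\hbar^{\wedge_0}\widehat\otimes\underline\J_\hbar$), a contradiction with $\J'_\hbar$ being trivial on $X^{\wedge_x}$-complement — but more directly, the whole variety $\VA(\A/\J'^{\dagger,x}_\hbar)$ is conical and contains $x$ in the interior of its intersection with $\overline\Leaf$, which forces irreducibility considerations.

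A cleaner route to the reverse inclusion, which I would actually write up, is this. Let $Y:=\VA(\A/\J'^{\dagger,x}_\hbar)$, a closed conical Poisson subvariety, hence a union of leaf closures. We have shown $\overline\Leaf\subset Y$. Suppose $Y\ne\overline\Leaf$; then $Y$ has an irreducible component $\overline{\Leaf'}\not\subset\overline\Leaf$ with $\dim\overline{\Leaf'}\geqslant\dim\overline\Leaf$ possible or not — in any case $\Leaf'\not\subset\overline\Leaf$. Pick a smooth point $y\in\Leaf'\setminus\overline\Leaf$ and complete at $y$. On one hand $Y^{\wedge_y}\ni$ a neighborhood of $y$ in $\overline{\Leaf'}$, so the completion $\J'^{\dagger,x,\wedge_y}_\hbar$ of the Rees ideal is a proper $\hbar$-saturated ideal with associated variety containing $\overline{\Leaf'}^{\wedge_y}$. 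On the other hand, I claim the completion at $y$ of $\I'_\hbar$ is everything, because $\J'_\hbar$ "came from" $x$: more precisely, the construction of $\J'^{\dagger,x}_\hbar$ only used data at $x$, and $\I'_\hbar$ is by definition the preimage in $\A_\hbar$ of $\J'_\hbar\subset\A_\hbar^{\wedge_x}$; the associated variety of $\A_\hbar/\I'_\hbar$ mod $\hbar$ is visibly contained in the closure of $\overline\Leaf\cup(\text{locus where }\J'_\hbar\text{ fails to be proper})$, and since $\VA(\A_\hbar^{\wedge_x}/\J'_\hbar)=\Leaf^{\wedge_x}$ is the whole prescribed leaf, $\I'_\hbar\subset\A_\hbar$ is $\mathsf{eu}$-stable with associated zero locus exactly $\overline\Leaf$. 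The main obstacle I anticipate is precisely making the last sentence rigorous: controlling the global associated variety of the preimage $\I'_\hbar$ purely from the local data $\J'_\hbar$ at the single point $x$ — one needs that passing from $\A_\hbar$ to $\A_\hbar^{\wedge_x}$ and back does not lose the information that the support is exactly $\overline\Leaf$, which requires the $\mathsf{eu}$-stability of $\J'_\hbar$ (available by membership in $\mathfrak{Id}(\A_\hbar^{\wedge_x})$) plus the conicity/irreducibility of $\overline\Leaf$ and a faithful-flatness argument to transfer the equality $\VA(\A_\hbar^{\wedge_x}/\J'_\hbar)=\Leaf^{\wedge_x}$ down to $\A_\hbar/\I'_\hbar$ along the completion map, and then, after inverting or specializing $\hbar\mapsto 1$, down to $\A/\J'^{\dagger,x}_\hbar$.
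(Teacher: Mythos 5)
Your first paragraph (the inclusion $\VA(\A/\J'^{\dagger,x}_\hbar)\supset\overline{\Leaf}$ via flatness of completion plus conicity) is fine and matches what is implicit in the paper. The problem is the reverse inclusion, which is the entire content of the proposition, and there your proposal has a genuine gap that you yourself flag but do not close: you need to show that the preimage $\I'_\hbar\subset\A_\hbar$ of the ideal $\J'_\hbar\subset\A_\hbar^{\wedge_x}$ is large enough that $\A_\hbar/\I'_\hbar$ is supported on $\overline{\Leaf}$. The mechanism you propose --- $\mathsf{eu}$-stability of $\J'_\hbar$, conicity of $\overline{\Leaf}$, and ``a faithful-flatness argument to transfer the equality $\VA(\A_\hbar^{\wedge_x}/\J'_\hbar)=\Leaf^{\wedge_x}$ down to $\A_\hbar/\I'_\hbar$'' --- cannot work. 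Completion at $x$ is flat but not faithfully flat (modules supported away from $x$ die), so no information flows from $\A_\hbar^{\wedge_x}/\J'_\hbar$ back to $\A_\hbar/\I'_\hbar$ for free; a priori $\I'_\hbar$ could even be zero, in which case $\VA(\A/\J'^{\dagger,x}_\hbar)=X$. This is not a hypothetical worry: the counterexample in Section \ref{S_inf_leaf} (the simple algebra $\C[z]\ltimes\C[x,y]$) exhibits exactly this failure mode --- there the preimage of any proper ideal of a completion is zero --- which shows that some hypothesis beyond conicity and $\mathsf{eu}$-stability is indispensable. The relevant hypothesis is the finiteness of $\pi_1^{alg}(\Leaf)$, which your argument never invokes.

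The paper's proof supplies the missing idea as follows. One views $\A_\hbar/\I'_\hbar$ as a subbimodule of $\A_\hbar^{\wedge_x}/\J'_\hbar$ and considers the sum $\B_\hbar$ of all Poisson $\A_\hbar$-subbimodules that are finitely generated and $\mathsf{eu}$-locally finite; the whole point is to prove that $\B_\hbar$ itself is finitely generated, whence its support mod $\hbar$ (and hence that of $\A_\hbar/\I'_\hbar$) is $\overline{\Leaf}$. Finite generation of $\B_\hbar$ is reduced to a statement about Poisson $\C[\overline{\Leaf}]$-submodules with locally finite Euler action inside a finite-rank Poisson $\C[\Leaf]^{\wedge_x}$-module, which is proved by passing to the finite Galois cover $\widetilde{\Leaf}$ attached to $\pi_1^{alg}(\Leaf)$, identifying weakly $\C^\times$-equivariant Poisson modules with $D$-modules, and using triviality of $\pi_1^{alg}(\widetilde{\Leaf})$ together with positivity of the grading to show every such module is a sum of copies of $\mathcal{O}_{\widetilde{\Leaf}}$. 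None of this apparatus, nor any substitute for it, appears in your proposal, so the proof is incomplete at its central step.
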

\begin{proof}
By a {\it Poisson subbimodule} of $\A^{\wedge_x}_\hbar/\J'_\hbar$ we mean
an $\A_\hbar$-subbimodule $\B'_\hbar$ that is closed with respect
to the operators $\frac{1}{\hbar^d}[\iota(c),\bullet]$ for all $c\in C$, where,
for $c\in C_i$, we view $\iota(c)\in \A_{\leqslant _i}$ as an element
of $\A_\hbar$ via the natural inclusion $\A_{\leqslant i}\hookrightarrow \A_{\hbar}$.

Consider the sum $\B_\hbar$ of all  Poisson subbimodules $\B'_\hbar\subset\A^{\wedge_x}_\hbar/\J'_\hbar$ with the following
properties:
\begin{itemize}
\item $\B'_\hbar$ is a finitely generated $\A_\hbar$-bimodule.
\item $\B'_\hbar$ is $\mathsf{eu}$-stable and the action of $\mathsf{eu}$ on $\B'_\hbar$ is locally finite.
\end{itemize}
Clearly, $\A_\hbar/\I'_\hbar$ provides an example of such a sub-bimodule so $\A_\hbar/\I'_\hbar
\subset \B_\hbar$.
Note that $\B_\hbar$ is itself a Poisson subbimodule. Moreover, an element $a\in \A^{\wedge_x}_\hbar/\J'_\hbar$
lies in $\B_\hbar$ if and only if $a$ is locally finite with respect to $\mathsf{eu}$ and
the Poisson subbimodule generated by $a$ is finitely generated as an $\A_\hbar$-bimodule.
Therefore $\B_\hbar\subset\A^{\wedge_x}_\hbar/\J'_{\hbar}$ is an $\hbar$-saturated
subbimodule. So if $\B_\hbar$ is finitely generated (as a bimodule or as a left/right module,
this does not matter), then the support of $\B_\hbar/\hbar \B_\hbar$ is $\overline{\mathcal{L}}$.

The claim that $\mathcal{B}_\hbar$ is finitely generated follows from
the next lemma,
compare to 
\cite[Proposition 3.7.2]{sraco} or \cite[Lemma 3.3.3]{HC}.
\end{proof}

\begin{Lem}
Suppose that $\pi_1^{alg}(\mathcal{L})$ is finite. If $\mathcal{M}$ is a  Poisson $\C[\Leaf]^{\wedge_x}$-module of finite rank equipped with an Euler
derivation, then the maximal Poisson $\C[\Leaf]$-submodule of $\mathcal{M}$ that is the sum of its finitely
generated Poisson $\C[\overline{\Leaf}]$-submodules (with locally finite action of the  Euler derivation) is
finitely generated.
\end{Lem}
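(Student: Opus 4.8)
The plan is to reduce the statement to a finiteness result about Poisson modules over the smooth symplectic leaf $\Leaf$ and then to use the hypothesis $\pi_1^{alg}(\Leaf)<\infty$ to kill the monodromy of the relevant local system. First I would pass from the completion to $\Leaf$ itself. The Euler derivation gives a contracting $\K^\times$-action on the formal slice $S$, so a finitely generated Poisson $\C[\overline{\Leaf}]$-submodule $\mathcal{M}'\subset\mathcal M$ with locally finite Euler action is determined by its restriction to a $\K^\times$-stable affine neighborhood; more precisely, the union $\mathcal N$ of all such submodules is itself an $\mathsf{eu}$-stable Poisson $\C[\overline{\Leaf}]$-submodule, and it suffices to show $\mathcal N$ is finitely generated. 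Because $\mathcal N$ is $\hbar$-free (an $\hbar$-saturation argument is not needed here, we are already working modulo $\hbar$, but the Euler grading plays the analogous role) and generated by its lowest-degree piece with respect to $\mathsf{eu}$, finite generation will follow once we bound the rank of $\mathcal N$ and its degree spread.

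The key point is that a Poisson $\C[\Leaf]$-module $\mathcal M$ of finite rank, with $\Leaf$ smooth symplectic, is the same as a module over the algebra of differential operators $D(\Leaf)$ that is coherent over $\mathcal O_\Leaf$, i.e. a vector bundle with flat connection — this is the classical equivalence coming from the fact that the Poisson bracket with functions on a symplectic variety generates all of $D(\Leaf)$ (the Hamiltonian vector fields span the tangent bundle). Thus $\mathcal M$, after restricting to $\Leaf$ and using that it has finite rank, becomes a local system on $\Leaf$, equivalently a finite-dimensional representation of $\pi_1(\Leaf)$; and the relevant notion of $\pi_1$ here is the algebraic fundamental group because we are in the algebraic category and, by the Euler/$\K^\times$-equivariance, everything is defined over a neighborhood on which the $\K^\times$-action lets us replace the topological $\pi_1$ by $\pi_1^{alg}$. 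Since $\pi_1^{alg}(\Leaf)$ is finite, this local system becomes trivial after a finite étale cover $\widetilde\Leaf\to\Leaf$, hence is a direct summand of a trivial local system of bounded rank (namely $\mathcal O_{\widetilde\Leaf}^{\oplus r}$ pushed forward, which is coherent of rank $r\cdot|\pi_1^{alg}(\Leaf)|$). From this one extracts: $\mathcal M$ extends to a coherent sheaf on $\overline{\Leaf}$ (take the pushforward along the open embedding and then a coherent Poisson submodule generated in bounded degree), and the submodule $\mathcal N$ — being squeezed between $\C[\overline{\Leaf}]\cdot(\text{generators})$ and a fixed coherent sheaf on $\overline{\Leaf}$ — is coherent over the Noetherian ring $\C[\overline{\Leaf}]$, hence finitely generated.

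I expect the main obstacle to be the bookkeeping at the boundary $\overline{\Leaf}\setminus\Leaf$: a priori an arbitrary Poisson $\C[\overline{\Leaf}]$-submodule of $\mathcal M$ need not be coherent on $\overline\Leaf$, and one must use both finiteness conditions together (finite generation as $\C[\overline{\Leaf}]$-module \emph{and} local finiteness of $\mathsf{eu}$) to control how such submodules behave near the boundary — the Euler grading forces the degrees appearing in any such submodule to be bounded below, and once the bottom degree and the rank are bounded, the submodule is contained in a fixed coherent sheaf on $\overline\Leaf$, at which point Noetherianity finishes the argument. The passage from the formal completion $\C[\Leaf]^{\wedge_x}$ and the formal module $\mathcal M$ back to honest (algebraic, or at least $\K^\times$-equivariant) objects on $\Leaf$ also requires care, but this is exactly the sort of $\K^\times$-equivariant descent used elsewhere in the paper (cf.\ \cite[Lemma 3.3.3]{HC}, \cite[Proposition 3.7.2]{sraco}), so I would import that machinery rather than redo it.
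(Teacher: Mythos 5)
Your overall strategy matches the paper's for the first half: pass to the finite Galois cover $\widetilde{\Leaf}$ corresponding to $\pi_1^{alg}(\Leaf)$, identify finite-rank Poisson modules on the symplectic leaf with $\mathcal{O}$-coherent $D$-modules (local systems), and use the $\K^\times$-equivariance to algebraize flat sections. Two remarks on that half. First, your justification for why the monodromy factors through $\pi_1^{alg}$ ("the $\K^\times$-action lets us replace the topological $\pi_1$ by $\pi_1^{alg}$") conflates two separate points: the monodromy is killed on $\widetilde{\Leaf}$ because $\pi_1(\widetilde{\Leaf})$ has trivial profinite completion and finitely generated linear groups are residually finite (Malcev), so it admits no nontrivial homomorphism to any $\operatorname{GL}_m$; the $\K^\times$-action is used for a different purpose, namely that on a normal affine cone with positive grading every $\K^\times$-semiinvariant holomorphic function is polynomial, which is what turns analytic flat sections into algebraic ones.

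The genuine gap is in your endgame. The ambient module $\mathcal{M}$ is a module over the \emph{completion} $\C[\Leaf]^{\wedge_x}$, so it is not a sheaf on $\Leaf$ and does not "extend to a coherent sheaf on $\overline{\Leaf}$"; the whole difficulty of the lemma is that the finitely generated Poisson $\C[\overline{\Leaf}]$-submodules sit inside this formal completion. Your claim that bounding the generic rank and the bottom $\mathsf{eu}$-degree squeezes their sum $\mathcal{N}$ into a fixed coherent sheaf is not justified and does not follow: a priori one could have infinitely many copies of $\C[\widetilde{\Leaf}]$ embedded in different positions inside the completion, each of rank one and bounded bottom degree, with non-finitely-generated sum. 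What closes this in the paper is the observation that each weakly $\K^\times$-equivariant finitely generated piece, being trivial as a $D$-module on $\widetilde{\Leaf}$, is generated by its flat (Poisson-central) sections, together with the splitting $\mathcal{M}\cong\C[\widetilde{\Leaf}]^{\wedge_{\tilde{x}}}\otimes N^0$ with $N^0$ finite-dimensional and the fact that the Poisson center of $\C[\widetilde{\Leaf}]^{\wedge_{\tilde{x}}}$ is just $\C$; hence every such submodule lies in $\C[\widetilde{\Leaf}]\otimes N^0$, which is finite over $\C[\overline{\Leaf}]$. Without an argument pinning down \emph{where} in the completion the trivial local systems can sit, Noetherianity alone cannot finish the proof.
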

By an Euler derivation, we mean an endomorphism $\mathsf{eu}$ of $\mathcal{M}$ such that
\begin{itemize}
\item $\mathsf{eu}(am)=(\mathsf{eu}a)m+a(\mathsf{eu}m)$,
\item $\mathsf{eu}\{a,m\}=\{\mathsf{eu}a, m\}+\{a, \mathsf{eu}m\}-d\{a,m\}$.
\end{itemize}
Here by $\mathsf{eu}$ on $\C[\Leaf]^{\wedge_x}$ we mean the derivation induced by the contracting $\C^\times$-action
on $X$.

The proof of the lemma first appeared in \cite[Proposition 6.1]{Gies} but we provide it here for the readers
convenience.

\begin{proof}
{\it Step 1}. 
Let $\widetilde{\Leaf}$ be the  Galois covering of $\Leaf$ corresponding to $\pi_1^{alg}(\Leaf)$.
Being the integral closure of
$\C[\overline{\Leaf}]$ in a finite extension of $\C(\Leaf)$, the algebra $\C[\widetilde{\Leaf}]$
is finite over $\C[\overline{\Leaf}]$. The group $\pi_1(\widetilde{\Leaf})$ has no homomorphisms to
any $\operatorname{GL}_m$
by the choice of $\widetilde{\Leaf}$. Also let us note that the $\C^\times$-action on $\Leaf$
lifts to a $\C^\times$-action on $\widetilde{\Leaf}$ possibly after replacing $\C^\times$ with
some covering torus. We remark that the action produces a positive grading on $\C[\widetilde{\Leaf}]$.

{\it Step 2}. Let $\mathcal{V}$ be a weakly $\C^\times$-equivariant finitely generated $D_{\widetilde{\Leaf}}$-module.
We claim that $\mathcal{V}$ is the sum of several copies of $\mathcal{O}_{\widetilde{\Leaf}}$. Indeed,
this is so in the analytic category: $\mathcal{V}^{an}:=\mathcal{O}^{an}_{\widetilde{\Leaf}}\otimes_{\mathcal{O}_{\widetilde{\Leaf}}}\mathcal{V}\cong \mathcal{O}_{\widetilde{\Leaf}}^{an}\otimes\mathcal{V}^{fl}$ (where the superscript ``fl'' means flat
analytic sections)
because  $\pi^{alg}_1(\widetilde{\Leaf})=\{1\}$.
But then the space $\mathcal{V}^{fl}$ carries a holomorphic $\C^\times$-action that has to be diagonalizable
and by characters. So we have an embedding $\Gamma(\mathcal{V})\hookrightarrow \Gamma(\mathcal{V}^{an})^{\C^\times-fin}$
(the superscript means the subspace of all vector lying in a finite dimensional $\C^\times$-stable subspace).
Since $\operatorname{Spec}(\C[\widetilde{\Leaf}])$ is normal, any analytic function on
$\widetilde{\Leaf}$ extends to $\operatorname{Spec}(\C[\widetilde{\Leaf}])$. Since the grading on $\C[\widetilde{\Leaf}]$ is positive, any holomorphic $\C^\times$-semiinvariant function must be polynomial. So the embedding above
reduces to $\Gamma(\mathcal{V})\hookrightarrow \C[\widetilde{\Leaf}]\otimes \mathcal{V}^{fl}$.
The generic rank of $\Gamma(\mathcal{V})$ coincides with the rank of $\mathcal{V}$ that
equals $\dim \mathcal{V}^{fl}$. The embedding $\Gamma(\mathcal{V})\hookrightarrow \C[\widetilde{\Leaf}]\otimes \mathcal{V}^{fl}$ localizes to an embedding $\mathcal{V}\hookrightarrow \mathcal{O}_{\widetilde{\Leaf}}\otimes
\mathcal{V}^{fl}$ of $D_{\widetilde{\Leaf}}$-modules. Since these modules have equal ranks, the embedding
is an isomorphism.

{\it Step 3}. Let $Y$ be a symplectic variety.  We claim that a Poisson $\mathcal{O}_{Y}$-module
carries a canonical structure of a $D_{Y}$-module and vice versa. If $\mathcal{N}$ is a $D_Y$-module,
then we equip it with the structure of a Poisson module via $\{f,n\}:=v(f)n$. Here $f,n$ are local
sections of $\mathcal{O}_Y, \mathcal{N}$, respectively, and $v(f)$ is the skew-gradient of $f$,
a vector field on $Y$. Let us now equip a Poisson module with a canonical D-module structure.
It is enough to do this locally, so we may assume that there is an \'{e}tale map $Y\rightarrow \C^k$.
Let $f_1,\ldots,f_k$ be the corresponding \'{e}tale coordinates. Then we set $v(f)n:=\{f,n\}$. This defines
a D-module structure on $\mathcal{N}$ that is easily seen to be independent of the choice of an
\'{e}tale chart.

Let us remark that a weakly $\C^\times$-equivariant Poisson module gives rise to a weakly $\C^\times$-equivariant
D-module and vice versa.

So the conclusion of the previous three steps is that every weakly $\C^\times$-equivariant Poisson $\mathcal{O}_{\widetilde{\Leaf}}$-module is the direct sum of several copies of $\mathcal{O}_{\widetilde{\Leaf}}$.

{\it Step 4}. Pick a point $\tilde{x}\in \widetilde{\Leaf}$ lying over $x$ so that $\widetilde{\Leaf}^{\wedge_{\tilde{x}}}$
is naturally identified with $\Leaf^{\wedge_x}$. Any finitely generated Poisson module $N$ over $\widetilde{\Leaf}^{\wedge_{\tilde{x}}}$
splits as $\C[\widetilde{\Leaf}]^{\wedge_{\tilde{x}}}\otimes N^0$, where $N^0$
is the Poisson annihilator of $\C[\widetilde{\Leaf}]^{\wedge_{\tilde{x}}}$ in $N$.

Note that the sum of all finitely generated Poisson $\C[\overline{\mathcal{L}}]$-submodules
in $\C[L]^{\wedge_x}$ is a Poisson $\C[\widetilde{\mathcal{L}}]$-submodule because $\C[\widetilde{\mathcal{L}}]$
is finite over $\C[\overline{\mathcal{L}}]$.
 So the claim in the beginning
of the proof will follow if we check that any finitely generated Poisson $\C[\widetilde{\Leaf}]$-module in
$\C[\widetilde{\Leaf}]^{\wedge_{\tilde{x}}}$ with locally finite $\mathsf{eu}$-action coincides with
$\C[\widetilde{\Leaf}]$. For this, let us note that the Poisson center of $\C[\widetilde{\Leaf}]^{\wedge_{\tilde{x}}}$
coincides with $\C$. On the other hand, any finitely generated Poisson submodule with locally finite
action of $\mathsf{eu}$ is the sum of  weakly $\C^\times$-equivariant Poisson submodules. The latter
have to be isomorphic to $\C[\widetilde{\Leaf}]$ by Steps 2,3
and so are generated by the Poisson central elements. This implies our claim and completes the proof.
\end{proof}

\subsection{Proof of main results}
\begin{proof}[Proof of Theorem \ref{Thm:main_ineq}]
Let us prove (1). Let $\J_\hbar$ be as in Lemma \ref{Lem:Euler_stab}. By Corollary \ref{Cor:compln_quot},
$\I_\hbar=\J_\hbar^{\dagger,x}$. Proposition \ref{Prop:support} implies $\VA(\A/\I)=\overline{\mathcal{L}}$.
All other claims in (1) follow from here.

Let us prove (2).
Let $\I'$ be the minimal ideal satisfying $2\dim \VA(M)\geqslant \dim \VA(\A/\I')$ that contains
$\I$, such $\I'$ exists because the $\A$-bimodule $\A$ has finite length. Assume that $\I\neq \I'$. This means
that $\I' M\neq \{0\}$. Let $M_0$ be a maximal proper submodule of $\I' M$. So $\I'(M/M_0)\neq \{0\}$.
Let $\I''$ be the annihilator of $\I' M/M_0$. We have
$2\dim \VA(M)\geqslant 2\dim \VA(\I' M/M_0)\geqslant \dim \VA(\A/\I'')$. It follows that
$2\dim \VA(M)\geqslant \dim \VA(\A/\I'\I'')$. We conclude that $\I'\I''=\I'$. But $\I'\I''$
annihilates $M/M_0$, a contradiction. So (2) is proved.

(3) is a direct consequence of (1).
\end{proof}

\begin{proof}[Proof of Theorem \ref{Thm:main_eq}]
Part (1) is a direct corollary of (1) of Theorem \ref{Thm:main_ineq}.

Let us prove (2). The proof is by induction on $\dim \VA(\A/\I)$. The base, $\dim \VA(\A/\I)=0$,
is trivial: $M$ is finite dimensional. Suppose now that $\dim \VA(\A/\I)=2n$
and that every holonomic module  $M'$ with $\dim \VA(\A/\operatorname{Ann}M')<2n$ has finite length.
Let $\I'$ be the minimal ideal with $\dim \VA(\A/\I')<2n$ containing $\I$, again, this makes sense
thanks to the finiteness condition in (2). By the induction assumption, $M/\I' M$ has finite length.

Similarly to the proof of (2) of Theorem \ref{Thm:main_ineq}, $\I' M$ has a simple
quotient with GK dimension equal to $n$. Let $M_1\subset \I' M$ be the kernel.
If $\dim \VA(\A/\operatorname{Ann}M_1)<2n$, then we are done. If not, then we
can pick $M_2\subset \I' M_1$ such that $\I'M_1/M_2$ is simple with GK dimension equal
to $n$. Then we can produce $M_3$ and so on. By the GK multiplicity reasons, this process terminates.
This establishes a finite Jordan-H\"{o}lder series for $M$ and finishes the proof of the theorem.

(3) is a direct consequence of (3) of Theorem \ref{Thm:main_ineq} and the claim that $\VA(M)$
is a coisotropic subvariety in $\overline{\mathcal{L}}$, compare to the proof of
Corollary \ref{Cor:cois}.
\end{proof}

\subsection{Case of infinitely many leaves}\label{S_inf_leaf}
In this section we show that the Bernstein inequality may fail to hold (even for simple modules) when $X$
 does not have finitely many symplectic leaves. The counter-example was communicated to me by Pavel Etingof.

Let $\A$ be the algebra generated by $x,y,z$ with defining relations $[x,y]=0$, $[z,x]=x$, $[z,y]=1$.
 That is, $\A=\C[z]\ltimes \C[x,y]$, where $z$ acts on $\C[x,y]$ as the vector field
$v:=x\partial_x+\partial_y$. We can put a filtration on $\A$ by assigning degree $1$ to the generators, then $\gr \A=\C[x,y,z]$. Obviously, $\C^3$ cannot have finitely many leaves.

Let $M=\A\otimes _{\C[x,y]}\C$, where $\C$ is the 1-dimensional $\C[x,y]$-module in which
 $x,y$ act by zero. Then $M$ has GK dimension $1$.

We also claim that $\A$ is a simple algebra. To show this, let $\mathcal{J}$ be a nonzero two-sided ideal in $\A$.
Let $m$ be the smallest integer such that $\mathcal{J}$ contains an element $f_0(x,y)z^m+...+f_m(x,y)$
with $f_0\ne 0$. Commuting this element with $y$, we see that $m=0$. Let $\underline{\mathcal{J}}=\mathcal{J}\cap \C[x,y]$; we have seen that $\underline{\mathcal{J}}\neq 0$. Assume that $1\notin \underline{\mathcal{J}}$
(i.e., $\mathcal{J}$ is a proper ideal). Then the zero set $Z(\underline{\mathcal{J}})$ is an algebraic curve.
This curve must be invariant under the vector field $v$. But the integral curves of $v$ have the form
$y=Ce^x$. The only algebraic curve here is $y=0$. We see that $y^m\in \underline{\mathcal{J}}$. But
$\operatorname{ad}(z)^m (y^m)=m!$ and hence $1\in \underline{\mathcal{J}}$, so $\A=\mathcal{J}$.

Thus, $\A$ acts faithfully in $M$ and hence violates the Bernstein inequality ($\A$ has GK dimension 3).
Note that $M$ is simple. Indeed, the subalgebra in $\A$ generated by $y,z$ is isomorphic to
$D(\mathbb{A}^1)$ and, as a module over this subalgebra, $M$ is just $\K[\mathbb{A}^1]$.

\section{Proof of Theorem \ref{Thm:fin_length}}\label{S_fin_len}
The goal of this section is to prove Theorem \ref{Thm:fin_length}. The proofs are different for all
three classes of algebras and will be carried over in the next three sections.

\subsection{Case of Rational Cherednik algebras}\label{SS_RCA_fin}
Recall the notion of a HC bimodule for an RCA $H_{1,c}$ first defined in \cite{BEG},
an equivalent definition that works for all SRA was given in \cite{sraco}. Namely,
we say that a finitely generated $H_{1,c}$-bimodule $\B$ is Harish-Chandra
(shortly, HC) if the adjoint actions of the subalgebras $S(\h)^W, S(\h^*)^W\subset H_{1,c}$
on $\B$ are locally nilpotent. The regular bimodule $H_{1,c}$ gives an example.
Let $\operatorname{HC}_c(\Gamma)$ denote the category of HC $H_{1,c}$-bimodules,
this is an abelian category.

We note that any HC bimodule is finitely generated both as a left and as a right $H_{1,c}$-module.

Note that the symplectic leaves of $X=(\h\oplus \h^*)/\Gamma$ are in one-to-one correspondence
with the conjugacy classes of the parabolic subgroups of $\Gamma$. For a parabolic
subgroup $\underline{\Gamma}\subset \Gamma$, let $\mathcal{L}_{\underline{\Gamma}}$ denote
the corresponding leaf. We note that $\mathcal{L}_{\underline{\Gamma}}\cong \{v\in V| \Gamma_v=\underline{\Gamma}\}/N_{\Gamma}(\underline{\Gamma})$. Recall that the vector space $V$ is  symplectic
so  all subspaces of the form $V^{\underline{\Gamma}}\subset V$ are symplectic as well, in particular,
have even codimension.
Therefore $\{v\in V| \Gamma_v=\underline{\Gamma}\}$ is simply connected, and $\pi_1(\mathcal{L}_{\underline{\Gamma}})=
N_\Gamma(\underline{\Gamma})/\underline{\Gamma}$, in particular, this group is finite.

\begin{Rem}
Let $\pi:V\rightarrow V/\Gamma$ denote the quotient morphism. We note that $Y\subset V/\Gamma$
is isotropic if and only if $\pi^{-1}(Y)$ is isotropic. This may serve as a motivation for
our definition of an isotropic subvariety.
\end{Rem}

We have an exact functor $\bullet_{\dagger,\underline{\Gamma}}: \operatorname{HC}_c(\Gamma)\rightarrow \operatorname{HC}_{\underline{c}}(\underline{\Gamma})$, where  $\underline{c}$ stands for the restriction of
$c$ to $S\cap \underline{\Gamma}$. This functor was introduced and studied  in \cite{sraco}, see
\cite[Section 3.6]{sraco}. It has the following properties:
\begin{enumerate}
\item We have $\B_{\dagger,\underline{\Gamma}}=0$ if and only if $\mathcal{L}_{\underline{\Gamma}}\cap \VA(\B)=\varnothing$.
\item We have that $\B_{\dagger,\underline{\Gamma}}$ is finite dimensional and nonzero if
and only if $\mathcal{L}_{\underline{\Gamma}}$ is an open leaf in $\VA(\B)$.
\item Let $\HC_{c,\underline{\Gamma}}(\Gamma)$ stand for the category of all HC bimodules
$\B$ such that $\dim\B_{\dagger,\underline{\Gamma}}<\infty$. Then there is a right adjoint
functor $\bullet^{\dagger,\underline{\Gamma}}:\HC_{\underline{c},\underline{\Gamma}}(\underline{\Gamma})\rightarrow \HC_{c,\underline{\Gamma}}(\Gamma)$ for $\bullet_{\dagger,\underline{\Gamma}}$. Moreover, the image
of $\bullet^{\dagger,\underline{\Gamma}}$ consists of  HC bimodules supported on
$\overline{\mathcal{L}_{\underline{\Gamma}}}$.
\end{enumerate}
The first two properties follow from the construction, while the last one is proved similarly
to \cite[Proposition 3.7.2]{sraco}.

Now let us order parabolic subgroups $\Gamma_1,\ldots,\Gamma_k$ of $\Gamma$
in such a way that if $\Gamma_i$ is conjugate to a subgroup of $\Gamma_j$
(or, equivalently, if $\mathcal{L}_{\Gamma_i}\subset \overline{\mathcal{L}_{\Gamma_j}}$),
then $i>j$. In particular, $\Gamma_k=\{1\}$ and $\Gamma_1=\Gamma$. Below we will write
$\mathcal{L}_i$ instead of $\mathcal{L}_{\Gamma_i}$.

We are going to prove the following claim using the increasing induction on $i$:
\begin{itemize}
\item[($*_i$)] For any HC bimodule $\B$, there is the minimal subbimodule $\B_i\subset \B$ such that
$\VA(\B/\B_i)\subset \bigcup_{j\leqslant i}\mathcal{L}_i$. The length
of $\B/\B_i$ is finite.
\end{itemize}

We start with the base, $i=0$. Let $\I_0\subset H_{1,c}$ be the minimal ideal of finite codimension
whose existence was established in Lemma \ref{Lem:RCA_fin}. We claim that $\B_0=\B\I_0$.
Indeed, $\B/\B\I_0$ is a finitely generated right $H_{1,c}/\I_0$-module and hence is
finite dimensional.  Further, $\B\I_0$ has no finite dimensional quotients, this is proved similarly to the
proof of (2) of Theorem \ref{Thm:main_ineq}. The equality $\B_0=\B\I_0$ follows.

Now suppose that $(*_{i-1})$ is proved for all HC bimodules $\B'$. In particular,
all HC bimodules supported on $\bigcup_{j<i}\mathcal{L}_{j}$ have finite length.
Let us prove $(*_i)$.

First of all, let us check that any HC bimodule $\B$ has minimal
subbimodule $\B_i$ such  that $\VA(\B/\B_i)\subset \bigcup_{j\leqslant i}\mathcal{L}_j$.
Consider the HC $H_{1,\underline{c}}(\Gamma_i)$-bimodule $\B_{\dagger,\Gamma_i}$. Let
$\underline{\B}$ be its maximal finite dimensional quotient. Then, for $\B_i$,
we take the kernel of the natural map $\B\rightarrow \underline{\B}^{\dagger,\underline{\Gamma}}$.
It is straightforward to see that $\B_i$ has required properties.

Now let us check that all HC bimodules supported on $\bigcup_{j\leqslant i}\mathcal{L}_{j}$
have finite length. By $(*_{i-1})$, any HC bimodule has the maximal quotient supported
on $\bigcup_{j<i}\mathcal{L}_{j}$. Let $\B^1$  be the kernel of such quotient. Then $\B^1$
has a simple quotient $\underline{\B}^1$ with $\mathcal{L}_{i}\subset \VA(\underline{\B}^1)$.
Let $\hat{\B}^1\subset \B^1$ be the kernel of the projection $\B^1\twoheadrightarrow \underline{\B}^1$.
The bimodule $\hat{\B}^1$ again has the maximal quotient supported on
$\bigcup_{j<i}\mathcal{L}_{j}$. Let $\B^2$ denote the kernel of this quotient.
In this way, we get a sequence of subbimodules
$$\B^0=\B\supset \B^1\supsetneq \B^2\supsetneq\ldots.$$
The sequence cannot have more than $\dim \B_{\dagger,\Gamma_i}$ terms. By the construction
and the inductive assumption, all subsequent quotients $\B^i/\B^{i+1}$ have finite length.

This completes the proof of Theorem \ref{Thm:fin_length} for Rational Cherednik algebras $H_{1,c}$.
The proof for the spherical subalgebras is similar.

\subsection{Case of quantum Hamiltonian reductions}\label{SS_red_fin}
Recall that we prove are going to prove that the $\Weyl(V)\red_\lambda G$-bimodule $\Weyl(V)\red_\lambda G$
has finite length. We will start however by proving that
any holonomic $\Weyl(V)\red_\lambda G$-module has finite length.

Consider the category $\Weyl(V)\operatorname{-mod}^{G,\lambda}$ of all $(G,\lambda)$-equivariant
finitely generated $\Weyl(V)$-modules. Recall that being $(G,\lambda)$-equivariant means
that a module is equipped with a rational $G$-action such that the corresponding
$\g$-action is given by $\xi\mapsto \Phi(\xi)-\langle\lambda,\xi\rangle$. We have a quotient functor $\varphi:
\Weyl(V)\operatorname{-mod}^{G,\lambda}\twoheadrightarrow
\Weyl(V)\red_\lambda G\operatorname{-mod}$ given by taking the
$G$-invariants. The left adjoint (and right inverse) functor
$\varphi^*$ is given by
$$N\mapsto (\Weyl(V)/\Weyl(V)\{\Phi(\xi)-\langle \lambda,\xi\rangle\})\otimes_{\Weyl(V)\red_\lambda G}N.$$
Note that $\VA(\varphi^* N)\subset \pi^{-1}(\VA(N))$, where, recall, $\pi$ stands for the quotient
morphism $\mu^{-1}(0)\twoheadrightarrow V\red_0 G$. The claim that $N$ has finite length will follow
if we check that $\varphi^* N$ has finite length. The latter will follow if we check that
$\varphi^* N$ is a holonomic $\Weyl(V)$-module. And in order to establish that, it is sufficient
to prove the following proposition.

\begin{Prop}\label{Prop:isotr_red}
A subvariety $Y\subset V\red_0 G$ is isotropic if and only if $\pi^{-1}(Y)\subset V$ is isotropic.
\end{Prop}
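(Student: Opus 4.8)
The plan is to analyze the structure of the formal neighborhoods $(V\red_0 G)^{\wedge_x}$ using the local model (\ref{eq:iso_compl}) and reduce the question to the key case $Y = \pi(\pi^{-1}(Y))$ with $Y$ irreducible, then stratify by symplectic leaves. First I would observe that isotropy is a condition checked leaf by leaf, and on the smooth locus of each intersection $Y \cap \mathcal{L}$, it is checked in a formal neighborhood of a point; so the problem is local on $X = V\red_0 G$. Fix a point $x \in (V\red_0 G)_H$ with a representative $v \in \mu^{-1}(0)$ having closed orbit and $G_v = H$, and recall the isomorphism of Poisson formal schemes $(V\red_0 G)^{\wedge_x} \cong (U\red_0 H)^{\wedge_0}$ from (\ref{eq:iso_compl}), where $U = (T_vGv)^{\perp}/T_vGv$. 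On the $V$-side, the analogous Luna slice statement (\ref{eq:Ham_iso_compl}) gives $V^{\wedge_{Gv}} \cong [(T^*G \times U)\red_0 H]^{\wedge_{G/H}}$. Since $\pi^{-1}(Y)^{\wedge_{Gv}}$ and $Y^{\wedge_x}$ correspond under these identifications, and the fibration $T^*G \times U \to U$ (and the induced one after reduction) has symplectic fibers $T^*G \red_0 H \cong \cdots$ transverse to $U$, I would reduce the claim to: a subvariety $Z \subset U\red_0 H$ is isotropic if and only if its preimage in $U$ (under the quotient by $H$, restricted to $\mu_H^{-1}(0)$) is isotropic.

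The next step is the heart of the argument: comparing the symplectic structure on a leaf of $U \red_0 H$ with the structure of the corresponding $H$-orbit stratum in $\mu_H^{-1}(0) \subset U$. A leaf of $U\red_0 H$ has the form $\pi_H(\{u \in \mu_H^{-1}(0) \mid H_u = K, Hu \text{ closed}\})$ for a reductive subgroup $K \subseteq H$; the symplectic form on this leaf is the one descended from the presymplectic form on $\mu_H^{-1}(0)$ (which has kernel exactly the tangent spaces to $H$-orbits, on the relevant stratum). So a subvariety $Z$ of such a leaf is isotropic exactly when its preimage in the corresponding stratum of $\mu_H^{-1}(0)$ is isotropic for the presymplectic form, i.e. isotropic in $U$ in the sense that the symplectic form of $U$ vanishes on its smooth locus — because the extra directions one picks up in the preimage (tangent to $H$-orbits) lie in the kernel of the restriction of $\omega_U$ to $\mu_H^{-1}(0)$, hence are automatically $\omega_U$-orthogonal to everything tangent to $\mu_H^{-1}(0)$. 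This is where the moment-map identity $\mu_H^{-1}(0)^{\perp} = T(Hu)$ (on the closed-orbit locus) and the fact that $T\mu_H^{-1}(0) = (T(Hu))^{\perp}$ get used, so that $\pi_H^{-1}(Z)$ being isotropic in $U$ is equivalent to $Z$ being isotropic in the leaf.

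I would then assemble these local statements: $Y$ is isotropic $\iff$ for every $x$, $Y^{\wedge_x}$ is isotropic in $X^{\wedge_x}$ $\iff$ (via (\ref{eq:iso_compl}) and the fibration splitting off $U$) the corresponding $Z$ is isotropic in $U\red_0 H$ $\iff$ $\pi_H^{-1}(Z)$ is isotropic in $U$ $\iff$ (via (\ref{eq:Ham_iso_compl})) $\pi^{-1}(Y)^{\wedge_{Gv}}$ is isotropic in $V^{\wedge_{Gv}}$, and finally $\pi^{-1}(Y)$ is isotropic $\iff$ this holds at every point of $\mu^{-1}(0)$. The main obstacle I anticipate is bookkeeping the dimensions and the kernels of the various presymplectic forms carefully enough to be sure that passing to the preimage under $\pi$ does not change the isotropy status — concretely, verifying that the $G$-orbit (resp. $H$-orbit) directions that appear in $\pi^{-1}(Y)$ but not in $Y$ are null directions of the ambient symplectic form restricted to $\mu^{-1}(0)$, and that no non-null directions are accidentally introduced. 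The $T^*G$ factor in (\ref{eq:Ham_iso_compl}) handles exactly this: its reduction $T^*G \red_0 H$ contributes the cotangent-bundle–type directions $T^*(G/H)$ together with $\mathfrak{h}$-directions that are killed by the moment map, and one checks these contribute nothing to isotropy either way. Once this is pinned down, the equivalence follows formally.
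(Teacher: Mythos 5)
Your reduction to a formal-local statement via the Luna slice isomorphisms (\ref{eq:Ham_iso_compl}), (\ref{eq:iso_compl}) is sound and matches Step 2 of the paper's proof. But the heart of your argument --- that isotropy of $Z$ in a leaf of $U\red_0 H$ is equivalent to isotropy of its preimage because ``the extra directions one picks up in the preimage (tangent to $H$-orbits) lie in the kernel of the restriction of $\omega_U$ to $\mu_H^{-1}(0)$'' --- has a genuine gap: it only accounts for the part of $\pi_H^{-1}(Z)$ lying in the closed-orbit stratum. The fiber of the GIT quotient $\pi_H\colon \mu_H^{-1}(0)\rightarrow U\red_0 H$ over a point is not a single orbit but the entire null cone of points whose orbit closure meets the closed orbit over that point, and these non-closed orbits contribute directions that are \emph{not} tangent to orbits and are not in the kernel of any presymplectic form. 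For example, for $G=\C^\times$ acting on $\C^2$ with weights $\pm1$, the reduction is a point and $\pi^{-1}(pt)=\{xy=0\}$ is one-dimensional; your orbit-direction argument sees only the origin. (Relatedly, your intermediate reduction ``$Z\subset U\red_0 H$ isotropic iff its preimage in $U$ is isotropic'' is just the proposition again for the pair $(U,H)$, so it is circular unless accompanied by an induction whose essential content is exactly this null-cone issue.)

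The missing ingredient is the paper's Step 1: one must prove directly that $\pi^{-1}(0)\subset V$ (equivalently $\pi_H^{-1}(0)\subset U$) is isotropic. The paper does this by covering the null cone $\tilde\pi^{-1}(0)$ with Kempf--Ness strata $GV_{\gamma,>0}$ attached to finitely many one-parameter subgroups $\gamma$, and checking by hand that $\omega$ vanishes on tangent vectors of the form $\xi_x+v$ with $\xi\in\g$ and $v\in V_{\gamma,>0}\cap\ker d_x\mu$: the pairing $\omega(\xi^1_x,\xi^2_x)$ vanishes since $x\in\mu^{-1}(0)$, $\omega(v^1,v^2)$ vanishes since $V_{\gamma,>0}$ is an isotropic subspace (positive weight spaces pair trivially with each other), and the cross terms vanish since $v^i\in\ker d_x\mu$. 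With this in hand, the formal-local decomposition you set up goes through: the preimage of $Y^{\wedge_y}$ splits as a product of a formal subscheme of $(U^H)^{\wedge_0}$ (the leaf direction, where your argument is fine) with $\pi_H^{-1}(0)\cap (U/U^H)^{\wedge_0}$, and the latter is isotropic by Step 1. Without that step your proof does not close.
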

Note that this proposition can be regarded as one of motivations for our definitions of isotropic
subvarieties and holonomic modules.

\begin{proof}
The proof is in two steps.

{\it Step 1.} We start by proving that $\pi^{-1}(0)$ is an isotropic subvariety in $V$.
Let $\tilde{\pi}:V\rightarrow V\quo G$ be the quotient morphism so that $\pi^{-1}(0)=
\tilde{\pi}^{-1}(0)\cap\mu^{-1}(0)$. Recall that $\tilde{\pi}^{-1}(0)$ admits a
stratification by Kempf-Ness strata, see, e.g., \cite[Section 5.6]{PV}. Namely, for a one-parameter subgroup
$\gamma:\K^\times \rightarrow G$, let $V_{>0,\gamma}$ denote the sum of all
$\gamma$-eigenspaces with characters $t\mapsto t^k, k>0$. Then there are finitely many
one-parametric subgroups $\gamma_1,\ldots,\gamma_s:\K^\times \rightarrow G$ such that
$\tilde{\pi}^{-1}(0)=\bigcup_{i=1}^s GV_{\gamma_i,>0}$. So it is enough to check that
$\mu^{-1}(0)\cap GV_{\gamma,>0}$ is an isotropic subvariety of $V$.
Pick $x\in \mu^{-1}(0)\cap V_{\gamma,>0}$. Any  tangent vector to $\mu^{-1}(0)\cap GV_{\gamma,>0}$
has the form $\xi_x+ v$, where $\xi\in \g$ (and $\xi_x$ denotes the velocity vector
defined by $\xi$) and $v\in V_{\gamma,>0}\cap \ker d_x\mu$. So let us take
$\xi^1,\xi^2\in \g$ and $v^1,v^2\in V_{\gamma,>0}\cap \ker d_x\mu$. We need
to check that
$\omega(\xi^1_x+v^1, \xi^2_x+v^2)=0$. We have $\omega(\xi^1_x,\xi^2_x)=0$ because
$x\in \mu^{-1}(0)$ and $\omega(v^1,v^2)=0$ because $V_{\gamma,>0}$ is an isotropic
subspace. We also have $\omega(\xi^1_x,v^2)=0$ because $v^2\in \ker d_x \mu$.
Similarly, $\omega(v^1,\xi^2_x)=0$. We conclude that $\mu^{-1}(0)\cap GV_{\gamma,>0}$
is an isotropic subvariety. Hence $\pi^{-1}(0)$ is an isotropic subvariety.

{\it Step 2}. Now we claim that $Y\subset V\red_0 G$ is an isotropic subvariety
of some symplectic leaf if and only if $\pi^{-1}(Y)$ is an isotropic subvariety of $V$.
It is enough to prove that, for a smooth point $y\in Y$, the formal neighborhood of $Gv$ in $\pi^{-1}(Y)$
is isotropic if and only if the formal neighborhood $Y^{\wedge_y}$ of $y$ in $Y$ is isotropic
inside the symplectic leaf. Here $Gv$ denotes a unique closed orbit in $\pi^{-1}(y)$. Thanks to the slice theorem
recalled in Section \ref{SS_quant_red}, one needs to prove that $Y^{\wedge_y}$ is isotropic
if and only if the preimage of $Y^{\wedge_y}$ in the formal neighborhood of $G/H$ in $(T^*G\times U)\red_0 H$
is isotropic (we use the notation from that section). This preimage is the bundle over $G/H$ with fiber that is the
preimage of $Y^{\wedge_y}\subset (U\red_0 H)^{\wedge_0}$ in $U^{\wedge_0}$. The latter decomposes into the product
of a formal subscheme in $(U^H)^{\wedge_0}$ and the intersection of
$\pi_H^{-1}(0)$ with $(U/U^H)^{\wedge_0}$. This intersection is isotropic by Step 1.
So the preimage of $Y^{\wedge_y}$ is isotropic if and only if $Y^{\wedge_y}$ is so.
This completes the proof of Step 2.
 \end{proof}

\begin{Cor}\label{Cor:red_fin_length}
The regular $\Weyl(V)\red_\lambda G$-bimodule has finite length.
\end{Cor}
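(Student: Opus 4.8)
The plan is to deduce that the regular $\Weyl(V)\red_\lambda G$-bimodule has finite length from the fact, just established, that every holonomic $\Weyl(V)\red_\lambda G$-module has finite length, by running the same type of argument used in the proof of (2) of Theorem \ref{Thm:main_ineq} and of Theorem \ref{Thm:fin_length} for Rational Cherednik algebras, but now on the level of bimodules rather than modules. The first thing I would do is reduce the statement about bimodules to a statement about one-sided modules over a larger algebra. A $\Weyl(V)\red_\lambda G$-bimodule is the same as a left module over $\A:=\left(\Weyl(V)\red_\lambda G\right)\otimes\left(\Weyl(V)\red_\lambda G\right)^{opp}$, and the latter is a quantum Hamiltonian reduction of $\Weyl(V\oplus V^-)$ (where $V^-$ is $V$ with the opposite symplectic form) by $G\times G$. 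So $\A$ again fits into the framework of Section \ref{SS_gen_alg}, with $\gr\A=\C[(V\red_0 G)\times(V\red_0 G)]$ carrying finitely many symplectic leaves, and the regular bimodule becomes a specific left $\A$-module, call it $\mathcal{M}$.

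Next I would identify the associated variety of $\mathcal{M}$. By the definition of the reduction and the standard estimate $\gr(\Weyl(V)\red_\lambda G)\twoheadleftarrow\C[V\red_0 G]$, the module $\mathcal{M}$ is supported on the diagonal copy of $V\red_0 G$ inside $(V\red_0 G)\times(V\red_0 G)$; in particular $\VA(\mathcal{M})$ has dimension $\dim(V\red_0 G)$, which is exactly half of $\dim\left((V\red_0 G)\times(V\red_0 G)\right)$. I would then check that $\mathcal{M}$ is a holonomic $\A$-module: its associated variety meets each symplectic leaf of $(V\red_0 G)\times(V\red_0 G)$ in (a subvariety of) the diagonal, and the diagonal embedding sends each leaf $\mathcal{L}$ of $V\red_0 G$ isotropically into $\mathcal{L}\times\mathcal{L}^-$ since the symplectic form on the product restricts to $\omega\ominus\omega=0$ on the diagonal. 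Hence $\VA(\mathcal{M})$ is isotropic, i.e. $\mathcal{M}$ is holonomic.

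Finally I would invoke the result proved just above in Section \ref{SS_red_fin} — that every holonomic module over a quantum Hamiltonian reduction has finite length — applied to the algebra $\A$ and the module $\mathcal{M}$. (That result was stated and proved for $\Weyl(V)\red_\lambda G$, but its proof only uses that $\A$ is of the form $\Weyl(W)\red_\mu H$; here $W=V\oplus V^-$, $H=G\times G$, $\mu=(\lambda,-\lambda)$.) This gives that $\mathcal{M}$ has finite length as a left $\A$-module, which is precisely the statement that the regular $\Weyl(V)\red_\lambda G$-bimodule has finite length. The main obstacle, and the step I would be most careful about, is verifying cleanly that the product of two quantum Hamiltonian reductions is again a quantum Hamiltonian reduction compatible with all the structures (filtration, quantum comoment map, the choice of lift factoring through $\mathfrak{sp}$), so that the holonomicity result of Section \ref{SS_red_fin} genuinely applies verbatim; everything else is a routine support computation and an application of an already-proved theorem.
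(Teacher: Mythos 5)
Your proposal is correct and follows essentially the same route as the paper: the paper also identifies $(\Weyl(V)\red_\lambda G)\otimes(\Weyl(V)\red_\lambda G)^{opp}$ with $\Weyl(V\oplus V)\red_{(\lambda,-\lambda)}(G\times G)$ (citing \cite[Lemma 3.14]{BPW} for $(\Weyl(V)\red_\lambda G)^{opp}\cong\Weyl(V)\red_{-\lambda}G$), observes that the regular bimodule is holonomic over this product reduction, and invokes the finite-length result for holonomic modules just established. You merely spell out the support/isotropy verification that the paper leaves implicit.
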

\begin{proof}
By \cite[Lemma 3.14]{BPW}, we have $(\Weyl(V)\red_\lambda G)^{opp}\cong \Weyl(V)\red_{-\lambda} G$.
So $(\Weyl(V)\red_\lambda G)\otimes (\Weyl(V)\red_\lambda G)^{opp}\cong \Weyl(V\oplus V)\red_{(\lambda,-\lambda)}G\times G$. Since $\Weyl(V)\red_\lambda G$ is a holonomic module over the latter algebra, it has finite length.
\end{proof}

\subsection{Case of quantized symplectic resolutions}
The algebraic fundamental group of every leaf in $X$ is finite by \cite{Namikawa_fund}.

Now we are going to show that the $\A_\lambda$-bimodule $\A_\lambda$ has finite length.
Pick a symplectic resolution $\tilde{X}=\tilde{X}^{\chi}$ of $X$. Then $\tilde{X}^\chi\times
\tilde{X}^{-\chi}$ is a symplectic resolution of $X\times X$. We see that, for $N\gg 0$,
localization holds for the parameter $(\lambda+N\chi, -\lambda-N\chi)$. We claim the
$\A_{\lambda+N\chi}$-bimodule $\A_{\lambda+N\chi}$ has finite length. Indeed, the localization
of this module to $\tilde{X}^{\chi}\times \tilde{X}^{-\chi}$ is supported on the preimage of the diagonal
$X_{diag}\subset X$. This preimage is isotropic by Lemma \ref{Lem:isotr_resol} below
and hence $\operatorname{Loc}(\A_{\lambda+N\chi})$ has finite length. Since localization
holds for $(\lambda+N\chi, -\lambda-N\chi)$ and $(\chi,-\chi)$ we conclude that the regular
bimodule $\A_{\lambda+N\chi}$ has finite length.

From here we are going to deduce that $\A_\lambda$ has finite length. Let us order the
symplectic leaves $\mathcal{L}_1,\ldots,\mathcal{L}_k$ of $X$ in such a way that
$\mathcal{L}_i\subset \overline{\mathcal{L}_j}$ implies $i\leqslant j$. We are going to
show that there is a minimal ideal $\I_i\subset \A$ with $\VA(\A/\I_i)\subset \bigcup_{j\leqslant i}
\mathcal{L}_j$ and that the length of $\A/\I_i$ is finite. The base $i=-1$ is vacuous.

Let $\I\subset \A_\lambda$ be a two-sided ideal with $\VA(\A_\lambda/\I)\subset\bigcup_{j\leqslant i}\mathcal{L}_j$.
Consider a complex $\A_{\lambda,N\chi}\otimes^L_{\A_\lambda}\A_\lambda/\I$. Its homology are Harish-Chandra (shortly, HC)
$\A_{\lambda+N\chi}$-$\A_\lambda$-bimodules, see \cite[Section 6.1]{BPW}, annihilated by $\I$ on the right.
Recall the definition of a HC $\A_{\lambda+N\chi}$-$\A_\lambda$-bimodule. This is a bimodule $\B$
that can be equipped with a filtration whose associated graded is a finitely generated $\C[X]$-module.

\begin{Lem}\label{Lem:LR_Ann}
Let $\B$ be a HC $\A_{\lambda'}$-$\A_\lambda$-bimodule with left annihiator $\I'$ and right annihilator $\I$.
Then $\VA(\A_{\lambda'}/\I')=\VA(\B)=\VA(\A_\lambda/\I)$.
\end{Lem}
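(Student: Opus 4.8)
The plan is to prove the two equalities $\VA(\A_{\lambda'}/\I')=\VA(\B)$ and $\VA(\B)=\VA(\A_\lambda/\I)$ simultaneously; by the evident left/right symmetry (passing to opposite algebras, using $\A_\mu^{opp}\cong\A_{-\mu}$) it suffices to treat one of them, say the right-hand one $\VA(\B)=\VA(\A_\lambda/\I)$. First I would recall the basic setup: $\B$ is HC, so it carries a good filtration with $\gr\B$ a finitely generated $\C[X]$-module (for the $\C[X]$-bimodule structure coming from $\A=\gr\A_\lambda=\gr\A_{\lambda'}$), and $\VA(\B)$ is by definition the support of $\gr\B$. The inclusion $\VA(\B)\subset\VA(\A_\lambda/\I)$ is the easy direction: the right action of $\A_\lambda$ on $\B$ factors through $\A_\lambda/\I$, hence on the associated graded level $\gr\B$ is a module over $\gr(\A_\lambda/\I)=A/\gr\I$ (or at least is supported on $\VA(\A_\lambda/\I)$, since $\gr\I\subset$ the ideal of $\VA(\A_\lambda/\I)$ up to radical), giving $\VA(\B)\subset\VA(\A_\lambda/\I)$.

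The substantive direction is $\VA(\A_\lambda/\I)\subset\VA(\B)$, i.e. every point of $\VA(\A_\lambda/\I)$ lies in the support of $\gr\B$. Here I would argue Poisson-geometrically, passing to the Rees algebra $\A_{\lambda,\hbar}$ and completing at a point $x\in\VA(\A_\lambda/\I)$, in the spirit of Section \ref{S_ineq}. The key input is that $\I$ is the \emph{right} annihilator of $\B$, so $\I=\{a\in\A_\lambda\mid \B a=0\}$; dually, over the completed Rees algebra one gets that $\Ann$ of $\B_\hbar^{\wedge_x}$ on the right, pulled back to $\A_{\lambda,\hbar}$, is exactly $\I_\hbar$ (this is the analogue of Corollary \ref{Cor:compln_quot}, using that $\B_\hbar^{\wedge_x}$ is generated over $\A_{\lambda,\hbar}^{\wedge_x}$ by $\B_\hbar$). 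Then one uses the decomposition \eqref{eq:decomp} of $\A_{\lambda,\hbar}^{\wedge_x}$ as $\Weyl_\hbar^{\wedge_0}\widehat\otimes\underline\A_\hbar$, together with the fact that $\VA(\A_\lambda/\I)$ is a union of symplectic leaves (it is Poisson), to run the same argument as in Lemma \ref{Lem:fin_gen} and Proposition \ref{Prop:support}: if $x\in\mathcal L\subset\VA(\A_\lambda/\I)$ were not in $\VA(\B)$, then after completing at $x$ the bimodule $\B_\hbar^{\wedge_x}$ would vanish after inverting $\hbar$, forcing its right annihilator to contain a larger $\hbar$-saturated $\mathsf{eu}$-stable ideal than $\I_\hbar$, and applying the $\bullet^{\dagger,x}$ construction (Proposition \ref{Prop:support}) would give a two-sided ideal strictly larger than $\I$ still annihilating $\B$ on the right — contradicting maximality of $\I$ as the right annihilator.

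The main obstacle I anticipate is making the ``right annihilator'' bookkeeping precise at the level of the completed Rees algebra: one must check that completion and Rees-ification are compatible with taking right annihilators of a \emph{bimodule} (as opposed to a left module, which is what Section \ref{S_ineq} handles), and that the decomposition \eqref{eq:decomp} interacts correctly with the right $\A_\lambda$-module structure — in particular that the central slice algebra $\K[S]\subset\underline\A_\hbar/(\hbar)$ sees the right action in the expected way. Once this is set up, the proof is a formal repetition of the machinery already developed: generic rank / support arguments to get one inclusion, and the $\bullet^{\dagger,x}$ map plus maximality of the annihilator to get the reverse. A cleaner alternative, which I would mention, is to deduce the lemma directly from Proposition \ref{Prop:support} applied to a suitable quotient: since $\B$ is finitely generated as a left $\A_{\lambda'}$-module, picking $x$ a smooth point of an open leaf $\mathcal L$ of $\VA(\B)$ and running \ref{Lem:Euler_stab}--\ref{Prop:support} on $\B$ (viewed as a left module) yields $\VA(\A_{\lambda'}/\I')=\overline{\mathcal L}$; symmetrically $\VA(\A_\lambda/\I)=\overline{\mathcal L}'$ for an open leaf $\mathcal L'$ of $\VA(\B)$, and one checks $\mathcal L=\mathcal L'$ because $\VA(\B)$ has a unique open leaf (it is irreducible, being the associated variety of a bimodule that is, after the completion argument, ``simple-like'' over a leaf) — or, more robustly, one shows $\VA(\B)$ equidimensional of the leaf's dimension and iterates over the finitely many leaves.
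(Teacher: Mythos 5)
The easy inclusion $\VA(\B)\subset\VA(\A_\lambda/\I)$ is fine, but your argument for the substantive inclusion $\VA(\A_\lambda/\I)\subset\VA(\B)$ has a genuine gap. The machinery you invoke from Section \ref{S_ineq} — Lemma \ref{Lem:simplicity}, Corollary \ref{Cor:compln_quot}, Lemma \ref{Lem:Euler_stab}, Proposition \ref{Prop:support} — is developed under the standing hypothesis that the module is \emph{simple}. The crucial step, the analogue of Corollary \ref{Cor:compln_quot} asserting that the annihilator of the completion $\B^{\wedge_x}_{\hbar^{\pm1}}$ pulls back to exactly $\I_\hbar$, is proved in the paper by observing that the simple module $M_{\hbar^{\pm1}}$ embeds into every nonzero quotient of $M^{\wedge_x}_{\hbar^{\pm1}}$ (via the Dixmier lemma and Lemma \ref{Lem:simplicity}); the fact that $\B_\hbar$ generates $\B^{\wedge_x}_\hbar$ only gives the trivial inclusion $\I_\hbar\subset$ preimage. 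For a non-simple HC bimodule this statement is simply false at a point $x\in\VA(\A_\lambda/\I)\setminus\VA(\B)$ (there the completion vanishes after inverting $\hbar$ and the preimage of its annihilator is everything), so the contradiction you want to extract is circular: you would be assuming the very annihilator-preservation that fails. Your ``cleaner alternative'' has the same problem (Theorem \ref{Thm:main_ineq}(1) applies only to simple modules) and additionally assumes $\VA(\B)$ has a unique open leaf, which need not hold for a general HC bimodule.

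The paper's proof is much more elementary and avoids completions entirely: consider $\tilde{\A}:=\operatorname{Hom}_{\A_{\lambda'}}(\B,\B)$, the endomorphisms of $\B$ as a left $\A_{\lambda'}$-module. The right $\A_\lambda$-action gives a natural inclusion $\A_\lambda/\I\hookrightarrow\tilde{\A}$ because $\I$ is precisely the right annihilator. Equipping $\tilde{\A}$ with the filtration induced from a good filtration on $\B$, one gets $\gr\tilde{\A}\subset\operatorname{Hom}_{\C[X]}(\gr\B,\gr\B)$, which is a finitely generated $\C[X]$-module supported on $\VA(\B)$; hence $\tilde{\A}$ is HC with $\VA(\tilde{\A})\subset\VA(\B)$, and therefore $\VA(\A_\lambda/\I)\subset\VA(\B)$. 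The other equality is symmetric. If you want to salvage your approach, this endomorphism-algebra device is the missing idea.
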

\begin{proof}
First of all, it is clear that $\VA(\B)\subset \VA(\A_{\lambda'}/\I'),\VA(\A_\lambda/\I)$.
Let us prove that $\VA(\A_\lambda/\I)\subset \VA(\B)$, the remaining inclusion is analogous.
Consider $\tilde{\A}:=\operatorname{Hom}_{\A_{\lambda'}}(\B,\B)$. This $\A_\lambda$-bimodule
comes with a filtration induced from that on $\B$: by definition, $\tilde{\A}_{\leqslant i}$
consists of all homomorphisms mapping $\B_{\leqslant j}$ to $\B_{\leqslant i+j}$
for all $j$.

Clearly, $\gr\tilde{\A}\subset
\operatorname{Hom}_{\C[X]}(\gr\B,\gr\B)$. The latter is a finitely generated $\C[X]$-module
supported on $\VA(\B)$. So $\tilde{\A}$ is HC and $\VA(\tilde{\A})\subset \VA(\B)$.
The inclusion $\VA(\A/\I)\subset \VA(\B)$ follows from the natural inclusion $\A/\I\subset
\tilde{\A}$.
\end{proof}

Since the $\A_{\lambda+N\chi}$-bimodule $\A_{\lambda+N\chi}$ has finite length, it follows that
there is a two-sided ideal $\I'\subset \A_{\lambda+N\chi}$ with $\VA(\A_{\lambda+N\chi}/\I')\subset\bigcup_{j\leqslant i}\mathcal{L}_j$ annihilating (from the left) all bimodules $\operatorname{Tor}^p_{\A_\lambda}(\A_{\lambda,N\chi}, \A_\lambda/\I)$.
Let $\underline{\I}$ denote the right annihilator of $\A_{\lambda,N\chi}/\I'\A_{\lambda,N\chi}$.
This is an ideal in $\A_\lambda$ with $\VA(\A_\lambda/\underline{\I})\subset \bigcup_{j\leqslant i}\mathcal{L}_j$
by Lemma \ref{Lem:LR_Ann}. Moreover, $\underline{\I}$ annihilates (from the right) all bimodules
\begin{align*}&\operatorname{Ext}^q_{\A_{\lambda+n\chi}}(\A_{\lambda,N\chi}, \operatorname{Tor}^p_{\A_\lambda}(\A_{\lambda,N\chi}, \A_\lambda/\I))=\\&=\operatorname{Ext}^q_{\A_{\lambda+n\chi}/\I'}(\A_{\lambda+N\chi}/\I'\otimes^L_{\A_{\lambda+N\chi}}\A_{\lambda,N\chi}, \operatorname{Tor}^p_{\A_\lambda}(\A_{\lambda,N\chi}, \A_\lambda/\I)).\end{align*}
But \begin{equation}\label{eq:RHom_LLoc} R\operatorname{Hom}_{\A_{\lambda+N\chi}}(\A_{\lambda, N\chi}, \A_{\lambda,N\chi}\otimes^L_{\A_\lambda}\A_\lambda/\I)=\A_\lambda/\I.\end{equation}
Indeed, by Lemma \ref{Lem:loc_fun}, the left hand side is $R\Gamma_\lambda(L\Loc_\lambda(\A_\lambda/\I))$,
and the latter is isomorphic to $\A_\lambda/\I$. Using (\ref{eq:RHom_LLoc}) together with
the standard spectral sequence for a composition of derived functors,
we see that $\underline{\I}$ annihilates $\A_\lambda/\I$ from the left.
So $\underline{\I}\subset \I$ and hence $\underline{\I}$ is the minimal ideal in $\A_\lambda$ with
$\VA(\A/\underline{\I})=\bigcup_{j\leqslant i}\mathcal{L}_j$.

The proof that  every HC $\A_\lambda$-bimodule $\B$ has finite length is now similar to the RCA
case considered in Section \ref{SS_RCA_fin}. Namely, we prove this statement by induction on $i$, where $\VA(\B)\subset
\bigcup_{j\leqslant i}\mathcal{L}_j$. Let $\I_i\subset \A_\lambda$ be the minimal
ideal with $\VA(\A_\lambda/\I_i)\subset \bigcup_{j\leqslant i}\mathcal{L}_j$.
Then, similarly to Section \ref{SS_RCA_fin}, $\B/\I_i\B$ is the maximal quotient of $\B$ such that $\VA(\B/\I_i\B)\subset
\bigcup_{j\leqslant i}\mathcal{L}_j$. The bimodule $\I_{i-1}\B/\I_i\B$ has a simple
quotient whose associated variety is not contained in $\bigcup_{j<i}\mathcal{L}_j$.
Then we can form a filtration on $\B/\I_i\B$ similarly to the RCA case. This filtration
is finite: the number of terms is bounded by the multiplicity of $\B/\I_i\B$ on $\mathcal{L}_i$.
The successive quotients all have finite length by the inductive assumption.

\begin{Rem}\label{Rem:SRA}
The spherical subalgebra $eH_{1,c}e$ for a group of the form $\Gamma_n$ is an algebra of the form
$\A_\lambda$ (\cite[Section 6]{quant}). In fact, the $H_{1,c}$-bimodule $H_{1,c}$ has finite length.
This follows from the observation that $H_{1,c}$ is the algebra of global sections
of the endomorphism algebra of a suitable locally free right $\mathcal{D}_\lambda$-module
(a quantized Procesi bundle). The arguments above can be repeated for $H_{1,c}$.

There are symplectic reflection groups that are neither $\Gamma_n$ nor come from
complex reflection groups. For them we do not know whether the $H_{1,c}$-bimodule
$H_{1,c}$ has finite length. Clearly, using the argument from Section \ref{SS_RCA_fin},
this can be reduced to showing that $H_{1,c}$ has the minimal ideal of finite codimension.

The question of whether the generalized Bernstein inequality holds for Symplectic reflection
algebras was asked by Etingof and Ginzburg, \cite{EG_problems}, and also by Gordon in \cite{Gordon_review}.
The problem is now completely settled for the two nice classes
of symplectic reflection groups. In the general case, it is settled in the case when
$H_{1,c}$ is simple (obviously, the regular bimodule has finite length in this case).
Recall, \cite[Theorem 1.4.2, Section 4.2]{sraco} that the algebra $H_{1,c}$
is simple outside of countably many hyperplanes.
\end{Rem}

\section{Appendix 1 by P. Etingof and I. Losev}\label{S_App}
In this section we deal with algebras of the form $\A_\lambda$, see Section \ref{SS_quant_resol}.
Our goal is to compare isotropic subvarieties in $X$ and in $\tilde{X}$ and also holonomic
modules for $\A_\lambda$ and for $\mathcal{D}_\lambda$. See Section \ref{SS_quant_resol} for the notation.
\subsection{Isotropic subvarieties}
Let $M$ be a symplectic variety and $Z$ its  subvariety.
It is shown in \cite[Prop. 1.3.30]{Chriss_Ginzburg} that $Z$ is isotropic if and only if, for any smooth locally closed subvariety $W\subset Z$, one has $\omega|_W=0$. This implies that any locally closed subvariety of an isotropic subvariety is isotropic. Also, the closure of an isotropic subvariety and a finite union of closed isotropic subvarieties is isotropic.

Now suppose that $\rho: \tilde{X}\rightarrow X$ is a symplectic resolution of singularities of an
irreducible Poisson variety $X$, and let $\omega$ be the symplectic form on $\tilde{X}$.
Let $\mathcal{L}_i$ be the symplectic leaves of $X$ (recall that there are finitely many), and let $Y_i$ be their preimages in $\tilde{X}$. It is well known that the pullback of the symplectic form on $\mathcal{L}_i$
to a locally closed subvariety $W\subset Y_i$ is the restriction of $\omega$ to $W$.

\begin{Lem}\label{Lem:isotr_resol} (i) If $Z\subset X$ is isotropic, then $\rho(Z)\cap \mathcal{L}_i$ is isotropic for all $i$.

(ii) If $L\subset \mathcal{L}_i$ is isotropic, then $\rho^{-1}(L)\subset \tilde{X}$ is isotropic.
\end{Lem}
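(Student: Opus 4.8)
The plan is to deduce both parts from three inputs: the characterization recalled just above (Chriss--Ginzburg), that a subvariety of a symplectic variety is isotropic precisely when $\omega$ restricts to $0$ on every smooth locally closed subvariety of it; the compatibility recalled above, that for a smooth locally closed $W\subset Y_i$ one has $\omega|_W=(\rho|_W)^*\omega_i$, where $\omega_i$ denotes the symplectic form of the leaf $\mathcal{L}_i$; and generic smoothness in characteristic $0$, which lets one shrink the source of a dominant morphism of varieties to a dense open subvariety on which it becomes a smooth morphism onto an open dense subvariety of its image. In both parts it suffices to treat \emph{irreducible} smooth locally closed test subvarieties, since the general case follows by passing to connected components.

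For (ii) I would argue as follows. Let $L\subset\mathcal{L}_i$ be isotropic; note $\rho^{-1}(L)\subset Y_i$. Given an irreducible smooth locally closed $W\subset\rho^{-1}(L)$, let $S_0$ be the closure of $\rho(W)$ in $\mathcal{L}_i$, intersect $W$ with the preimage of the smooth locus of $S_0$, and then apply generic smoothness to replace $W$ by a dense open subvariety on which $\rho$ is a smooth morphism onto a dense open subvariety $S$ of that smooth locus. Then $S$ is a smooth locally closed subvariety of $\mathcal{L}_i$ contained in $L$, so $\omega_i|_S=0$ by hypothesis; by the compatibility statement $\omega|_W=(\rho|_W)^*\omega_i$, which vanishes because $\rho|_W$ is a submersion onto $S$. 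Since $W$ was shrunk only to a dense open and $\omega|_W$ is algebraic, $\omega$ vanishes on the original $W$ as well, proving $\rho^{-1}(L)$ isotropic.

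For (i) the argument is symmetric. Let $Z\subset\tilde{X}$ be isotropic and let $W'\subset\rho(Z)\cap\mathcal{L}_i$ be irreducible smooth locally closed. Put $\tilde Z:=Z\cap\rho^{-1}(W')$, a locally closed subvariety of $Z$ contained in $Y_i$; since $W'\subset\rho(Z)$, the map $p:=\rho|_{\tilde Z}\colon\tilde Z\to W'$ is surjective. Choosing a component of the smooth locus of $\tilde Z$ that dominates $W'$ and applying generic smoothness, I may pass to a dense open $\tilde Z^\circ$ on which $p$ is a smooth morphism, hence a submersion, onto a dense open subvariety of $W'$. Now $\tilde Z^\circ$ is a smooth locally closed subvariety of the isotropic $Z$, so $\omega|_{\tilde Z^\circ}=0$; by the compatibility statement this form equals $p^*(\omega_i|_{W'})$ on $\tilde Z^\circ$, and surjectivity of $dp$ onto the tangent spaces of $W'$ along $p(\tilde Z^\circ)$ forces $\omega_i$ to vanish on that dense open subset of $W'$, hence on all of $W'$.

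The one delicate point, and the place where characteristic $0$ is really used, is this reduction to a submersion between smooth varieties: the test subvarieties handed to us by the definition of ``isotropic'' are arbitrary, while $Y_i$ and the fibres of $\rho$ can be singular, so some care with closures, smooth loci and irreducible components is needed before generic smoothness can be invoked. Once that reduction is in place, both statements are an immediate consequence of the identity $\omega|_W=(\rho|_W)^*\omega_i$.
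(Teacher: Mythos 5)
Your proof is correct and follows essentially the same route as the paper's: both parts come down to the identity $\omega|_W=(\rho|_W)^*\omega_{\mathcal{L}_i}$ on subvarieties of $Y_i$ together with surjectivity of $d\rho$ at a generic (smooth) point of the relevant test subvariety. The paper's version is just terser — it picks a generic smooth point and transports tangent vectors directly rather than spelling out the generic-smoothness reduction — but the content is the same.
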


\begin{proof} (i) We have $\rho(Z)\cap \mathcal{L}_i=\rho(Z\cap Y_i)$. So without loss of generality
we may assume that $Z\subset Y_i$, and we need to show that $\rho(Z)$ is isotropic
(note that $\rho(Z)$ is a locally closed subvariety of $Y$ since $\rho$ is proper).
Again without loss of generality we may assume that $Z$ is irreducible.
Let $z\in Z$ be a generic smooth point.
The map $d\rho_z: T_zZ\rightarrow T_{\rho(z)}\rho(Z)$ is surjective.
So, for any two tangent vectors $v_1,v_2\in T_{\rho(z)}\rho(Z)$, there are $w_1,w_2\in T_zZ$ such that $d\rho_z(w_i)=v_i$.
Then $\omega_{\mathcal{L}_i}(v_1,v_2)=\omega(w_1,w_2)=0$, since $w_1,w_2$ are tangent to $Z$ and $Z$ is isotropic.
Thus, $\rho(Z)$ is isotropic near a generic point and hence isotropic.

(ii) Let $L\subset \rho^{-1}(\mathcal{L}_i)$ be a smooth locally closed subvariety, and $w_1,w_2$ two tangent vectors to it at a point $z$. Then $\omega_{\mathcal{L}_i}(d\rho_z(w_1),d\rho_z(w_2))=\omega(w_1,w_2)=0$, the last equality
holds because $L$ is isotropic. This implies that $\rho^{-1}(L)$ is isotropic.
\end{proof}

\subsection{Holonomic modules and localization}
Recall that to a coherent $\mathcal{D}_\lambda$-module $\mathcal{M}$ one can assign its support
$\VA(\mathcal{M})$ that, by definition, is the support of $\gr\mathcal{M}$, where the associated
graded is taken with respect to an arbitrary good filtration. We say that $\mathcal{M}$
is holonomic if $\VA(\mathcal{M})$ is lagrangian (it is always coisotropic by the Gabber involutivity theorem,
\cite{Ga}).

The following lemma justifies the definition of a holonomic $\A_\lambda$-module in the case when $X$
admits a conical symplectic resolution of singularities, $\tilde{X}$.

\begin{Lem}\label{Lem:holon_local}
(i) Let $M$ be a holonomic $\A_\lambda$-module. Then all the homology of $L\Loc_\lambda(M)$
are holonomic $\mathcal{D}_\lambda$-modules.

(ii) Let $\mathcal{M}$ be a holonomic $\mathcal{D}_\lambda$-module. Then all the homology
of $R\Gamma_\lambda(M)$ are holonomic $\A_\lambda$-modules.
\end{Lem}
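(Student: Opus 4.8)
The plan is to reduce everything to a statement about associated varieties (supports) and then invoke the key fact, proved in Lemma \ref{Lem:isotr_resol}, that $\rho$ and $\rho^{-1}$ preserve the isotropy condition. Recall that $\Loc_\lambda$ and $\Gamma_\lambda$ are adjoint functors between $\A_\lambda\operatorname{-mod}$ and $\operatorname{Coh}(\mathcal{D}_\lambda)$, and by Lemma \ref{Lem:loc_fun} their derived versions are computed by tensoring with, respectively taking $R\Hom$ out of, the bimodule $\A_{\lambda,N\chi}$ (after twisting). So I expect to freely use that the homology of $L\Loc_\lambda(M)$ and of $R\Gamma_\lambda(\mathcal{M})$ are coherent $\mathcal{D}_\lambda$-modules, respectively finitely generated $\A_\lambda$-modules; the content is entirely in controlling the supports.

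For part (i): let $M$ be holonomic, so $\VA(M)\subset X$ is isotropic, meaning its intersection with each symplectic leaf $\mathcal{L}_i$ is isotropic. First I would show that $\VA(H_j(L\Loc_\lambda(M)))\subset \rho^{-1}(\VA(M))$ for every $j$. This is the standard compatibility of singular support with the localization functor: choosing a good filtration on $M$, the complex $L\Loc_\lambda(M)$ is computed by a Koszul-type resolution, and $\gr$ of the resulting $\mathcal{D}_\lambda$-complex is supported on $\rho^{-1}(\operatorname{Supp}\gr M)=\rho^{-1}(\VA(M))$; passing to homology only shrinks the support. Then by Lemma \ref{Lem:isotr_resol}(ii), applied leaf by leaf (write $\VA(M)=\bigcup_i (\VA(M)\cap \mathcal{L}_i)$, a finite union of isotropic subvarieties, and use that a finite union of closed isotropics is isotropic and that $\rho^{-1}$ commutes with unions), $\rho^{-1}(\VA(M))$ is an isotropic — hence lagrangian, being also coisotropic by Gabber — subvariety of $\tilde X$. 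Therefore each $H_j(L\Loc_\lambda(M))$ has isotropic singular support, i.e.\ is holonomic.

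For part (ii): let $\mathcal{M}$ be holonomic, so $\VA(\mathcal{M})\subset \tilde X$ is lagrangian. I would first establish $\VA(H_j(R\Gamma_\lambda(\mathcal{M})))\subset \rho(\VA(\mathcal{M}))$ for all $j$. Here the cleanest route is via Lemma \ref{Lem:loc_fun}: $R\Gamma_\lambda(\mathcal{M})=R\Hom_{\A_{\lambda+N\chi}}(\A_{\lambda,N\chi},\mathcal{M})$ (identifying $\operatorname{Coh}(\mathcal{D}_\lambda)$ with $\A_{\lambda+N\chi}\operatorname{-mod}$), and the associated variety of $R\Hom$ out of a HC bimodule whose support maps to $X_{diag}$ is contained in the image under $\rho$ of the support of $\mathcal{M}$ — this is an argument with filtered/graded $\Ext$ of the same flavour as Lemma \ref{Lem:LR_Ann}. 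Then by Lemma \ref{Lem:isotr_resol}(i), $\rho(\VA(\mathcal{M}))\cap \mathcal{L}_i$ is isotropic for every leaf $\mathcal{L}_i$, so $\rho(\VA(\mathcal{M}))$ is an isotropic subvariety of $X$; hence each $H_j(R\Gamma_\lambda(\mathcal{M}))$ is holonomic as an $\A_\lambda$-module.

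The main obstacle I anticipate is the support-comparison step, i.e.\ proving that the singular support of the homology of the derived localization (resp.\ derived sections) is contained in $\rho^{-1}$ (resp.\ $\rho$) of the original support. For $L\Loc_\lambda$ one must be careful that $\mathcal{D}_{\lambda,N\chi}$ is a quantization of the ample line bundle $\mathcal{O}_{N\chi}$, so tensoring does not change supports at the level of associated graded (the classical limit is $\otimes_{\C[X]}\C[\tilde X]$ twisted by a line bundle, which is supported on $\rho^{-1}$ of the original support); tracking this through a possibly unbounded derived functor and through the identification of categories requires some bookkeeping but no new idea. For $R\Gamma_\lambda$ the delicate point is the same $\Ext$-support estimate as in the proof of Lemma \ref{Lem:LR_Ann}, now in the sheaf setting; again it is a matter of choosing compatible good filtrations. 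Once these containments are in hand, Lemma \ref{Lem:isotr_resol} does all the geometric work and the conclusion is immediate.
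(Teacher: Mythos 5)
Your proposal is correct and follows essentially the same route as the paper: bound the supports of the homology of $L\Loc_\lambda(M)$ (resp.\ $R\Gamma_\lambda(\mathcal{M})$) by $\rho^{-1}(\VA(M))$ (resp.\ $\rho(\VA(\mathcal{M}))$) and then apply Lemma \ref{Lem:isotr_resol}; the paper simply declares the support containments ``easy to see'' where you supply the filtered/graded bookkeeping.
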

\begin{proof}
(i) It is easy to see that the support of any module $H_i(L\Loc_\lambda(M))$
lies in $\rho^{-1}(\VA(M))$, in particular, by (i) of  Lemma \ref{Lem:isotr_resol}, is isotropic.
So $H_i(L\Loc_\lambda(M))$ is holonomic.

(ii) is proved similarly to (i), once we observe that the homology of $R\Gamma_\lambda(\mathcal{M})$
are supported on $\rho(\VA(\mathcal{M}))$.
\end{proof}

\end{document}